    \newcommand{\BA}{{\mathbb {A}}} 
    \newcommand{\BC}{{\mathbb {C}}} 
     \newcommand{\BF}{{\mathbb {F}}}
    \newcommand{\BQ}{{\mathbb {Q}}} \newcommand{\BR}{{\mathbb {R}}}
     \newcommand{\BZ}{{\mathbb {Z}}}
    \newcommand{\CA}{{\mathcal {A}}}
     \newcommand{\CH}{{\mathcal {H}}}
    \newcommand{\CO}{{\mathcal {O}}}
     \newcommand{\RN}{{\mathrm {N}}}
    \newcommand{\Gal}{{\mathrm{Gal}}} \newcommand{\GL}{{\mathrm{GL}}}
    \newcommand{\ord}{{\mathrm{ord}}} \newcommand{\rank}{{\mathrm{rank}}}
    \renewcommand{\mod}{\ \mathrm{mod}\ }
    \newcommand{\sign}{{\mathrm{sign}}}
    \newcommand{\tor}{{\mathrm{tor}}}
    \newcommand{\RTr}{{\mathrm{Tr}}}
    \newcommand{\Vol}{{\mathrm{Vol}}}
    \font\cyr=wncyr10
    \newcommand{\Sha}{\hbox{\cyr X}}
    \newcommand{\wh}{\widehat}
    \newcommand{\pair}[1]{\langle {#1} \rangle}
    \newcommand{\ov}{\overline}
    \newcommand{\lra}{\longrightarrow}
    \newcommand{\ra}{\rightarrow} 
    \newcommand{\bs}{\backslash}
    \newcommand{\nequiv}{\equiv\hspace{-10pt}/\ }
\newcommand{\LL}[1]{L^{(alg)}(A^{(#1)}, 1)}
    \theoremstyle{plain}
    \newtheorem{thm}{Theorem}[section] \newtheorem{cor}[thm]{Corollary}
    \newtheorem{lem}[thm]{Lemma}  \newtheorem{prop}[thm]{Proposition}
     \newtheorem{defn}[thm]{Definition}
\theoremstyle{remark} 
\theoremstyle{remark} 
\theoremstyle{remark} 
    \newcommand{\Neron}{N\'{e}ron~}
    \newcommand{\idele}{id\'{e}le~}
    \numberwithin{equation}{section}
\begin{document}

\title{Quadratic Twists of elliptic curves}

\author{John Coates, Yongxiong Li, Ye Tian, and Shuai Zhai}

\thanks{John Coates was supported by the Basic Science Research Program through the
National Research Foundation of Korea(NRF), funded by the Ministry of
Education(2013053914).}

\thanks{Ye Tian was supported by NSFC grants 11325106 and 11031004, and the 973 Program 2013CB834202.}

\subjclass[2010]{11G05, 11G40.}

\dedicatory{\ \ \ \ \ \ \ \ \ \ \ \ \ \ \ \ \ \ \ \ \ \ \ \ \ \ \ \ \ \ \ \ \ \ \ \ \ \ \ \ \ \ \ \ \ \ \ \ \ \ \ \ \ \ \ \ \ \ \ \ \ \ \ \ \ \ \ \it For Bryan Birch and Peter Swinnerton-Dyer}

\begin{abstract}
The paper generalizes, for a wide class of elliptic curves defined over $\BQ$, the celebrated classical lemma of Birch and Heegner to quadratic twists by discriminants having any prescribed number of prime factors. In addition, it proves stronger results for the family of quadratic twists of the modular elliptic curve $X_0(49)$, including showing that there is a large class of explicit quadratic twists whose complex $L$-series does not vanish at $s=1$, and for which the full Birch-Swinnerton-Dyer conjecture holds.
\end{abstract}

\maketitle

\tableofcontents

\section{Introduction}
Let $E$ be an elliptic curve defined over $\BQ$, and let $L(E,s)$ be
the complex $L$-series of $E$. For each square free non-zero integer
$d \neq 1$, we write $E^{(d)}$ for the twist of $E$ by the quadratic
extension $\BQ(\sqrt{d})/\BQ$, and $L(E^{(d)},s)$ for its complex
$L$-series. Those $d$ for which $L(E^{(d)},s)$ has a zero at $s=1$ of order at most 1 are particularly interesting because, for such $d$,
we know that the rank of $E^{(d)}(\BQ)$ is equal to the order of this zero at $s=1$, and the Tate-Shafarevich group of $E^{(d)}$ is finite, by the work of Gross-Zagier and Kolyvagin.
It has been conjectured by Goldfeld that, amongst those
$d$ such that $L(E^{(d)},s)$ has root number $+1$, one should have
$L(E^{(d)}, 1) \neq 0$ outside a set of density zero, and
similarly, amongst those $d$ such that $L(E^{(d)},s)$ has root
number $-1$, one should have that $L(E^{(d)}, s)$ has a simple zero
at $s=1$ outside a set of density zero. Little is known about this
phenomenon at present, beyond the classical results of
\cite{Waldspurger}, \cite{BFH}, \cite{RKM} proving that there are
infinitely many $d$ such that $L(E^{(d)}, s)$ does not vanish at
$s=1$, and infinitely many $d$ such that $L(E^{(d)},s)$ has a simple
zero at $s=1$. The aim of the present paper is to make a modest
first step in developing  techniques, which are largely inspired by
the work of one of us \cite{Tian}, \cite{Tian2} for the elliptic
curve $E: y^2 = x^3 - x$, to prove further results in this
direction. Let $C_E$, or simply $C$
when there is no danger of confusion,
denote the conductor of $E$.  As usual, $\Gamma_0(C)$ will denote the
subgroup of $SL_2(\mathbb{Z})$ consisting of all matrices with the
bottom left hand corner entry divisible by $C$, and we write
$X_0(C)$ for the corresponding compactified modular curve. By the theorem of
Wiles for $E$ semistable, and its generalization to all $E$ by Breuil-Conrad-Diamond-Taylor, there is a non-constant rational map
\begin{equation}\label{mp}
f: X_0(C) \to E
\end{equation}
defined over $\BQ$, which we will always assume maps the cusp at infinity, which we denote
by $[\infty]$, to the zero element $O$ of $E$. Write $[0]$ for
the cusp of the zero point in the complex plane, so that $f([0])$ is
a torsion point in $E(\BQ)$ by the theorem of Manin-Drinfeld.
By generalizing an idea going back to Heegner and Birch, we prove the following fairly general result.

\begin{thm}\label{bh} Let $E$ be an elliptic curve over $\BQ$ of conductor
$C = C_E$,  and let $f: X_0(C)\ra E$ be a modular parametrization as in \eqref{mp}. Assume that
\begin{enumerate}
\item $f([0]) \notin 2E(\BQ)$,
\item there is a good supersingular prime $q_1$ for $E$, with $q_1
\equiv 1 \mod 4$, and $C$ a square modulo $q_1$.
\end{enumerate}
If $k$ is any integer $\geq 1$, there are infinitely many square
free integers $M$, having exactly $k$ prime factors, such that $L(E^{(M)}, s)$
has a zero at $s=1$ of order 1. Similarly, if $k$ is any integer $\geq 2$,
there are infinitely many square free integers $M$ having exactly $k$ prime
factors such that $L(E^{(M)},s)$ does not vanish at $s=1$.
\end{thm}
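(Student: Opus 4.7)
The plan is to extend the classical Birch--Heegner lemma along the lines of Tian's work cited in the introduction, using the modular parametrization \eqref{mp}, Heegner points, Gross--Zagier, and Kolyvagin. For each target $k$, I aim to exhibit infinitely many imaginary quadratic fields $K = \BQ(\sqrt{D})$, with $D < 0$ built from $-q_1$ together with (some of) the prime factors of the desired $M$, arranged so that (i) every prime dividing $C$ splits in $K$ (the Heegner hypothesis for $E$), and (ii) the factorization $L(E/K,s) = L(E,s) L(E^{(D_0)},s)$ isolates the twist of interest. Standard Chebotarev--Dirichlet arguments produce sufficiently many such $K$; the substance of the proof is to show that for these $K$ the Heegner trace $P_K \in E(K)$ is not torsion, which, via Gross--Zagier and Kolyvagin, forces $\mathrm{ord}_{s=1} L(E/K,s) = 1$ and hence yields a $k$-prime quadratic twist of $E$ with the required order of vanishing at $s=1$.

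To show $P_K \notin E(K)_{\tor}$, I would carry out a $2$-descent: compute the image of $P_K$ in $E(K)/2E(K)$ and show, under the hypotheses of the theorem, that it equals an odd integer multiple of the class of $f([0])$, which is nonzero by hypothesis (1). This is the generalization of the classical Birch congruence promised in the introduction. The tool is the theory of supersingular reduction of CM divisors on $X_0(C)$: after reducing $P_K$ modulo a prime above $q_1$, one uses the fact that every supersingular point of $X_0(C)(\overline{\BF}_{q_1})$ is defined over $\BF_{q_1^2}$ to write the reduction of $P_K$ explicitly as a $\BZ$-linear combination of images of cusps (in particular $f([0])$), and then to track the parity of the coefficient of $f([0])$.

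Hypothesis (2) enters at two points. The congruence $q_1 \equiv 1 \pmod 4$ is chosen so that twisting by $q_1$ flips local root numbers in a controlled way, allowing one to toggle between the rank-zero and rank-one conclusions and thus to handle both the $k \geq 1$ and $k \geq 2$ statements within a single framework. The condition that $C$ is a square modulo $q_1$ ensures that the Atkin--Lehner involutions $w_p$ for $p \mid C$ act compatibly with the supersingular reduction, keeping the combinatorics of the above descent tractable; in particular, it is what guarantees that no even factor is introduced and that the coefficient of $f([0])$ in the supersingular reduction formula is odd.

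Once the descent congruence is established for a suitable seed field $K$, the theorem follows by induction on the number of prime factors of $M$. In the inductive step one adjoins an additional prime $\ell$, chosen via Chebotarev to have prescribed splitting behaviour in $K$ and in all previously used fields, and to lie in a fixed congruence class modulo $8 q_1 C$; the descent congruence then transfers to the enlarged field, toggling the root-number parity from one step to the next, and two such steps cover both conclusions. The hard part, and where most of the real work of the paper must lie, is the explicit descent congruence of the second paragraph: pinning down $P_K$ modulo $2$ as an odd multiple of $f([0])$, uniformly enough to survive the induction and the variation in $K$.
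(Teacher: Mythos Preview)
Your high-level framework is right: the proof goes through Heegner points, an explicit Gross--Zagier formula, and Kolyvagin, and the crux is a $2$-divisibility statement for a twisted Heegner trace that ultimately rests on $f([0])\notin 2E(\BQ)$. But the mechanism you propose for extracting that $2$-divisibility is not the one the paper uses, and as stated it has gaps.

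First, the imaginary quadratic field $K$ is not built from $q_1$. One fixes an auxiliary prime $\ell_0>3$ with $\ell_0\equiv 3\pmod 4$, every prime dividing $C$ split in $K=\BQ(\sqrt{-\ell_0})$, and $q_1$ \emph{inert} in $K$. The twist that acquires rank one is $E^{(-\ell_0 R)}$ with $R=q_1q_2\cdots q_r$; the twist with $L(E^{(R)},1)\neq 0$ is the complementary factor of $L(E/K,\chi_R,s)$ for the ring-class character $\chi_R$ cutting out $K(\sqrt{R})$. There is no toggling of root numbers step by step: the sign of $L(E^{(R)},s)$ is $+1$ and that of $L(E^{(-\ell_0 R)},s)$ is $-1$ for every $r$.

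Second, the supersingular hypothesis on $q_1$ is not used via reduction of CM points on $X_0(C)$. Its sole purpose is that $a_{q_1}=0$, so Kolyvagin's trace relation $\mathrm{Tr}_{H_{Mq_1}/H_M}f(P_{Mq_1})=a_{q_1}f(P_M)$ kills the relevant partial traces. One then chooses the further primes $q_2,\ldots,q_r$ by Chebotarev so that $a_{q_i}\equiv 0\pmod{2^r}$; this is where $q_1\equiv 1\pmod 4$ and ``$C$ a square mod $q_1$'' enter, since they pin down the Frobenius conjugacy class in $\mathrm{Gal}\bigl(K(\sqrt{C},E[2^r])/\BQ\bigr)$ that the $q_i$ must hit. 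The condition that $C$ be a square modulo each $q_i$ is then used, not for Atkin--Lehner compatibility under reduction, but so that the Artin symbol $\sigma_{\mathfrak C}$ fixes each $\sqrt{q_i}$; this is exactly what gives $w_C\mathfrak P(\mathfrak H_R)=\tau\mathfrak P(\mathfrak H_R)$ and hence the key identity $\overline{\psi}_R+\psi_R=f([0])$ in characteristic zero.

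Finally, your proposed descent step---writing the supersingular reduction of $P_K$ as a $\BZ$-linear combination of images of cusps---does not make sense as written: cusps do not reduce to supersingular points, and there is no evident map from the supersingular locus to $E(\BQ)/2E(\BQ)$ picking out $f([0])$. The paper's argument stays entirely in characteristic zero: from $\sum_{D\mid R}z_D=2^r\psi_R$ and the vanishing/divisibility of the $z_D$ one gets $y_R=2^r u_R$ in $E(\mathfrak H_R)$, and the relation $\overline{\psi}_R+\psi_R=f([0])\notin 2E(\BQ)$ is what obstructs $y_R$ from lying in $2^rE(\BQ(\sqrt{-\ell_0 R}))^-$ modulo torsion. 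That obstruction, not any mod-$q_1$ computation, is the heart of the proof.
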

\noindent A necessary condition for the existence of such a good supersingular
prime $q_1$ for $E$ is that the $2$-primary subgroup of $E(\BQ)$ should
have order at most $2$. Here are some examples of curves to which this
theorem applies. Take $E = X_0(14)$ with $q_1 =5$, and $E = X_0(49)$
with $q_1$ any prime which is $\equiv 1 \mod 4$, and which is not a
square modulo $7$. Examples where  the modular parametrization map $f$
is not an isomorphism are given by the two curves with equations
\begin{equation}\label{cr}
y^2 + xy + y = x^3 - x - 1, \, \, \textrm{and} \, \, \, \,  y^2 = x^3 - x^2 - x - 2,
\end{equation}
which have conductors $C=69$ and $C=84$, respectively; one can take
$q_1 = 5$ for the first curve, and $q_1=41$ or $q_1=89$ for the
second curve. Both curves have Mordell-Weil group equal to $\BZ/2\BZ$,
and the value at $s=1$ of the complex $L$-series of each curve is
equal to $\omega(E)/2$, where $\omega(E)$ denotes the least positive real
period of the \Neron differential on the curve. We are very grateful
to John Cremona for pointing out to us why condition (1) of Theorem
\ref{bh} is valid for these last two curves.

\medskip

We quickly recall the conjectural exact Birch-Swinnerton-Dyer formula for any elliptic curve $E$
over $\BQ$ with $L(E, 1) \neq 0$. For such curves $E$, the theorem of Kolyvagin tells us that
both $E(\BQ)$ and the Tate-Shafarevich group $\Sha(E)$ are finite. Let $\omega(E)$ denote
the least positive real period of a \Neron differential on $E$, so that $L(E,1)/\omega(E)$ is a non-zero
rational number. Let $c_\infty(E)$ denote the number of connected components of $E(\BR)$, and
for each prime $q$ dividing $C$, let $c_q(E) = [E(\BQ_q): E_0(\BQ_q)]$, where $\BQ_q$ is the $q$-adic completion
of $\BQ$, and $E_0(\BQ_q)$ is the subgroup of points with non-singular reduction modulo $q$. Then
the full Birch-Swinnerton-Dyer conjecture asserts that, under the assumption that $L(E, 1) \neq 0$, we have
\begin{equation}\label{fbsd}
L(E, 1)/\omega(E) = c_\infty(E) \prod_{q|C} c_q(E) \#(\Sha(E))/\#(E(\BQ))^2.
\end{equation}
We stress that this full Birch-Swinnerton-Dyer conjecture is known at present only for very few elliptic curves $E$.
If $p$ is any prime number, the equality of the powers of $p$ occurring on the two sides of \eqref{fbsd}
will be called the exact $p$-Birch-Swinnerton-Dyer formula. Iwasawa theory does provide a proof of the exact
$p$-Birch-Swinnerton-Dyer formula for all but a finite number of odd primes $p$ once we know that $L(E, 1) \neq 0$.
However, the methods of Iwasawa theory yield nothing at present for the 2-part of the exact formula, and we stress
that it is the 2-part of the Birch-Swinnerton-Dyer formula which is needed for carrying out Tian's induction argument
for quadratic twists.

\medskip

For the remainder of this paper, we let $A$ be the modular curve
$X_0(49)$, which has genus 1, and which we view as an elliptic curve
by taking $[\infty]$ to be the origin of the group law. It is well
known that $A$ has complex multiplication by the ring of integers
$\mathfrak O = \BZ \left[\frac{1+\sqrt{-7}}{2}\right]$ of the field
$F=\BQ(\sqrt{-7})$, and has a minimal Weierstrass equation given by
$$
\qquad y^2 + xy = x^3 - x^2 - 2x -1.
$$
Moreover, $A(\BQ)= \BZ/2\BZ$, and consists of the cusps $[\infty]$
and $[0]=(2, -1)$. The discriminant of $A$ is $-7^3$, the
$j$-invariant of $A$ is $-3^35^3$, and its \Neron differential
$\omega=\frac{dx}{2y+x}$ has fundamental real period
$\Omega_\infty=\frac{\Gamma\left(1/7\right)\Gamma\left
(2/7\right)\Gamma\left(4/7\right)}{2\pi\sqrt{7}}$. Also, a simple
computation shows that $\BQ(A[2])=\BQ(\sqrt{-7})$, and
$\BQ(A[4])=\BQ(i, \sqrt[4]{-7})$. Writing $L(A, s)$ for the complex
$L$-series of $A$, we have
$$
L(A,1)/\Omega_\infty = 1/2.
$$
\noindent Further, it is known that the Tate-Shafarevich group
of $A$ is trivial, and that the conjecture of Birch and
Swinnerton-Dyer is valid for $A$. However, we stress that the 2-part
of the conjecture of Birch and Swinnerton-Dyer is still unknown for
arbitrary quadratic twists of $A$, even when the complex $L$-series
of the twist does not vanish at $s=1$. Note that, for a discriminant
$d$, which is prime to $7$, the curves $A^{(d)}$ and $A^{(-7d)}$ are
isogenous over $\BQ$. It is then easily seen that the root number of
$A^{(d)}$ is $+1$ if and only if either $d > 0$ and $d$ is prime to $7$,
or $d < 0$ and is divisible by 7. We use similar ideas to those of Zhao
\cite{CKLZ} to prove the following two theorems about the values at $s=1$
of the $L$-series of quadratic twists of $A$ with root number $+1$. We
also give a proof of both results by Waldspurger's formula in section 5.
\begin{thm}\label{main3}
Let $R=q_1 \cdots q_r$ be a product of $r \geq 0$ distinct primes,
which are $\equiv 1\ \mod 4$ and inert in the field $F$.
Then $L(A^{(R)}, 1) \neq 0$, $A^{(R)}(\BQ)$ is finite, the
Tate-Shafarevich group of $A^{(R)}$ is finite of odd cardinality,
and the full Birch-Swinnerton-Dyer conjecture is valid for
$A^{(R)}$.
\end{thm}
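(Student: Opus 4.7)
The plan is to proceed by induction on $r$, the number of distinct prime factors of $R$. The base case $r=0$ (so $R=1$) is just the fact, recorded in the introduction, that the full Birch-Swinnerton-Dyer conjecture is already known for $A$ itself: $L(A,1)/\Omega_\infty=1/2$, $A(\BQ)=\BZ/2\BZ$, and $\Sha(A)$ is trivial. For the inductive step, assume the theorem holds for $R'=q_1\cdots q_{r-1}$, and set $q=q_r$ so that $R=R'q$. Since $R>0$ is coprime to $7$ and each $q_i\equiv 1\pmod 4$ is inert in $F$, the root number of $A^{(R)}$ is $+1$ (as noted immediately before the theorem), so the non-vanishing assertion is at least consistent with the functional equation.

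The main technical tool is descent by the $\BQ$-rational $2$-isogeny $\phi\colon A\to A'$ whose kernel is generated by the rational $2$-torsion point $[0]=(2,-1)$, together with its dual $\widehat\phi\colon A'\to A$. Both isogenies twist to give $\phi\colon A^{(R)}\to (A')^{(R)}$ and $\widehat\phi\colon (A')^{(R)}\to A^{(R)}$, and I would compute the local $\phi$- and $\widehat\phi$-Selmer conditions at every prime. At each $q_i$, the hypothesis that $q_i$ is inert in $F=\BQ(\sqrt{-7})$ makes it a supersingular prime of good reduction for $A$, while the congruence $q_i\equiv 1\pmod 4$ controls the $2$-power torsion of the relevant local groups; together they force the local Selmer conditions at $q_i$ to be one-dimensional in a way that is stable under induction. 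Comparing the Selmer groups of $A^{(R)}$ with those of $A^{(R')}$, and using the inductive hypothesis that $\Sha(A^{(R')})$ has odd order---so the $2$-Selmer group of $A^{(R')}$ is just $A^{(R')}(\BQ)[2]$---one should deduce $\dims S^{(\phi)}(A^{(R)}/\BQ)=\dims S^{(\widehat\phi)}((A')^{(R)}/\BQ)=1$.

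From these Selmer bounds, standard descent formalism gives that the rank of $A^{(R)}(\BQ)$ is zero and that $\Sha(A^{(R)})[2^\infty]=0$. Non-vanishing of $L(A^{(R)},1)$ and finiteness of $\Sha(A^{(R)})$ then follow from Rubin's work on the main conjecture for elliptic curves with CM by $F$, which in addition yields the full Birch-Swinnerton-Dyer formula for $A^{(R)}$ away from $2$; the introduction also promises a second proof of non-vanishing via Waldspurger's formula in Section $5$. It remains to establish the $2$-part of \eqref{fbsd} for $A^{(R)}$, and by induction this reduces to showing that in passing from $R'$ to $R=R'q$ the $2$-adic valuations of the two sides of \eqref{fbsd} change by the same amount. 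On the algebraic side, this change is pinned down by the Tamagawa factor $c_q(A^{(R)})$, the change in real period, and the fact that $\#\Sha[2^\infty]$ and $\#(A^{(R)}(\BQ))^2$ remain $2$-adically trivial along the induction; on the analytic side, the change should come from a Zhao-type induction formula for $L(A^{(R)},1)/\omega(A^{(R)})$ in the spirit of \cite{CKLZ}, or equivalently from tracking a Waldspurger theta-coefficient as one multiplies $R'$ by $q$.

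The main obstacle is this last matching step. The Selmer side is essentially classical $2$-descent, and the hypotheses on the $q_i$ are tailored to make each local computation transparent; the delicate point is pinning down the $2$-adic valuation of $L(A^{(R)},1)/\omega(A^{(R)})$ with enough precision to line up with the algebraic side \emph{exactly}, not merely up to a bounded power of $2$. I expect that the congruence $q_i\equiv 1\pmod 4$ combined with the inertness of $q_i$ in $F$ is precisely what makes the local root numbers, local Tamagawa factors, and local $L$-value contributions at $q=q_r$ all fit together at each inductive step; once the one-step formula is verified, the four assertions of the theorem follow simultaneously from the induction hypothesis.
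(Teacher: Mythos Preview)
Your $2$-descent computation is correct and matches the paper (Corollary \ref{descent3} via Propositions \ref{descent1} and \ref{descent2}): for $M=R_+$ one gets $\mathfrak S^{(2)}(A^{(R)})=0$, hence rank zero and $\Sha(A^{(R)})[2]=0$. The gap is in what you do next. You write that ``non-vanishing of $L(A^{(R)},1)$ and finiteness of $\Sha(A^{(R)})$ then follow from Rubin's work on the main conjecture'', but this is the wrong direction: Rubin's theorem (like Kolyvagin's) takes $L(E,1)\neq 0$ as \emph{input} and outputs finiteness of $\Sha$ together with the $p$-part of \eqref{fbsd} for odd $p$. Knowing only that the $2$-Selmer group is trivial gives you rank zero, but it tells you nothing about $\Sha[p^\infty]$ for odd $p$, and hence nothing that lets you run the main conjecture backwards to conclude $L(A^{(R)},1)\neq 0$. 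That implication is a case of BSD itself.

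In the paper the logic is reversed, and this is the essential point. The central result is Theorem \ref{ii}: the \emph{exact} equality $\ord_2\bigl(L^{(alg)}(A^{(R)},1)\bigr)=r-1$, established by induction on $r$ via Zhao's averaging identity \eqref{av} and the integrality of $\Psi_m$ (Proposition \ref{i}), using crucially that for an inert prime $q\equiv 1\pmod 4$ the Euler factor satisfies $\ord_2(1-\bar\psi((q))/q^2)=1$ (see \eqref{ss1}). This analytic computation is not a mere ``matching step'' at the end; it is what gives non-vanishing in the first place, and only then does Rubin's theorem apply to handle the odd part of BSD. The $2$-part of BSD is obtained by confronting $\ord_2=r-1$ with the descent result $\Sha(A^{(R)})[2]=0$ and the Tamagawa factors $c_7=c_{q_i}=2$ from Proposition \ref{descent8}. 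So your descent and your inductive framework are sound, but you must supply an independent proof of $L(A^{(R)},1)\neq 0$---indeed of $\ord_2(L^{(alg)}(A^{(R)},1))=r-1$---before Rubin's work can be invoked, and that is exactly what the paper's Zhao-method argument (or the alternative Waldspurger computation in Theorem \ref{ord2LR+}) provides.
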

We remark that the non-vanishing result of this theorem
can be given a completely different proof by the techniques used to
establish Theorem \ref{bh}, but at present we have no idea how to
prove the $2$-part of the conjecture of Birch and Swinnerton-Dyer by
such methods. However, the knowledge of the $2$-part of the conjecture
of Birch and Swinnerton-Dyer for the twists of $A$ in Theorem
\ref{main3} turns out to be vital for the proof of Theorem
\ref{main2} below. For any $r \geq 0$ distinct primes
$q_1, \ldots, q_r$, which are all $\equiv 1 \mod 4$ and
inert in $F$, define the field
\begin{equation}\label{t2}
\mathfrak H = \BQ(A[4], \sqrt{q_1}, \ldots, \sqrt{q_r}) = \BQ(i,
\sqrt[4]{-7}, \sqrt{q_1}, \ldots, \sqrt{q_r}).
\end{equation}
For each square free integer $M$, prime to $7$, with $M\equiv 1 \mod 4$, we define
$$
L^{(alg)}(A^{(M)}, 1) = L(A^{(M)}, 1)/\Omega_\infty(A^{(M)}),
$$
which is well known to be a rational number, where $\Omega_\infty(A^{(M)})$ is the least positive real period of $A^{(M)}$. We will always normalise
the order valuation at $2$ by $ord_2(2) = 1$.
\begin{thm}\label{main4}
Let $R = q_1\cdots q_r$ be a product of $r \geq 0$ distinct primes
$\equiv 1 \mod 4$, which are inert in $F$, and let $N = p_1\cdots p_k$
be a product of $k \geq 1$ distinct primes, all of which split
completely in the field $\mathfrak H$ defined by \eqref{t2}.
Put $M = RN$. Then
\begin{equation}\label{t3}
ord_2(L^{(alg)}(A^{(M)}, 1) )\geq r +2k,
\end{equation}
and, if $L(A^{(M)}, 1) \neq 0$, the $2$-primary subgroup of the
Tate-Shafarevich group of $A^{(M)}$ is non-zero.
\end{thm}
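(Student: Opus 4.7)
The plan is to establish \eqref{t3} by induction on $k \geq 1$, with Theorem \ref{main3} serving as the anchor, and then to deduce the non-triviality of $\Sha(A^{(M)})[2^\infty]$ by a 2-adic Birch--Swinnerton-Dyer bookkeeping. In the ``$k = 0$'' case $M = R$, full BSD (Theorem \ref{main3}) together with $\#\Sha(A^{(R)})$ odd and $\#A^{(R)}(\BQ) = 2$ reduces the claim to a Tate-algorithm computation of $c_\infty(A^{(R)}) \prod_q c_q(A^{(R)})$, which yields $\text{ord}_2(L^{(alg)}(A^{(R)}, 1)) = r$.

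For the inductive step, setting $M' = RN / p_k$ and $M = M' p_k$, I want to prove
\[
\text{ord}_2\bigl(L^{(alg)}(A^{(M)}, 1)\bigr) \;\geq\; \text{ord}_2\bigl(L^{(alg)}(A^{(M')}, 1)\bigr) + 2.
\]
Exploiting the CM of $A$, write $L(A^{(M)}, s) = L(\psi \chi_M, s)$, where $\psi$ is the Hecke character of $F = \BQ(\sqrt{-7})$ attached to $A$ and $\chi_M$ is the quadratic character of $\BQ(\sqrt M)/\BQ$. Following the pattern of Zhao's expansion in \cite{CKLZ}, $L^{(alg)}(A^{(M)}, 1)$ becomes an explicit finite sum over representatives of a ray class group of $F$, made transparent by $F$ having class number 1. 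The hypothesis that $p_k$ splits completely in $\mathfrak H$ unpacks into: (a) $p_k$ splits in $F$, so $p_k = \alpha \bar\alpha$ with $\alpha \in \mathcal O_F$; (b) Frobenius at $p_k$ acts trivially on $A[4]$, equivalent (after adjusting $\alpha$ by a unit) to the congruence $\alpha \equiv 1 \pmod{2 \mathcal O_F}$; (c) $\chi_{q_j}(p_k) = 1$ for each $j = 1, \ldots, r$. Substituting (a)--(c) into the character sum for $L^{(alg)}(A^{(M)}, 1)$ and comparing term-by-term with the sum for $L^{(alg)}(A^{(M')}, 1)$ yields the two extra powers of 2.

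For the non-triviality of $\Sha(A^{(M)})[2^\infty]$ when $L(A^{(M)}, 1) \neq 0$: by Gross--Zagier--Kolyvagin both $A^{(M)}(\BQ)$ and $\Sha(A^{(M)})$ are finite. A Tate-algorithm tally at $7$, at each $q_i$, at each $p_j$, and at $\infty$ (with $c_\infty = 2$ because $M > 0$ and $\#A^{(M)}(\BQ)^2 = 4$) shows that $\text{ord}_2(c_\infty \prod_q c_q / \#A^{(M)}(\BQ)^2)$ is strictly less than $r + 2k$. Combining this strict inequality with \eqref{t3} and the standard 2-adic one-sided BSD divisibility for $A^{(M)}$ (available via 2-descent under the non-vanishing hypothesis) forces $\text{ord}_2(\#\Sha(A^{(M)})) \geq 1$.

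The main obstacle I anticipate is achieving the exact gain of two powers of 2 in the inductive step, rather than a weaker positive gain. A weaker bound follows from condition (b) alone, but the full factor of $4$ needs (b) to contribute two factors of $2$ (via the separate congruences on $\alpha$ and $\bar\alpha$) together with (c) to prevent cancellation among the various $\chi_{q_j}$-signs in the character-sum formula. Threading Theorem \ref{main3}'s BSD input cleanly through the induction while maintaining the precise 2-adic bookkeeping in Zhao's expansion is the delicate technical core.
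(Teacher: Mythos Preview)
There are two genuine gaps. First, your inductive step for the $L$-value bound is not a proof, and the direct comparison $\ord_2(L^{(alg)}(A^{(M)}, 1)) \geq \ord_2(L^{(alg)}(A^{(M')}, 1)) + 2$ is not what Zhao's method yields (nor does it even make sense when $L(A^{(M')},1)=0$). The paper instead splits each $p_j$ into its two primes of $F$ and inducts on the number $n$ of such primes used, proving $\ord_v(L(\bar\psi_{R\mathfrak N_n},1)/\Omega_\infty) \geq r + n$ for the Hecke $L$-value of the (generally non-rational) twist by $R\mathfrak N_n$. The mechanism is the averaging identity $\sum_D L_{S}(\bar\psi_D,1)/\Omega_\infty = 2^{r+n}\Psi_{r,n}$ with $\Psi_{r,n}$ integral at places above $2$: one bounds every summand other than the target, using Theorem \ref{ii} together with the fact that the Euler factor $1 - \bar\psi_D(\mathfrak p_j)/p_j$ is divisible by $4$ whenever $D \mid R$ (this is exactly where your conditions (b) and (c) enter, via $F(A^{(D)}[4]) \subset \mathfrak H$). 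Note also that the correct base value is $\ord_2(L^{(alg)}(A^{(R)},1)) = r-1$ by Theorem \ref{ii}, not $r$: your claim $c_\infty(A^{(M)}) = 2$ is wrong, since $A^{(M)}(\BR)$ is connected (Proposition \ref{descent8}). With the correct anchor $r-1$, a ``gain $2$ per rational prime'' scheme would only reach $r-1+2k$; the missing power of $2$ is supplied by the $2^{r+n}$ on the right of the averaging identity, not by any termwise comparison of two $L$-values.

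Second, your argument for $\Sha(A^{(M)})[2^\infty] \neq 0$ invokes a ``$2$-adic one-sided BSD divisibility'' in the direction that would force $\Sha$ to be large. No such divisibility is known for $p = 2$; the paper explicitly stresses that Iwasawa-theoretic methods give nothing here. The correct argument is a direct $2$-descent (Corollary \ref{descent6}): under the hypotheses $N_+ = N > 1$ is itself a nontrivial element of the reduced $2$-Selmer group $\mathfrak S^{(2)}(A^{(M)})$, and once Kolyvagin gives $A^{(M)}(\BQ)$ finite this element must come from $\Sha(A^{(M)})[2]$.
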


\noindent We will give two proofs of this result, one by Zhao's method, and the other using Waldspurger's formula. In fact, the approach via Waldspurger's formula gives a slightly stronger result (see Theorem \ref{bw}), but this stronger statement is not needed for the proof of the following theorem. For the twists of $A$ with root number $-1$, we use
similar ideas to those developed in \cite{Tian2} and
\cite{Tian}, to show that one can combine Theorems \ref{main3}
and \ref{main4} with the theory of Heegner points, to establish the
following result.

\begin{thm}\label{main2}
Let $l_0$ be a prime number $>3$, which is $\equiv 3 \mod 4$
and is inert in the field $F$. Assume that $q_1, \ldots, q_r$
are distinct rational primes, which are  $\equiv 1 \mod 4$,
and inert in both the fields $F$ and $\BQ(\sqrt{-l_0})$.
Let $k$ be any integer $\geq 0$, and let $p_1, \ldots, p_k$
be distinct primes which all split completely in the field
$\mathfrak H$ defined by \eqref{t2}. Put $N = p_1\cdots p_k$,
$R = l_0q_1\cdots q_r$, and $M = -RN$. Assume that the ideal
class group of the imaginary quadratic field $\BQ(\sqrt{-l_0N})$
has no element of exact order $4$. Then $L(A^{(M)},s)$ has a
simple zero at $s=1$, $A^{(M)}(\BQ)$ has rank one, and the
Tate-Shafarevich group of $A^{(M)}$ is finite of odd cardinality.
\end{thm}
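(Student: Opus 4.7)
Since $M = -RN < 0$ is prime to $7$, the root number of $A^{(M)}$ is $-1$, so $L(A^{(M)}, s)$ vanishes to odd order at $s=1$. The plan is to construct an explicit Heegner-type rational point $y \in A^{(M)}(\BQ)$ and prove that it is non-torsion; the Gross-Zagier formula will then force the order of vanishing to be exactly one and the rank of $A^{(M)}(\BQ)$ to equal one, while Kolyvagin's theorem will yield finiteness of $\Sha(A^{(M)})$. The oddness of $\#\Sha(A^{(M)})$ will follow from an upper bound on the $2$-Selmer group of $A^{(M)}$. The argument is by induction on the number $k$ of primes dividing $N$, in the style of \cite{Tian, Tian2}.

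Let $K = \BQ(\sqrt{-l_0 N})$. Using that $-1$ and $l_0$ are both non-squares modulo $7$ (so $-l_0$ is a square modulo $7$) and that each $p_j$ splits in $\BQ(\sqrt{-7}) \subset \mathfrak H$, the prime $7$ splits in $K$, and $K$ satisfies the Heegner hypothesis for the parametrization $X_0(49) \to A$. On the other hand, each $q_i$ is inert in $K$ (as $q_i$ is inert in $\BQ(\sqrt{-l_0})$ and each $p_j$ splits in $\BQ(\sqrt{q_i}) \subset \mathfrak H$). The discriminant of $K$ is $-l_0 N = -l_0 p_1 \cdots p_k$, so by genus theory the $2$-rank of $\Cl(K)$ is $k$, and the hypothesis that $\Cl(K)$ has no element of exact order $4$ forces the $2$-Sylow of $\Cl(K)$ to be exactly $(\BZ/2\BZ)^k$. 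We construct $y$ starting from the Heegner point of conductor $c = q_1 \cdots q_r$ on $A$ relative to $K$, which lives in the ring-class field of $K$ of conductor $c$; applying a Kolyvagin-type averaging against a character built from the order-$2$ characters at each $q_i$ together with a suitably chosen genus character of $\Cl(K)$, we obtain a class in $H^1(\BQ, A^{(M)})$ which is the Kummer image of a point $y \in A^{(M)}(\BQ)$.

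The core task is to prove that $y$ is non-torsion; we induct on $k$. The base case $k=0$ has $N=1$ and $\Cl(K)$ of odd order, and $y$ is a classical Kolyvagin derivative whose non-triviality follows, via the Gross-Zagier formula and the comparison of canonical heights with $L'(A^{(-R)}, 1)$, from the $2$-part of the Birch-Swinnerton-Dyer formula for $A^{(R)}$ supplied by Theorem \ref{main3}. For the inductive step with $k \geq 1$, a Tian-type congruence expresses $y = y^{(k)}$ in terms of the analogous point $y^{(k-1)}$ for the smaller twist $M' = -RN/p_k$, up to local terms at $p_k$ which are computable modulo a controlled power of $2$ because $p_k$ splits completely in $\mathfrak H$. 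Combining this congruence with the lower bound $\ord_2\bigl(L^{(alg)}(A^{(RN)}, 1)\bigr) \geq r + 2k$ of Theorem \ref{main4} and with the precise $2$-adic valuation of $y^{(k-1)}$ known by induction, we deduce that $y^{(k)}$ is non-trivial in $A^{(M)}(\BQ)/2A^{(M)}(\BQ)$, and hence non-torsion. An upper bound on the $2$-Selmer group of $A^{(M)}$, obtained by $2$-descent using the class-group hypothesis on $K$, then forces $\Sha(A^{(M)})[2] = 0$, giving the oddness assertion.

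The principal obstacle is the inductive step: carefully tracking the $2$-adic behaviour of the Heegner-point congruence at $p_k$ and matching it with the $L$-value congruence of Theorem \ref{main4}. The hypothesis that $\Cl(\BQ(\sqrt{-l_0 N}))$ has no element of order $4$ is essential here: it forces the $2$-Sylow of $\Cl(K)$ to be as small as genus theory allows, so that no hidden $2$-divisibility coming from $\Cl(K)$ can spoil the induction.
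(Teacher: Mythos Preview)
Your overall architecture matches the paper's: induct on $k$, work with Heegner points for $K=\BQ(\sqrt{-\ell_0 N})$ of conductor $q_1\cdots q_r$, feed in Theorems~\ref{main3} and~\ref{main4} at the inductive step, and extract oddness of $\Sha$ from the $2$-descent (Corollary~\ref{descent7}). However, two parts of your sketch do not go through as written.

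\textbf{Base case $k=0$.} Your argument is circular. Gross--Zagier gives $\wh{h}(y)\propto L(A^{(q_1\cdots q_r)},1)\cdot L'(A^{(M)},1)$; knowing the first factor is nonzero (Theorem~\ref{main3}) tells you nothing about the second, which is exactly what you are trying to prove. The paper instead handles $k=0$ by a direct algebraic argument (Birch's lemma when $r=0$, Theorem~\ref{Birch2} when $r\ge 1$), using that $f([0])\notin 2A(\BQ)$ and that $a_{q_i}=0$, to show not merely that $Y_R$ is non-torsion but the precise $2$-divisibility $Y_R\in 2^{r-1}A(\BQ(\sqrt{M}))^-+A(\BQ(\sqrt{M}))_\tor$ and $Y_R\notin 2^rA(\BQ(\sqrt{M}))^-+A(\BQ(\sqrt{M}))_\tor$. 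This exact level of $2$-divisibility is what the induction actually propagates.

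\textbf{Inductive step.} The mechanism is not a congruence between $y^{(k)}$ and a single $y^{(k-1)}$. The paper's identity (equation~\eqref{f6}) writes $Y_{R,N}=2^{k+r}\Psi_{R,N}-\sum_{1<d\mid N}Z_{dR,N}$, summing over \emph{all} nontrivial divisors $d$ of $N$. Each imprimitive point $Z_{dR,N}$ is then compared, via the height ratio \eqref{f4} coming from Gross--Zagier, to the Heegner point $Y_{R,N/d}$ built from a \emph{different} imaginary quadratic field $K_{N/d}=\BQ(\sqrt{-\ell_0 N/d})$. Theorem~\ref{main4} (applied to $A^{(d\cdot q_1\cdots q_r)}$, not to $A^{(RN)}$ in your notation where $R$ includes $\ell_0$) together with the exact valuation from Theorem~\ref{ii} for $A^{(q_1\cdots q_r)}$ gives $\ord_2$ of the height ratio, and the inductive hypothesis on $Y_{R,N/d}$ then forces $Z_{dR,N}\in 2^{k+r}A(\BQ(\sqrt{-\ell_0 R N/d}))^-$ modulo torsion. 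The class-group hypothesis is used exactly once, to ensure $[H_{R,N}:\mathfrak J_{R,N}]$ is odd so that $\ov{\Psi}_{R,N}+\Psi_{R,N}$ equals the nontrivial $2$-torsion point $T$, which is what blocks $Y_{R,N}$ from lying in $2^{k+r}A(\BQ(\sqrt{M}))^-+\mathrm{tors}$.
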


\noindent Note that, in the special case when $k=0$ but $r$ is arbitrary, no hypothesis about the ideal class group
is needed for the statement of the theorem, since $\BQ(\sqrt{-l_0})$ has odd class number.
We also remark that, in the paper \cite{CST}, the assertions of Theorems \ref{main4}
and \ref{main2}, are strengthened to show that both theorems hold under the weaker hypothesis
that the primes $p_1,..., p_k$ split completely in the subfield $\BQ(A[4], \sqrt{R})$ of $\mathfrak H$.
Unfortunately, we still do not know enough at present to
prove that the orders of the Tate-Shafarevich group of the twists of
$A$ in Theorems \ref{main4} and \ref{main2} are as predicted by the
conjecture of Birch and Swinnerton-Dyer.

\medskip

We end this introduction
by saying that, for every elliptic curve $E$ defined over $\BQ$,
we believe there should be some analogues of Theorems \ref{mp},
\ref{main3}, \ref{main4}, and \ref{main2} for the family of quadratic
twists of $E$, and it seems to us to be an important problem to first
formulate precisely what such analogues should be, and then to prove them.

\medskip

In conclusion, we thank Li Cai and John Cremona for some very helpful comments on the questions discussed in this paper.
We also thank the Department of Mathematics and PMI at POSTECH, Korea, for their generous support of this research.

\medskip

\section{Generalization of Birch's Lemma}
The aim of this section is to prove Theorem \ref{bh}. As in the
Introduction, let $E$ be an elliptic curve over $\BQ$ of conductor
$C$, and let $\phi = \sum_{n\geq1}a_nq^n$ be the corresponding
primitive cusp form on $\Gamma_0(C)$. Let $K$ be an imaginary
quadratic field, which, for simplicity, we assume is not equal to $\BQ(i), \BQ(\sqrt{-3})$. We write
$\CO$ for the ring  of integers of $K$. We assume throughout this
section that $K$ satisfies the so called Heegner hypothesis for $E$,
namely that every  prime factor of $C$ splits in $K$. Thus there
exists $\mathfrak C\subset \CO$ such that $\CO/\mathfrak C\cong \BZ/C\BZ$. For
each positive integer $M$ with $(M, C)=1$, let $\CO_M=\BZ+M\CO$ be
the order of $K$ of conductor $M$. Writing $\mathfrak C_M=\mathfrak C\cap \CO_M$,
the point
\begin{equation}\label{hp}
P_M=(\BC/\CO_M\ra \BC/\mathfrak C_M^{-1})
\end{equation}
on $X_0(C)$ is defined over the ring class field $H_M$ of $K$, and is called
a Heegner point of conductor $M$. We recall that $H_M$ is the
abelian extension over $K$ characterized by the property that the
Artin map induces an isomorphism $ \wh{K}^\times/K^\times
\wh{\CO}_M^\times\stackrel{\sim}{\lra}\Gal(H_M/K)$, where
$\wh{K}^\times$ denotes the \idele group of $K$, and
$\wh{\CO}_M^\times$ denotes the tensor product over $\BZ$ of $\CO_M^\times$ with $\wh{\BZ} = \prod_p\BZ_p$.
The Heegner points $P_M$ are related
to the value at $s=1$ of the derivative of the $L$-function by the following generalized
Gross-Zagier formula, which is proven in general by Yuan-Zhang-Zhang in \cite{YZZ}, and
its explicit form used here in \cite{CST}. If $\chi$ denotes an abelian character of $K$,
we write $L(E/K, \chi, s)$ for the complex $L$-series of $E/K$ twisted by $\chi$.

\begin{thm} \label{M5} Let $E$ be an
elliptic curve over $\BQ$ of conductor $C$, and let $f: X_0(C)\ra E$ be a modular
parametrization as in \eqref{mp}. Let $K \neq \BQ(i), \BQ(\sqrt{-3})$ be an imaginary quadratic field of
discriminant $d_K$, and assume that every prime dividing $C$ splits in $K$. Let $\chi$ be any ring class character of $K$ with
conductor $M$, where $M \geq 1$ is such that $(M, Cd_K)= 1.$ Let $P_M$ denote the Heegner point on $X_0(C)$ of
conductor $M$ defined by \eqref{hp}, and put
 $$\displaystyle{P_\chi(f):=\sum_{\sigma \in
\Gal(H_M/K)} f(P_M)^{\sigma} \chi(\sigma),}$$
which lies in the tensor product of $E(H_M)$ with $\BC$. Then
$$L'(E/K,\chi, 1)=\frac{8\pi^2 (\phi, \phi)_{\Gamma_0(C)}}{
\sqrt{|D_K M^2|}}\cdot\frac{ \wh{h}_{K}\left(P_\chi(f)\right)}{ \deg
f},$$
where $\wh{h}_K$ denotes the \Neron-Tate height on $E$ over $K$,
$\phi=\sum_n a_n q^n$ is the primitive eigenform of weight 2 attached to $E$, and the Petersson norm is defined by
$$(\phi, \phi)_{\Gamma_0(C)}=\iint_{\Gamma_0(C)\bs \CH} |\phi(z)|^2
dx dy, \qquad z=x+iy.$$
\end{thm}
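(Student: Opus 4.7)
The strategy is to reduce Theorem \ref{M5} to the general Gross--Zagier formula of Yuan--Zhang--Zhang \cite{YZZ}, which is formulated on quaternionic Shimura curves and expresses the central derivative of a twisted $L$-function as a N\'eron--Tate height pairing of a CM point, multiplied by explicit local and global constants. Since every prime dividing $C$ splits in $K$, the Heegner hypothesis is satisfied, and the relevant incoherent quaternion algebra over $\BA_\BQ$ reduces to $M_2(\BA_\BQ)$; consequently the Shimura curve in loc.\ cit.\ collapses to the classical modular curve $X_0(C)$, and the abstract CM point becomes the Heegner point $P_M$ of conductor $M$ as in \eqref{hp}. The abelian character $\chi$ of $\Gal(H_M/K)$ corresponds via the Artin reciprocity $\wh K^\times/K^\times\wh\CO_M^\times\stackrel{\sim}{\lra}\Gal(H_M/K)$ to a finite order Hecke character of $K$, and the $\chi$-isotypic combination $P_\chi(f)$ is exactly the image under $f$ of the CM cycle appearing on the geometric side of the YZZ formula.

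The next step is to specialize the test vectors in \cite{YZZ}. The newform $\phi$ is unramified outside $C$ and, thanks to the splitting of primes of $C$ in $K$ and to $(M,Cd_K)=1$, all local toric integrals at finite places reduce to explicit products of local $L$-values that match the archimedean normalizations used in \cite{YZZ}. The test vector can thus be taken to be $\phi$ itself inside the old-and-new decomposition of the $\pi_E$-isotypic component, and the modular parametrization $f$ passes the abstract formula from cohomology of $X_0(C)$ to $E$, producing the factor $\deg f$ in the denominator.

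What remains, and is the main obstacle, is tracking all normalizations carefully: (i) the automorphic vs.\ Hasse--Weil normalization of $L(E/K,\chi,s)$, so that the centre is at $s=1$; (ii) the definition of the Petersson inner product on $\Gamma_0(C)\bs\CH$ with the measure $dx\,dy$ used here, as opposed to the hyperbolic measure $dx\,dy/y^2$ in \cite{YZZ}; (iii) the archimedean Euler factor, which contributes the $8\pi^2$; and (iv) the discriminant factor $\sqrt{|D_K M^2|}$, which appears because the order $\CO_M$ has discriminant $D_K M^2$ and this discriminant enters the conductor-dependent normalization of both the measure on $\wh K^\times$ and the Tamagawa measure on the relevant torus. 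Carrying out this bookkeeping, exactly as performed in \cite{CST}, converts the identity of \cite{YZZ} into the stated formula.

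Finally, one checks that the height pairing $\wh h_K(P_\chi(f))$ on the right-hand side is well defined: by the Manin--Drinfeld theorem and the fact that $\chi$ is non-trivial on $\Gal(H_M/K)$ (or by subtracting the trace when $\chi$ is trivial), the sum $P_\chi(f)$ lies in the subspace where the N\'eron--Tate pairing is positive definite, so the right-hand side is a well-defined element of $\BC$. The essential conceptual input, beyond bookkeeping, is the local toric integral computation at the ramified primes of $\chi$, but since $(M,Cd_K)=1$ these are simple unramified calculations, leaving the archimedean comparison and the global normalization constants as the only genuinely delicate points.
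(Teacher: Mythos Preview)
Your proposal is correct and matches the paper's own treatment: the paper does not give a self-contained proof of Theorem~\ref{M5} but rather cites it as the generalized Gross--Zagier formula proven by Yuan--Zhang--Zhang in \cite{YZZ}, with the explicit constants worked out in \cite{CST}. Your sketch of how the explicit formula is extracted from \cite{YZZ}---specialization to the split quaternion algebra under the Heegner hypothesis, choice of the newform test vector, and the normalization bookkeeping carried out in \cite{CST}---is exactly the route those references take.
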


For a discussion of the action of various operators on the Heegner points on $X_0(C)$, see \cite{Gross}. In particular, let $\mathfrak P_M$ for the set of all conjugates of $P_M$ under the action of the Galois group of $H_M$  over $K$. Then,  writing $\tau$
for complex conjugation and $w_{C}$ for the Fricke involution,
we have the equality of sets
\begin{equation}\label{h2}
w_C \mathfrak P_M =\tau \mathfrak P_M.
\end{equation}
\noindent  For the remainder of this section we shall always take $\ell_0$ to be any
prime with $\ell_0 > 3$ and $\ell_0 \equiv 3 \mod 4$, and define
\begin{equation} \label{h3}
K = \BQ(\sqrt{-\ell_0}).
\end{equation}
Thus, by classical genus theory, $K$ has odd class number. The
following result is essentially due to Birch.
\begin{thm}\label{bl}
Let $E$ be any elliptic curve defined over $\BQ$ with modular
parametrization \eqref{mp}, and assume that $f([0])$ does not belong
to $2E(\BQ)$.  Let $K$ given by \eqref{h3} be such that every prime
dividing $C$ splits in $K$. Let $\mathfrak C$ be an ideal in $\CO$ such
that $\CO/\mathfrak C\cong \BZ/C\BZ$, and let $P= P_1$ given by \eqref{hp} be the corresponding Heegner point of conductor 1.
Then $y_K=\RTr_{H/K} f(P)$ is of infinite order in $E(K)$.
\end{thm}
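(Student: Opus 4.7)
The plan is to derive a congruence $(1+\tau)y_K\equiv f([0])\pmod{2E(K)}$ and then use the hypothesis $f([0])\notin 2E(\BQ)$ together with the odd class number of $K$ to show that $y_K$ cannot lie in $2E(K)+E(K)_{\tor}$, hence has infinite order.

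First, I would unwind the interaction of complex conjugation $\tau$ with the Atkin--Lehner involution $w_C$. Since $\phi$ is an eigenform for $w_C$ with eigenvalue $\mu\in\{\pm 1\}$, the morphism $f\circ w_C-[\mu]\circ f\colon X_0(C)\to E$ is constant; evaluating at the cusp $[\infty]$ identifies this constant as $f([0])$, so
$$f(w_C x)=\mu\,f(x)+f([0])\qquad\text{for all }x\in X_0(C).$$
Summing this over the Galois orbit $\mathfrak P_1$ of $P_1$ under $\Gal(H/K)$ and invoking \eqref{h2} yields
$$\tau y_K=\mu y_K+h_K\,f([0]).$$

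Because $\ell_0\equiv 3\pmod 4$ is the unique ramified prime of $K$, classical genus theory gives that $h_K$ is odd. Reducing the preceding identity modulo $2E(K)$, and using $-1\equiv 1$ in $\BF_2$, I obtain
$$(1+\tau)\ov{y_K}=\ov{f([0])}\qquad\text{in }E(K)/2E(K).$$
Next, a descent argument controlling $\ker\bigl(E(\BQ)/2E(\BQ)\to E(K)/2E(K)\bigr)$ via $H^1(\Gal(K/\BQ),E[2])$, and using the oddness of $h_K$ to trivialize the relevant class-field-theoretic obstructions, shows that $\ov{f([0])}$ remains nonzero in $E(K)/2E(K)$. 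In particular $y_K\notin 2E(K)$. To then rule out that $y_K$ is torsion, note that the odd-order component of any torsion element of $E(K)$ automatically lies in $2E(K)$, so only the $2$-power part of $y_K$ contributes to $\ov{y_K}$; an explicit analysis of the action of $1+\tau$ on $E(K)[2^\infty]/2E(K)[2^\infty]$, matched against the specific image $\ov{f([0])}$, then yields a contradiction.

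The hard part will be the descent: I must lift the non-divisibility $f([0])\notin 2E(\BQ)$ to $\ov{f([0])}\neq 0$ in $E(K)/2E(K)$, and separately rule out that $\ov{y_K}$ is the image of a $2$-power torsion point of $E(K)$. Both reduce to analyzing the Galois action of $\tau$ on $E(K)[2^\infty]$, and both hinge crucially on the odd class number of $K$, which kills the relevant $\BF_2$-valued cohomology.
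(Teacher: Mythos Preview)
Your overall strategy matches the paper's: derive $\tau y_K = \mu\, y_K + h_K f([0])$ from the Atkin--Lehner relation and \eqref{h2}, use that $h_K$ is odd, and then contradict the hypothesis if $y_K$ is torsion. Working modulo $2E(K)$ to sidestep the sign of $\mu$ is a harmless variation; the paper instead pins down $\mu=-1$ by evaluating at a fixed point of $w_C$.

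The gap is in your descent step. You assert that the oddness of $h_K$ ``kills the relevant $\BF_2$-valued cohomology'' governing $\ker\bigl(E(\BQ)/2E(\BQ)\to E(K)/2E(K)\bigr)$. This is not so: the class number of $K$ is a statement about $\Gal(H/K)$ and has nothing to do with $H^1(\Gal(K/\BQ),E(K)[2])$ (note the coefficients should be $E(K)[2]$, not all of $E[2]$). When $\tau$ acts trivially on $E(K)[2]$ that $H^1$ is just $E(\BQ)[2]$, which need not vanish, so no general cohomological vanishing is available.

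The missing ingredient, and the one the paper uses, is a ramification argument: $K/\BQ$ is totally ramified at $\ell_0$, while only primes dividing $2C$ can ramify in $\BQ(E[2^\infty])$, so $K\cap\BQ(E[2^\infty])=\BQ$ and hence $E(K)[2^\infty]=E(\BQ)[2^\infty]$. With this the paper finishes in one line: if $y_K$ were torsion, choose an odd integer $a$ killing the odd part of $E(K)_{\tor}$; then $ay_K\in E(K)[2^\infty]=E(\BQ)[2^\infty]$, so $a(\bar y_K+y_K)=2(ay_K)\in 2E(\BQ)$, forcing $h_Kf([0])\in 2E(\BQ)$ since $a$ is odd --- a contradiction. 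Your route can also be salvaged with this same fact (if $f([0])=2z$ with $z\in E(K)$ then $z$ is torsion, its $2$-primary part lies in $E(\BQ)[2^\infty]$, and its odd part is automatically divisible by $2$ inside $E(\BQ)$, whence $f([0])\in 2E(\BQ)$), but the justification you wrote down does not supply it.
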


\noindent We immediately deduce the following corollary, which implies the assertion of Theorem \ref{bh} when $k=1$.

\begin{cor} Under the same hypotheses as in Theorem \ref{bl}, the complex $L$-function $L(E/K,s)$ of
$E$ over $K$ has a simple zero at $s=1$,
$L(E,s)$ does not vanish at $s=1$,  and $L(E^{(-\ell_0)},s)$ has a simple zero at $s=1$.
\end{cor}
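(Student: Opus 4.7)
The plan is to combine Theorem \ref{bl} with the Gross--Zagier formula of Theorem \ref{M5} and then descend from $L(E/K,s)$ to its two quadratic twist factors over $\BQ$.

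First I would apply Theorem \ref{M5} with the trivial character $\chi = 1$ at conductor $M = 1$. Since $\Gal(H/K)$ permutes the conjugates of $P = P_1$, the twisted sum $P_\chi(f)$ collapses to $\RTr_{H/K} f(P) = y_K$, and the formula of Theorem \ref{M5} becomes
\[
L'(E/K, 1) \;=\; \frac{8\pi^2 (\phi,\phi)_{\Gamma_0(C)}}{\sqrt{\ell_0}}\cdot\frac{\wh{h}_K(y_K)}{\deg f}.
\]
By Theorem \ref{bl}, $y_K$ has infinite order, so the N\'{e}ron--Tate height $\wh{h}_K(y_K)$ is strictly positive, whence $L'(E/K, 1) \neq 0$. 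Meanwhile, the assumption that every prime dividing $C$ splits in $K = \BQ(\sqrt{-\ell_0})$ forces the global root number of $L(E/K, s)$ to equal $-1$, so $L(E/K, 1) = 0$. Together, these show that $L(E/K, s)$ has a simple zero at $s=1$, which is the first assertion of the corollary.

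To descend to $\BQ$ I would invoke the Artin factorization
\[
L(E/K, s) \;=\; L(E, s)\cdot L(E^{(-\ell_0)}, s),
\]
which gives $\ord_{s=1} L(E, s) + \ord_{s=1} L(E^{(-\ell_0)}, s) = 1$; hence exactly one of the two factors has a simple zero at $s=1$ while the other is non-vanishing there, and it remains to identify which. For this I would use \eqref{h2}: the equality $w_C \mathfrak P_1 = \tau \mathfrak P_1$, combined with the fact that the Fricke involution $w_C$ acts on the cotangent space of $E$ as the scalar $-\epsilon_E$ (where $\epsilon_E$ is the sign of the functional equation of $L(E,s)$), gives, after tracking the translation by $f([0])$, the relation
\[
\tau\, y_K \;\equiv\; -\epsilon_E\, y_K \pmod{E(K)_{\tor}}.
\]
Since the $(-1)$-eigenspace of $\tau$ on $E(K)\otimes\BQ$ is precisely $E^{(-\ell_0)}(\BQ)\otimes\BQ$, the infinite-order point $y_K$ lands in this summand exactly when $\epsilon_E = +1$; Gross--Zagier--Kolyvagin applied to $E^{(-\ell_0)}$, together with the order-sum identity above, then forces $\ord_{s=1} L(E^{(-\ell_0)}, s) = 1$ and $L(E, 1)\neq 0$, matching the stated conclusion.

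The main obstacle, in my view, lies in the bookkeeping of this last step: one has to keep careful track of the translation appearing in the Fricke action (arising from $f([\infty]) = O$ while $f([0])$ is a non-trivial torsion point) and of the torsion ambiguity, so as to pin down the sign $-\epsilon_E$ and hence the eigenspace containing $y_K$. Once that is done, everything else is an essentially formal consequence of Theorems \ref{bl} and \ref{M5} together with the standard root-number analysis under the Heegner hypothesis.
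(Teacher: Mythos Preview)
Your argument for the first assertion---the simple zero of $L(E/K,s)$---is correct and is essentially the paper's: apply Gross--Zagier (Theorem \ref{M5}) with the trivial character and invoke Theorem \ref{bl}.

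The gap is in determining which of the two factors $L(E,s)$ and $L(E^{(-\ell_0)},s)$ vanishes. Your relation $\tau\, y_K \equiv -\epsilon_E\, y_K \pmod{E(K)_\tor}$ is correct, but by itself it does not pin down $\epsilon_E$: if $\epsilon_E = -1$, then $y_K$ lies (modulo torsion) in the $(+1)$-eigenspace $E(\BQ)\otimes\BQ$, and Gross--Zagier--Kolyvagin would give that $L(E,s)$ has a simple zero at $s=1$ while $L(E^{(-\ell_0)},1)\neq 0$. This scenario is fully consistent with everything you have established, so no amount of ``bookkeeping of torsion'' will break the symmetry between the two factors. You never actually use the hypothesis $f([0])\notin 2E(\BQ)$ in this step, and that is precisely what is needed.

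The paper supplies the missing idea: since $f\circ w_C - \epsilon f$ is a constant morphism (where $\epsilon = -\epsilon_E$), one has $f(P^{w_C}) - \epsilon f(P) = f([0])$ for every $P$ on $X_0(C)$. If $\epsilon = 1$, evaluating at any fixed point of the Fricke involution $w_C$ gives $f([0]) = O$, contradicting $f([0])\notin 2E(\BQ)$. Hence $\epsilon = -1$, i.e.\ $\epsilon_E = +1$, and now either your eigenspace argument or the bare root-number parity finishes the proof.
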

\begin{proof}
Note that  $f\circ w_{C} -\epsilon f$ is a constant morphism, where $\epsilon = \pm1$ is the negative of the sign in the functional equation of the complex $L$-series $L(E,s)$. Thus
for all points $P\in X_0(C)$, we have
$$f(P^{w _C}) - \epsilon f(P)=f([\infty]^{w_{C}})-\epsilon f([\infty])=f([0]).$$
If $\epsilon = 1$, we can take $P$ to be a fixed point of $w_C$, whence it would follow that $f([0]) = O$, contradicting our hypothesis that $f([0]) \notin 2E(\BQ)$. Thus necessarily
$\epsilon = -1$. It then follows from \eqref{h2} that we have
$$\ov{y}_K + y_K=h f([0]),$$
where $h$ denotes the class number of $K$. Then $T = hf([0])$ does not belong $2E(\BQ)$ because $h$ is odd. We now prove  that $y_K$
does not belong to the torsion subgroup of $E(K)$.  Observe that $E(K)[2^\infty] = E(\BQ)[2^\infty]$, because $K/\BQ$ is totally ramified at the prime $l_0$, whereas only primes dividing $2C$ are ramified in the field $\BQ(E[2^\infty])$. If $y_K$ does have finite order, then $t=ay_K\in
E(K)[2^\infty]=E(\BQ)[2^\infty]$, where $a$ denotes any odd positive integer which annihilates all elements of
 odd finite order in $E(K)$. It follows easily that
$$a(\ov{y}_K + y_K)=2t \in 2E(\BQ),$$
which would imply that $T \in 2E(\BQ)$, which is a contradiction. For the proof of the corollary, we note that, since
$y_K$ has infinite order, the theorem of Gross-Zagier tells us that
the complex $L$-series of $E$ over $K$ has a simple zero at $s=1$. Then, as $L(E/K, s) = L(E, s)L(E^{(-\ell_0)},s)$,
with $L(E, s)$ having root number $+1$, and with $L(E^{(-\ell_0)},s)$ having root number $-1$, the second assertion
of the corollary follows.

\end{proof}

We now extend Birch's result to quadratic twists with arbitrarily many prime
factors. It is convenient to introduce the following terminology. We define a
prime $q_1$ to be a {\it sensitive} supersingular prime for the elliptic curve $E$
if (i) $q_1$ is a prime of good supersingular reduction for $E$, (ii) $q_1 \equiv 1 \mod 4$, and
(iii) $C = C_E$ is a square modulo $q_1$.  If $E$ possesses a sensitive supersingular
prime $q_1$, then necessarily $E(\BQ)[2^\infty]$ has order at most $2$, because
reduction modulo $q_1$ is injective on $E(\BQ)[2^\infty]$, and there are $q_1+1$
points with coordinates in $\mathbb{F}_{q_1}$ on the reduced curve. Recall that $\phi=\sum_n a_n q^n$
is the primitive cusp form of weight $2$ for $\Gamma_0(C)$ attached to $E$.

\begin{lem}\label{fs} Assume $E$ possesses a sensitive supersingular prime $q_1$, which is inert in $K$. For each integer
$r \geq 2$, define $\Sigma_r$ to be the set of all prime $q \neq q_1$ such that
(i) $q \equiv 1 \mod 4$, (ii) $a_q \equiv 0 \mod 2^r$, (iii) $(q, C) = 1$ and $C$ is a square modulo $q$,
and (iv) $q$ is inert in $K$. Then $\Sigma_r$ is infinite of positive density in the set of primes.
\end{lem}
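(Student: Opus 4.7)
The plan is to prove the lemma by applying the Chebotarev density theorem to the Galois extension $F = \BQ(E[2^r], \sqrt{C}, \sqrt{-\ell_0})$ of $\BQ$. Let $\rho_{2^r} \colon \Gal(\ov{\BQ}/\BQ) \to \GL_2(\BZ/2^r\BZ)$ be the Galois representation on $E[2^r]$, so that $\tr \rho_{2^r}(\Frob_q) \equiv a_q \pmod{2^r}$ for every prime $q \nmid 2C$. Since $r \geq 2$, the Weil pairing gives $\BQ(i) = \BQ(\mu_4) \subseteq \BQ(\mu_{2^r}) \subseteq \BQ(E[2^r])$, hence $\BQ(i) \subseteq F$. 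Each of the four conditions defining $\Sigma_r$ then translates into a Frobenius condition in $\Gal(F/\BQ)$ that depends only on the conjugacy class of $\Frob_q$: (i) is trivial action on $\BQ(i)$; (ii) is the vanishing of $\tr \rho_{2^r}(\Frob_q)$ modulo $2^r$; (iii) is trivial action on $\BQ(\sqrt{C})$; (iv) is non-trivial action on $\BQ(\sqrt{-\ell_0})$.

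The key observation is that $\Frob_{q_1}$ itself satisfies all four of these conditions. First, $q_1$ is unramified in $F$: it is coprime to $2C$ by good reduction (and $q_1 \neq 2$), and distinct from $\ell_0$ because $q_1 \equiv 1 \pmod 4$ while $\ell_0 \equiv 3 \pmod 4$. Conditions (i), (iii), and (iv) follow directly from the definition of a sensitive supersingular prime, together with the hypothesis that $q_1$ is inert in $K$. For condition (ii), supersingularity gives $q_1 \mid a_{q_1}$, while the Hasse bound gives $|a_{q_1}| \leq 2\sqrt{q_1} < q_1$ (as $q_1 \geq 5$); together these force $a_{q_1} = 0$, and a fortiori $a_{q_1} \equiv 0 \pmod{2^r}$.

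Consequently the conjugacy class of $\Frob_{q_1}$ in $\Gal(F/\BQ)$ is non-empty and lies entirely inside the preimage of our four Frobenius conditions (each being a class function). By Chebotarev, the set of unramified primes $q$ whose Frobenius lies in this conjugacy class has density $|[\Frob_{q_1}]|/[F:\BQ] > 0$ in the set of all primes, and every such $q \neq q_1$ belongs to $\Sigma_r$. The only non-formal steps are the translation of (ii) into a trace condition on $\rho_{2^r}$ and the computation $a_{q_1} = 0$; there is no substantial obstacle. It is worth noting that while one could enlarge $F$ by adjoining more cyclotomic roots of unity to obtain finer statements, the level $2^r$ already built into $\BQ(E[2^r])$ is all that is needed here.
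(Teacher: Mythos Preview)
Your proof is correct and follows essentially the same approach as the paper: both apply Chebotarev to the compositum $\BQ(E[2^r],\sqrt{C})\cdot K$ and exhibit the conjugacy class of $\Frob_{q_1}$ as a witness, using $a_{q_1}=0$ from supersingularity plus the Hasse bound. The only cosmetic difference is that the paper extracts condition~(i) from the congruence $q\equiv q_1\pmod{2^r}$ via the determinant of Frobenius, whereas you invoke the Weil pairing to place $\BQ(i)$ inside $\BQ(E[2^r])$ directly; these are equivalent.
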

\begin{proof}  Put $J = \BQ(\sqrt{C}, E[2^{r}])$, and note that $K \cap J = \BQ$, because $\ell_0$ is totally ramified in $K/\BQ$, and only primes dividing $2C$ can ramify in $J$. Also $q_1$ is unramified in $J$ because $(q_1, 2C)=1$.  Thus, writing $\Delta = Gal(JK/\BQ)$, there will be a unique element $\sigma$ in $\Delta$, whose restriction to $K$ is complex conjugation, and whose restriction to $J$ is the Frobenius automorphism of some prime of $J$ above $q_1$.
Now, assuming $r \geq 2$, we claim that $\Sigma_r$ contains the set $\mathcal S$ of all primes not dividing $2\ell_0q_1C$, whose Frobenius automorphisms in $\Delta$ lie in the conjugacy class of $\sigma$. Granted this assertion, the Chebotarev theorem then shows that $\Sigma_r$ is infinite of positive density in the set of all prime numbers.  We now verify that the primes in this set $\mathcal S$
have all the desired properties. Indeed, as $q_1 \geq 5$ and is supersingular,
we have $a_{q_1}=0$, so that the characteristic polynomial of the Frobenius automorphism of $q_1$ acting on the $2$-adic Tate module $T_2(E)$ is equal to
$X^2 + q_1$. Similarly, the characteristic polynomial of the Frobenius automorphism of a prime $q$ not dividing $2C$ acting on $T_2(E)$ is equal to $X^2 + a_qX + q$. Since
$E[2^r] = T_2(E)/2^rT_2(E)$, we conclude that, for $q$ in our set $\mathcal S$, we must have $a_q \equiv 0 \mod 2^r$ and $q \equiv q_1 \mod 2^r$. Also $q$ is inert in $K$, since $q_1$ is inert in $K$. Finally, $q$ splits in $\BQ(\sqrt{C})$ because $q_1$ splits in this field.
\end{proof}

\noindent

\begin{thm}\label{Birch2} Assume that (i) $f([0]) \notin 2E(\BQ)$, and (ii)
there exists a sensitive supersingular prime $q_1$ for $E$. Let $K$
given by \eqref{h3} be such that every prime dividing $C$ splits in $K$,
and $q_1$ is inert in $K$.
For each integer $r \geq 1$, let $R = q_1q_2 \cdots q_r$, where, for $r \geq 2$,  $q_2, \ldots, q_r$ are any distinct primes in the set $\Sigma_r$
defined in Lemma \ref{fs}.
Then $K(\sqrt{R})$ is a subfield of the ring class field $H_R$.  Writing $\chi_R$ for the character of $K$ attached to this
quadratic extension, define the Heegner point $y_R$ by
$$
y_R=\sum_{\sigma\in \Gal(H_R/K)}\chi_R(\sigma)f(P_R)^\sigma.
$$
Then, for each integer $r \geq 1$, we have $y_R\in 2^{r-1} E(\BQ(\sqrt{-\ell_0R}))^-+E(\BQ(\sqrt{-\ell_0 R}))_\tor$, but $y_R\notin 2^rE(\BQ(\sqrt{-\ell_0R}))^-+E((\BQ(\sqrt{-\ell_0R}))_\tor$.
In particular, $y_R$ is of infinite order.
\end{thm}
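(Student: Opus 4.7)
The plan is to proceed in two main stages: first establish the field inclusion $K(\sqrt{R}) \subset H_R$, then prove the sharp $2$-divisibility assertions for $y_R$ by induction on $r$.

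For the inclusion $K(\sqrt{R}) \subset H_R$, I would use genus theory of ring class fields. Since $\ell_0 \equiv 3 \bmod 4$ and $\ell_0 > 3$, the field $K = \BQ(\sqrt{-\ell_0})$ has odd class number and $\CO_K^\times = \{\pm 1\}$, so $H_1$ is linearly disjoint from every quadratic extension of $K$. For each inert $q_i$, the Artin map identifies $\Gal(H_{q_i}/H_1)$ with the cyclic group $(\CO_K/q_i\CO_K)^\times / (\BZ/q_i\BZ)^\times \cdot \{\pm 1\}$, which has even order $(q_i+1)/\gcd(q_i+1,2)$ since $q_i \equiv 1 \bmod 4$; the unique quadratic subextension inside $H_{q_i}/H_1$ is a quadratic extension of $K$ unramified outside $q_i$, and by matching conductors one identifies it with $K(\sqrt{q_i^*})$ for an appropriate sign. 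Taking the compositum over $i = 1, \ldots, r$ and checking that the product of these quadratic characters cuts out $K(\sqrt{R})$ completes this step.

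For the main inductive argument, the base case $r = 1$ reduces to a twist version of Birch's Lemma (Theorem~\ref{bl}): the Atkin-Lehner relation $f(P^{w_C}) - \epsilon f(P) = f([0])$, combined with \eqref{h2}, yields $\bar y_{q_1} + y_{q_1} \equiv 2^? \cdot f([0]) \bmod E_\tor$, and the hypotheses that $f([0]) \notin 2E(\BQ)$ together with $E(K)[2^\infty] = E(\BQ)[2^\infty]$ (using that $\ell_0$ is totally ramified in $K/\BQ$ but does not ramify in $\BQ(E[2^\infty])$) force $y_{q_1} \notin 2E(\BQ(\sqrt{-\ell_0 q_1}))^- + E_\tor$. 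For the inductive step, the key ingredient is the Euler system norm relation for the inert prime $q_r$:
$$\tr_{H_R/H_{R'}} f(P_R) = a_{q_r} f(P_{R'}), \qquad R' = R/q_r,$$
coupled with the fact that $a_{q_r} \equiv 0 \bmod 2^r$ (by definition of $\Sigma_r$) and $a_{q_1} = 0$.

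One writes $y_R = (1 - \sigma_r) Z_R$ where $Z_R = \tr_{H_R/K(\sqrt{R})} f(P_R)$ and $\sigma_r$ generates $\Gal(K(\sqrt{R})/K)$, then decomposes $\Gal(H_R/K)$ along the tower $H_R \supset H_{R'} \supset \cdots \supset K$. Each intermediate piece contributes either a Heegner point $y_{R''}$ for a proper divisor $R''$ of $R$ (controlled by the inductive hypothesis as lying in $2^{\#\{\text{prime factors of }R''\}-1} E^- + E_\tor$) or a trace term weighted by $a_{q_i}$, which carries a factor of $2^r$. Telescoping these estimates gives the divisibility $y_R \in 2^{r-1} E(\BQ(\sqrt{-\ell_0 R}))^- + E_\tor$. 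The non-divisibility $y_R \notin 2^r E(\BQ(\sqrt{-\ell_0 R}))^- + E_\tor$, hence infinite order, is obtained by reducing the same expression modulo $2^r$: all intermediate contributions vanish modulo $2^r$ except for one principal term which, after unwinding the norm relations, is a unit multiple of $2^{r-1}$ times (the image of) $y_{q_1}$, non-trivial by the base case.

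The main obstacle will be the precise $2$-adic bookkeeping in this principal term: one must simultaneously track the action of complex conjugation $\tau$, the Galois element $\sigma_r$ on $K(\sqrt{R})$, the Fricke involution $w_C$ via \eqref{h2}, and the restriction of $\chi_R$ to every intermediate subgroup of $\Gal(H_R/K)$ arising in the norm-compatibility tower. The combinatorics of the character sum must be arranged so that the $\tau$-eigenspace computation gives exactly the $-$ component over $\BQ(\sqrt{-\ell_0 R})$, and so that the inductive contributions telescope cleanly rather than producing unexpected cross-terms. Once this is settled, the concluding assertion that $y_R$ has infinite order is immediate, since any torsion element lies in $2^r E^- + E_\tor$.
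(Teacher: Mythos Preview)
Your outline for the divisibility direction is broadly reasonable, but the non-divisibility argument has a genuine gap. You claim that modulo $2^r$ ``all intermediate contributions vanish except for one principal term which is a unit multiple of $2^{r-1}$ times $y_{q_1}$.'' This is not how the obstruction arises. If you carry out the norm-relation bookkeeping carefully you will find that $y_R$ is actually divisible by $2^r$ in the \emph{larger} field $\mathfrak{H}_R = K(\sqrt{q_1},\ldots,\sqrt{q_r})$: writing $\psi_R$ for the trace of $f(P_R)$ from $H_R$ down to $\mathfrak{H}_R$, the character-orthogonality identity $\sum_{D\mid R} z_D = 2^r\psi_R$ holds, and every imprimitive point $z_D$ with $D\neq R$ equals $b_D y_D$ with $b_D=\prod_{q\mid R/D}a_q$; this is zero if $q_1\nmid D$ (since $a_{q_1}=0$) and divisible by $2^r$ otherwise (since each $a_{q_i}\equiv 0\bmod 2^r$ for $i\geq 2$). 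So no $2^{r-1}y_{q_1}$ term survives; over $\mathfrak{H}_R$ one simply has $y_R=2^r u_R$ with $u_R=\psi_R-\sum_{D\neq R}e_D y_D$.

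Two ingredients you have not identified then do the real work. First, the descent from $\mathfrak{H}_R$ to $K(\sqrt{R})$ costs exactly one power of $2$: the restriction map $E(K(\sqrt{R}))/2^r\to E(\mathfrak{H}_R)/2^r$ has kernel killed by $2$ because $E(\mathfrak{H}_R)[2^\infty]=E(\BQ)[2]$ (the primes $\ell_0,q_i$ ramify in $\mathfrak{H}_R$ but not in $\BQ(E[2^\infty])$), so $y_R=2^{r-1}y+t$ with $y\in E(K(\sqrt{R}))$; a separate check using $a_{q_1}=0$ places $y$ in $E(\BQ(\sqrt{-\ell_0 R}))^-$. Second, and crucially, the non-divisibility comes not from $y_{q_1}$ but from the Atkin--Lehner relation applied at level $\mathfrak{H}_R$: since $[H_R:\mathfrak{H}_R]$ is odd and $w_C\mathfrak{P}(\mathfrak{H}_R)=\tau\mathfrak{P}(\mathfrak{H}_R)$, one gets $\bar{\psi}_R+\psi_R=T$ where $T=f([0])$ (arranged to have exact order $2$). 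If $y_R=2^r y'+t'$ with $y'\in E(\BQ(\sqrt{-\ell_0 R}))^-$, then after killing odd torsion $u_R-y'$ lies in $E(\BQ)[2]$, and since $\bar{y}_D+y_D=0$ for the relevant $D$ and $\bar{y}'+y'=0$, this forces $\bar{\psi}_R+\psi_R\in 2E(\BQ)$, contradicting $T\notin 2E(\BQ)$. You should reorganise the argument around $\psi_R$ and this complex-conjugation identity rather than around an inductive comparison with $y_{q_1}$.
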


\noindent Since $y_R$ has infinite order, it follows from Theorem \ref{M5}, that, under the assumptions of Theorem \ref{Birch2},
$L(E/K, \chi_R, s)$ must have a simple zero at $s=1$. But
$$
L(E/K, \chi_R, s) = L(E^{(-\ell_0R)}, s)L(E^{(R)},s).
$$
Since, by hypothesis, $C$ is a square modulo every prime dividing $\ell _0R$, and, as was shown in the proof of Theorem \ref{bl}, $L(E,s)$ has root number $+1$, it follows easily that
$L(E^{(R)},s)$ and $L(E^{(-\ell_0R)}, s)$ have global root numbers equal to $+1$ and $-1$, respectively. Thus we must have that $L(E^{(R)},1) \neq 0$, and that $L(E^{(-\ell_0R)}, s)$ has a simple zero at $s=1$. Hence the following result follows immediately from the above theorem and the theorem of Kolyvagin-Gross-Zagier.
\begin{cor} Under the same hypotheses as in  Theorem \ref{Birch2}, for all
$R = q_1 \cdots q_r$ with $r \geq 1$ we have (i) the complex $L$-series of $E^{(R)}$ does not vanish at $s=1$, and both $E^{(R)}(\BQ)$ and the Tate-Shafarevich group
of $E^{(R)}$ are finite, and (ii) the complex $L$-series of  $E^{(-\ell_0 R)}$ has a simple zero at $s=1$, $E^{(-\ell_0R)}(\BQ)$
has rank 1, and the Tate-Shafarevich group of $E^{(-\ell_0R)}$ is finite.
\end{cor}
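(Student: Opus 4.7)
The plan is to deduce the Corollary directly from Theorem \ref{Birch2} by running the Heegner point $y_R$ through the generalized Gross-Zagier formula (Theorem \ref{M5}) and then through Kolyvagin's Euler system argument. The preceding paragraph to the corollary in fact sketches the entire reduction; the job is to fill in the factorization of $L(E/K,\chi_R,s)$, the root number computation, and the final Kolyvagin-Gross-Zagier step.

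First I would note that, in the notation of Theorem \ref{M5}, the point denoted $y_R$ in Theorem \ref{Birch2} equals $P_{\chi_R}(f)$, where $\chi_R$ is the ring class character of $K$ of conductor $R$ cutting out the quadratic extension $K(\sqrt{R})/K$ inside $H_R$. Theorem \ref{Birch2} guarantees that $y_R$ has infinite order, so $\wh h_K(P_{\chi_R}(f))>0$, and Theorem \ref{M5} yields $L'(E/K,\chi_R,1)\neq 0$. Thus $L(E/K,\chi_R,s)$ has a zero at $s=1$ of order exactly $1$.

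Next I would use the standard factorization
$$L(E/K,\chi_R,s)=L(E^{(R)},s)\,L(E^{(-\ell_0 R)},s),$$
which follows because $\Ind_{G_K}^{G_\BQ}\chi_R$ is the sum of the quadratic characters of $\BQ$ associated to $\BQ(\sqrt{R})$ and $\BQ(\sqrt{-\ell_0R})$. The key sub-step is to check that the two factors have global root numbers $+1$ and $-1$ respectively. Starting from the fact, established in the proof of the corollary to Theorem \ref{bl}, that $L(E,s)$ has root number $+1$, the change in root number under the twist by $R$ (respectively $-\ell_0 R$) is a product of local signs at $\infty$, at primes dividing $C$, and at the primes $\ell_0,q_1,\ldots,q_r$. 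The conditions $\ell_0\equiv 3\bmod 4$, $q_i\equiv 1\bmod 4$, and the hypothesis that $C$ is a square modulo each of these primes (together with $(R,C)=1$) pin down each local sign change and yield $W(E^{(R)})=+1$, $W(E^{(-\ell_0 R)})=-1$. Because $L(E/K,\chi_R,s)$ vanishes to exact order $1$ at $s=1$, the $+1$-root-number factor is non-vanishing and the $-1$-root-number factor contributes the simple zero: $L(E^{(R)},1)\neq 0$ and $L(E^{(-\ell_0R)},s)$ has a simple zero at $s=1$.

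Finally I would invoke the Kolyvagin-Gross-Zagier theorem. For (i), the non-vanishing $L(E^{(R)},1)\neq 0$, combined with any Heegner-compatible auxiliary imaginary quadratic field for $E^{(R)}$, gives the finiteness of $E^{(R)}(\BQ)$ and of $\Sha(E^{(R)})$. For (ii), the simple zero of $L(E^{(-\ell_0R)},s)$ at $s=1$ implies that $E^{(-\ell_0R)}(\BQ)$ has rank $\leq 1$ and $\Sha(E^{(-\ell_0R)})$ is finite; the rank is at least $1$ because the $\chi_R$-component $y_R$ lies, modulo torsion, in the $-$-eigenspace of $E(\BQ(\sqrt{-\ell_0R}))$ under $\Gal(\BQ(\sqrt{-\ell_0R})/\BQ)$, which is (up to isogeny) $E^{(-\ell_0R)}(\BQ)$, and $y_R$ is non-torsion by Theorem \ref{Birch2}. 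The main obstacle is the root-number calculation in the middle step; once that is in hand, everything else is a direct citation of Theorems \ref{Birch2}, \ref{M5}, and of Kolyvagin-Gross-Zagier.
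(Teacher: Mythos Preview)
Your proposal is correct and follows essentially the same route as the paper's proof: infinite order of $y_R$ via Theorem \ref{Birch2}, the Gross--Zagier formula (Theorem \ref{M5}) to get a simple zero of $L(E/K,\chi_R,s)$, the factorization into $L(E^{(R)},s)L(E^{(-\ell_0R)},s)$, the root-number calculation (using that $C$ is a square modulo each prime dividing $\ell_0R$ and that $L(E,s)$ has root number $+1$), and then Kolyvagin--Gross--Zagier. Your write-up is in fact slightly more explicit than the paper's, which simply records the factorization and the root numbers and says the rest ``follows immediately.''
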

\noindent Note that, since $\Sigma _r$ is infinite when $r \geq 2$, the assertions of Theorem \ref{bh} for $k \geq 2$ follow immediately from this corollary. We note also that numerical examples of curves $E$ and a sensitive supersingular primes $q_1$ to which the above theorem can be applied are given by $E=X_0(14)$, for which $q_1 =5$, $E=X_0(49)$ with any prime $q_1$ such that $q_1 \equiv 1 \mod 4$ and $q_1$ is inert in $F=\BQ(\sqrt{-7})$, and the curves $E$ of conductor $69$ and $84$ given by \eqref{cr},  for which we can take $q_1 =5$, and $q_1=41, 89$, respectively. In each example, we choose the prime number $\ell_0$ so that  $\ell_0$ is $\equiv 3 \mod 4$ and $q_1$ is inert in $K = \BQ(\sqrt{-l_0}).$ Note also that,  for $E = X_0(49)$, and $r \geq 2$, the set $\Sigma _r$ contains all primes which are $\equiv 1 \mod 4$, and inert in both $F=\BQ(\sqrt{-7})$ and $K$.

\medskip

Let $R = q_1\cdots q_r$ be as in the statement of Theorem \ref{Birch2}. Define
\begin{equation}\label{b3}
\mathfrak H_R = K(\sqrt{q_1}, \ldots, \sqrt{q_r}).
\end{equation}
We first establish three preliminary lemmas needed in the proof of Theorem \ref{Birch2}.

\begin{lem}\label{sub} The field $\mathfrak H_R$ is a subfield of $H_R$, and the degree of $H_R$ over  $\mathfrak H_R$ is odd.
Moreover, $E(\mathfrak H_R)[2^\infty] = E(\BQ)[2]$.
\end{lem}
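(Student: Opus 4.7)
I will prove the three assertions in order; the first two are class field theory and a degree computation, while the last uses the supersingular prime $q_1$ decisively. For the inclusion $\mathfrak{H}_R \subset H_R$, it suffices to show $K(\sqrt{q_i}) \subset H_R$ for each $i$. Since $q_i \equiv 1 \bmod 4$, the extension $\BQ(\sqrt{q_i})/\BQ$ is unramified at $2$, hence so is $K(\sqrt{q_i})/K$; and since $q_i$ is inert in $K$, the latter is ramified only at $\mathfrak{q}_i = (q_i)$. Its quadratic character $\chi_{q_i}$ on $\wh{K}^\times/K^\times$ is then unramified away from $\mathfrak{q}_i$, and at $\mathfrak{q}_i$ restricts to the quadratic residue character modulo $\mathfrak{q}_i$ on $\CO_{\mathfrak{q}_i}^\times$. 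The $\mathfrak{q}_i$-component of $\wh{\CO}_R^\times$ is $\BZ_{q_i}^\times + q_i\CO_{\mathfrak{q}_i}$, whose image in $\BF_{q_i^2}^\times$ lies in $\BF_{q_i}^\times$; since $|\BF_{q_i}^\times| = q_i - 1$ divides $(q_i^2-1)/2$, each such element is a square in $\BF_{q_i^2}^\times$, making $\chi_{q_i}$ trivial on $\wh{\CO}_R^\times$. By class field theory $K(\sqrt{q_i}) \subset H_R$, and taking compositum gives $\mathfrak{H}_R \subset H_R$.

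For the odd index, the standard conductor formula (with each $q_i$ inert in $K$ and $\CO^\times = \CO_R^\times$) gives $[H_R:H_1] = \prod_{i=1}^r(q_i+1)$, whose $2$-part is exactly $2^r$ because each $q_i + 1 \equiv 2 \bmod 4$. Combined with $h_K = [H_1:K]$ odd (classical genus theory for $K = \BQ(\sqrt{-\ell_0})$, $\ell_0$ prime), the $2$-part of $[H_R:K]$ equals $2^r = [\mathfrak{H}_R:K]$, so $[H_R:\mathfrak{H}_R]$ is odd.

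For the torsion statement, supersingularity of $q_1 \equiv 1 \bmod 4$ gives $|E(\BF_{q_1})| = q_1 + 1 \equiv 2 \bmod 4$, so $|E(\BQ)[2^\infty]| \leq 2$, and $f([0]) \notin 2E(\BQ)$ forces this order to be exactly $2$ (if $E(\BQ)[2]$ were trivial, $f([0])$ would have odd order and so lie in $2E(\BQ)$). I then prove $E(\mathfrak{H}_R)[2^\infty] \subset E[2]$ and $E(\mathfrak{H}_R)[2] = E(\BQ)[2]$ separately. For the first, $\pi_{q_1}^2$ acts as $-q_1$ on $T_2(E)$ (as $a_{q_1} = 0$), hence as $-q_1 \equiv -1 \bmod 4$ on $E[4]$, so $E(\BF_{q_1^2})[2^\infty] = E[4]^{\pi_{q_1}^2 = 1} = E[2]$. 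The residue field at any prime $\mathfrak{Q}$ of $\mathfrak{H}_R$ above $q_1$ is $\BF_{q_1^2}$: $q_1$ has residue degree $2$ in $K$; adjoining $\sqrt{q_1}$ over $K$ only ramifies at the prime above $q_1$; and each subsequent $\sqrt{q_j}$ ($j \geq 2$) splits without extending the residue field, since $q_j \in \BF_{q_1}^\times$ is automatically a square in $\BF_{q_1^2}^\times$. Good reduction at $q_1$ then yields an injection $E(\mathfrak{H}_R)[2^\infty] \hookrightarrow E(\BF_{q_1^2})[2^\infty] = E[2]$. For the second, any non-trivial quadratic subfield of $\mathfrak{H}_R = \BQ(\sqrt{-\ell_0}, \sqrt{q_1}, \ldots, \sqrt{q_r})$ is ramified at some prime in $\{\ell_0, q_1, \ldots, q_r\}$, all coprime to $2C$ (the $q_i$ are good primes of $E$, and $\ell_0 \nmid C$ because every prime dividing $C$ splits in $K$); but $\BQ(E[2])/\BQ$ is ramified only at primes dividing $2C$, so $\BQ(E[2]) \not\subset \mathfrak{H}_R$, giving $E(\mathfrak{H}_R)[2] = E(\BQ)[2]$. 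Combining the two inclusions, $E(\mathfrak{H}_R)[2^\infty] = E(\BQ)[2]$.

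The main obstacle is pinning down the residue field at $q_1$ correctly through the tower $K \subset K(\sqrt{q_1}) \subset \mathfrak{H}_R$; once this is identified as $\BF_{q_1^2}$, the supersingular Frobenius calculation bounds the $2$-power torsion over $\mathfrak{H}_R$ by $E[2]$, and the elementary ramification argument then rules out the non-rational $2$-torsion.
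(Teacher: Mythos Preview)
Your proof is correct. The class field theory arguments for $\mathfrak{H}_R \subset H_R$ and the odd index are essentially the same as the paper's, just packaged slightly differently: the paper works one prime at a time, showing $K(\sqrt{q}) \subset H_q$ as the unique quadratic subextension of $H_q/K$ (since $[H_q:K]=(q+1)h$ has $2$-part exactly $2$), whereas you verify directly that the relevant quadratic characters kill $\wh{\CO}_R^\times$ and then count $2$-powers in $[H_R:K]=h\prod_i(q_i+1)$. Both arguments are equally short.

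For the torsion statement, however, you take a genuinely different and noticeably longer route. The paper dispatches it in one step: every subfield of $\mathfrak{H}_R$ strictly larger than $\BQ$ is ramified at one of $\ell_0, q_1,\ldots,q_r$, while $\BQ(E[2^\infty])/\BQ$ is ramified only at primes dividing $2C$; since these sets of primes are disjoint, $\mathfrak{H}_R \cap \BQ(E[2^\infty]) = \BQ$, giving $E(\mathfrak{H}_R)[2^\infty] = E(\BQ)[2^\infty]$, and then the supersingular prime bounds the latter by $E(\BQ)[2]$. Your argument instead (i) uses $f([0])\notin 2E(\BQ)$ to pin down $|E(\BQ)[2]|=2$, (ii) computes the residue field of $\mathfrak{H}_R$ at a prime above $q_1$ and the Frobenius action on $E[4]$ to bound $E(\mathfrak{H}_R)[2^\infty]$ by $E[2]$, and (iii) then runs the ramification argument, but only for $E[2]$. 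Step (ii) is correct but entirely avoidable: the same ramification observation you use for $\BQ(E[2])$ in step (iii) works verbatim for $\BQ(E[2^\infty])$, and that is exactly what the paper does. Your approach does have the minor by-product of showing explicitly that $E(\BF_{q_1^2})[2^\infty]=E[2]$, but for the lemma itself the paper's single ramification argument is both shorter and does not require invoking the hypothesis $f([0])\notin 2E(\BQ)$.
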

\begin{proof} Let $q$ denote any of the primes $q_1, \ldots, q_r$, and let $h$ denote the class number of $K$.
Since $q$ is inert in $K$, the ring class field $H_q$ of conductor $q$ has degree $(q+1)h$ over $K$, and $ord_2((q+1)h) = 1$
because $h$ is odd, and $q \equiv 1 \mod 4$. Hence $H_q$ contains a unique quadratic extension of $K$, which must be unramified
outside of $q$, and so must be equal to $K(\sqrt{q})$ because $ q \equiv 1 \mod 4$. Then the degree of $H_q$ over $K(\sqrt{q})$
is equal to $(q+1)h/2$, which is odd. Since $H_R$
is the compositum of all of the $H_{q}$ for the primes $q$ dividing $R$, the first assertion of the lemma follows easily. For the second assertion,
we note that $E(\mathfrak H_R)[2^\infty] = E(\BQ)[2^\infty]$ because at least one of the primes $\ell_0, q_1, \ldots, q_r$ must ramify in
every subfield of $\mathfrak H_R$ which is strictly larger than $\BQ$, and only the primes dividing $2C$ may ramify in the field $\BQ(E[2^\infty])$.
Then we use that $E$ has a sensitive supersingular prime to conclude that $E(\BQ)[2^\infty] = E(\BQ)[2]$.
\end{proof}
\begin{lem} Let $\mathfrak P(\mathfrak H_R)$ be the set of conjugates of the point $P_R$ under the action of the Galois group of $H_R$ over $\mathfrak H_R$. Then we have the equality of sets
\begin{equation}\label{h7}
w_{C} \mathfrak P(\mathfrak H_R) = \tau \mathfrak P(\mathfrak H_R),
\end{equation}
where $\tau$ denotes complex conjugation.
\end{lem}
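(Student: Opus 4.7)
The plan is to reduce the set-theoretic equality to a single assertion in $\Pic(\mathcal O_R)$, which I can then verify by genus theory. First, I would observe that because $w_C$ is defined over $\BQ$ it commutes with every element of $\Gal(H_R/K)$, whence $w_C\mathfrak P(\mathfrak H_R)=\Gal(H_R/\mathfrak H_R)\cdot w_CP_R$. The field $\mathfrak H_R=K(\sqrt{q_1},\ldots,\sqrt{q_r})$ is Galois over $\BQ$, so $\tau$ normalizes $\Gal(H_R/\mathfrak H_R)$; combined with the dihedral relation $\tau\sigma\tau^{-1}=\sigma^{-1}$ on $\Gal(H_R/K)$, this gives $\tau\mathfrak P(\mathfrak H_R)=\Gal(H_R/\mathfrak H_R)\cdot \tau P_R$. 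Hence the lemma reduces to showing that $w_CP_R$ and $\tau P_R$ lie in a common $\Gal(H_R/\mathfrak H_R)$-orbit. Applying \eqref{h2} to $P_R$, we already know $w_CP_R=\sigma\cdot \tau P_R$ for some $\sigma\in\Gal(H_R/K)$, and it remains only to show $\sigma\in\Gal(H_R/\mathfrak H_R)$.

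Next, I would identify $\sigma$ explicitly. Writing the Heegner points of conductor $R$ in the standard way as triples $(\mathcal O_R,\mathfrak C_R,[\mathfrak a])$ with $[\mathfrak a]\in\Pic(\mathcal O_R)$, and using the standard formulas for the actions of Atkin--Lehner and complex conjugation, a direct computation shows that $\sigma$ corresponds, under the Artin isomorphism $\Pic(\mathcal O_R)\cong\Gal(H_R/K)$, to the class $[\mathfrak C_R]^{\pm1}$ (the inverse ambiguity is irrelevant since we only care about membership in a subgroup).

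Finally, I would invoke genus theory. By Lemma \ref{sub}, $\Gal(H_R/\mathfrak H_R)$ has odd order, and since $\Gal(\mathfrak H_R/K)=(\BZ/2)^r$ has exponent $2$, these two facts force $\Gal(H_R/\mathfrak H_R)$ to equal the subgroup of squares in $\Pic(\mathcal O_R)$. By class field theory applied to the $r$ quadratic subextensions $K(\sqrt{q_i})/K$, the genus characters cutting out this subgroup are $\chi_{q_i}([\mathfrak a])=\left(\frac{N\mathfrak a}{q_i}\right)$. Now $(R,C)=1$ gives $\mathcal O_R/\mathfrak C_R\xrightarrow{\sim}\mathcal O/\mathfrak C\cong\BZ/C\BZ$, so $N(\mathfrak C_R)=C$; and by hypothesis $C$ is a square modulo each $q_i$ dividing $R$---this is condition (iii) in the definition of a sensitive supersingular prime for $q_1$, and is built into the definition of $\Sigma_r$ for $q_2,\ldots,q_r$. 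Therefore every $\chi_{q_i}([\mathfrak C_R])$ equals $1$, the class $[\mathfrak C_R]$ is a square in $\Pic(\mathcal O_R)$, and $\sigma\in\Gal(H_R/\mathfrak H_R)$.

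The main obstacle I foresee is the middle step: carefully tracking conventions in the Heegner-point parameterization so that $\sigma$ is pinned down as the Artin symbol of $\mathfrak C_R$ (up to inverse), rather than of some other ideal class whose genus characters we would not be able to control via the hypothesis on $\left(\frac{C}{q_i}\right)$.
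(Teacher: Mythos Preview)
Your proposal is correct and takes essentially the same approach as the paper: identify the element carrying $\tau P_R$ to $w_C P_R$ as the Artin symbol $\sigma_{\mathfrak C}$ of $\mathfrak C$ (the paper simply cites this as the well-known Gross relation, which resolves the obstacle you flag), and then verify $\sigma_{\mathfrak C}\in\Gal(H_R/\mathfrak H_R)$ by checking it fixes each $\sqrt{q_i}$, which in both arguments reduces to $\left(\frac{C}{q_i}\right)=1$. The only cosmetic difference is that the paper checks the restriction of $\sigma_{\mathfrak C}$ to $\BQ(\sqrt{q_i})$ directly via the Artin symbol of $N_{K/\BQ}\mathfrak C=C\BZ$, whereas you phrase the same computation as a genus-character calculation showing $[\mathfrak C_R]$ is a square in $\Pic(\mathcal O_R)$.
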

To establish this lemma, we use the well known fact that $w_C(P_R) = (P_R)^{\sigma_\mathfrak C}$, where $\sigma_\mathfrak C$ denotes the Artin symbol of $\mathfrak C$ for the extension $H_R/K$. But, for each prime $q$ dividing $R$, the restriction of $\sigma_\mathfrak C$ to $\BQ(\sqrt{q})$ is the Artin symbol of $C\BZ = N_{K/\BQ}\mathfrak C$ for this extension, and this Artin symbol fixes $\sqrt{q}$
because $q$ is a square modulo $C$.

\medskip

For each positive divisor $D$ of $R$, let $\chi_D$ be the character attached to the extension $K(\sqrt{D})/K$, and define the imprimitive Heegner point $z_D$ in $E(K(\sqrt{D}))$ by
$$
z_D = \sum_{\sigma\in \Gal(H_R/K)}\chi_D(\sigma)f(P_R)^\sigma.
$$
Obviously $z_R = y_R$, but for proper divisors $D$ of $R$, we have the following lemma.
\begin{lem} For all positive divisors $D$ of $R$, define  $b_D = \prod_{q | R/D} a_q$, where the product is taken over all primes $q$ dividing $R/D$. Then we have
\begin{equation}\label{h9}
z_D = b_Dy_D.
\end{equation}
In particular, $b_D = 0$ whenever $q_1$ does not divide $D$, because $a_{q_1} = 0$.
\end{lem}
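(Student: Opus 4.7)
The plan is to derive \eqref{h9} from the compatibility of Heegner points under norms in ring class field towers, combined with the Hecke-equivariance of the modular parametrization $f$.

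First, I would use Lemma \ref{sub} to observe that $K(\sqrt{D}) \subseteq \mathfrak{H}_D \subseteq \mathfrak{H}_R \subseteq H_R$, so the quadratic character $\chi_D$ is trivial on $\Gal(H_R/H_D)$ and factors through $\Gal(H_D/K)$. Thus the defining sum for $z_D$ can be reorganized by tracing down from $H_R$ to $H_D$:
$$
z_D = \sum_{\bar{\sigma}\in\Gal(H_D/K)} \chi_D(\bar{\sigma})\,\tilde{\sigma}\bigl(\RTr_{H_R/H_D} f(P_R)\bigr),
$$
where $\tilde{\sigma}$ is any lift of $\bar{\sigma}$ to $\Gal(H_R/K)$.

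Next, I would compute $\RTr_{H_R/H_D} f(P_R) = f(\RTr_{H_R/H_D} P_R)$ by iterating the classical norm relation for Heegner points at primes inert in $K$. Specifically, if $q$ is inert in $K$ with $q \nmid DC$, then $\RTr_{H_{Dq}/H_D} P_{Dq} = T_q \cdot P_D$ as a divisor on $X_0(C)$, where $T_q$ is the Hecke correspondence. Since $\phi$ is an eigenform for $T_q$ with eigenvalue $a_q$ and $f([\infty])=O$, the modular parametrization satisfies $f(T_q \cdot P) = a_q f(P)$. Each prime dividing $R$ is inert in $K$ (this is how $q_1$ was chosen by hypothesis, and $q_2,\dots,q_r$ by membership in $\Sigma_r$) and coprime to $C$, so iterating over the primes $q \mid R/D$ yields
$$
\RTr_{H_R/H_D} f(P_R) = \left(\prod_{q \mid R/D} a_q\right) f(P_D) = b_D\,f(P_D).
$$
Substituting this back produces $z_D = b_D \sum_{\bar{\sigma}\in\Gal(H_D/K)} \chi_D(\bar{\sigma}) f(P_D)^{\bar{\sigma}} = b_D y_D$. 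For the final assertion, note that $q_1$ is a good supersingular prime with $q_1 \geq 5$, so $a_{q_1}=0$; if $q_1 \nmid D$ then $q_1 \mid R/D$, and $a_{q_1}=0$ appears as a factor of $b_D$, forcing $b_D=0$.

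The argument is essentially bookkeeping: the two substantive inputs (the norm relation for Heegner points at inert primes, and the Hecke-equivariance of $f$) are standard. The only point requiring mild care is the passage from the divisor-level identity $\RTr_{H_{Dq}/H_D} P_{Dq} = T_q P_D$ to its image under $f$, which I would handle by viewing $f$ as the composite $X_0(C) \to J_0(C) \to E$ intertwining $T_q$ with multiplication by $a_q$ on $E$, using $f([\infty])=O$ to absorb the degree term.
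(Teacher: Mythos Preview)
Your proposal is correct and follows essentially the same approach as the paper: the paper's proof simply invokes Kolyvagin's norm relation $\RTr_{H_{Mp}/H_M} f(P_{Mp}) = a_p f(P_M)$ for $p$ inert in $K$ and prime to $MC$, together with $K(\sqrt{D}) \subset H_D$, and observes that the lemma follows immediately by iteration. You have written out the bookkeeping (factoring $\chi_D$ through $\Gal(H_D/K)$, reorganizing the sum as a trace, passing through $J_0(C)$) more explicitly, but the substance is identical.
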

\begin{proof} Since $K(\sqrt{D})$ is contained in $H_D$, the assertion follows immediately from the following general fact first observed by Kolyvagin.
Let $M$ be any positive integer prime to $C$, and let $p$ be a prime number with $(p, MC) =1$ and $p$
inert in $K$. Then
$$
Tr_{H_{Mp}/H_M}f(P_{Mp}) = a_pf(P_M).
$$
\end{proof}

\medskip

In order to prove our assertions about the Heegner point $y_R$, it is convenient to assume that $T = f([0])$
is of exact order $2$. This can always be achieved by composing $f$ with multiplication by an odd integer on $E$,
and we shall assume for the rest of the proof that we have done this. Define
$$
\psi_R =\sum_{\sigma\in \Gal(H_R/\mathfrak H_R)}f(P_R)^\sigma.
$$
Recall that $f(P^{w_{C}}) + f(P) = T$ for all points $P$ on
$X_0(C)$ since we have assumed that $L(E,s)$ has root number $+1$.
As $[H_R:\mathfrak H_R]$ is odd by the first of the above lemmas,
and $T$ is a point in $E(\BQ)$ of order $2$, we deduce immediately
from the second of the above lemmas that
\begin{equation}\label{h8}
\bar{\psi}_R + \psi_R= T.
\end{equation}
Suppose first that
$r=1$, so that $R = q_1$, and let $\sigma$ denote the non-trivial element of $Gal(K(\sqrt{q_1})/K)$. Then $y_R = \psi_R - \sigma(\psi_R)$. Since $a_{q_1} = 0$, it follows from \eqref{h9} with $D=1$ that $\psi_R + \sigma(\psi_R) = 0$, whence we conclude that $y_R = 2\psi_R$. It now follows from \eqref{h8} and the fact that $2T = 0$ that $\bar{y}_R + y_R = 0$. In view of this last equation and the fact that $\sigma(y_R) + y_R = 0$, we see that $y_R$ lies in $E(\BQ(\sqrt{-\ell_0R}))^{-}$.  Suppose finally that $y_R = 2w + t$
for some $w$ in $E(\BQ(\sqrt{-\ell_0R}))^-$ and a torsion point $t$.
Since $E(\mathfrak H_R)[2^\infty] = E(\BQ)[2]$, it follows that $\psi_R= w + t'$
for some $t'$ in $E(\BQ)[2]$, whence $\bar{\psi}_R + \psi_R = 0$,
contradicting \eqref{h8}. This proves our theorem when $r=1$. Now
suppose that $r > 1$. It is  easy to see that we have the identity
\begin{equation}\label{h6}
y_R+\sum_{D|R, D\neq R} z_D=2^r\psi_R.
\end{equation}
If $D \neq R$, we can write $b_D= 2^re_D$ for some integer $e_D$
because of condition (ii) in Lemma \ref{fs}. Hence, as $z_D = b_Dy_D$ by the lemma above,
$$y_R=2^ru _R, \quad \text{with}\quad u_R= \psi_R-\sum_{D|R, D\neq R} e_D y_D,$$
and we recall that $e_D = 0$ if $q_1$ does not divide $D$. In particular,
it follows that the class of $y_R$ in
$E(K(\sqrt{R}))/2^rE(K(\sqrt{R}))$ maps to zero in
$E(\mathfrak H_R)/2^rE(\mathfrak H_R)$. But we have the inflation-restriction exact
sequence
$$0\lra H^1(Gal(\mathfrak H_R/K(\sqrt{R})), E(\mathfrak H_R)[2^r])\lra
H^1(K(\sqrt{R}), E[2^r])\lra H^1(\mathfrak H_R, E[2^r]),$$
 and the kernel
on the left of this sequence is killed by $2$, because, as remarked
above, we have that $E(\mathfrak H_R)[2^\infty] = E(\BQ)[2]$. It follows that
$2y_R\in 2^rE(K(\sqrt{R}))$. Hence $y_R= 2^{r-1}y+t$ for some
$y\in E(K(\sqrt{R}))$ and $t \in E(\BQ)[2]$. Also, we then have
$y=2u_R+s$ for some $s\in E(\BQ)[2]$. We now show that $y$ belongs
to $E(\BQ(\sqrt{-\ell_0R}))^-$. Let $\sigma$ be an element of
$\Gal(\mathfrak H_R/K)$ which maps $\sqrt{q_1}=-\sqrt{q_1}$, and fixes all
$\sqrt{q_i}$ for $2\leq i\leq r$. We claim that
\begin{equation}\label{h5}
\sigma(y) + y= 0.
\end{equation}
Since $y = 2u_R +s$, and $u_R = \psi_R - \sum_{D|R, D\neq R}e_Dy_D$, it suffices to show that
\begin{equation}\label{h11}
\sigma(\psi_R) + \psi_R = 0, \,  \sigma(y_D) + y_D = 0,
\end{equation}
with the latter equation holding for all positive divisors $D$ of $R$, which are not equal to $R$ and have $e_D \neq 0$.
Now the first equation in \eqref{h11} holds because $\sigma(\psi_R) + \psi_R$ is equal to the trace from $H_R$
to $K(\sqrt{q_2}, \ldots, \sqrt{q_r})$ of $f(P_R)$, and this is zero because $a_{q_1} = 0$.
If $e_D \neq 0$, then $D$ is a positive divisor of $R$ which is divisible by $q_1$. Thus the restriction of $\sigma$ to
$K(\sqrt{D})$ must be the non-trivial element of the Galois group of
this field over $K$, and so $y_D = v_D - \sigma(v_D)$, where $v_D = Tr_{H_D/K(\sqrt{D})}f(P_D)$. But
since $q_1$ divides $D$ and $a_{q_1} = 0$, we must have $v_D + \sigma(v_D) = 0$,
and so $y_D = 2v_D$, whence also $\sigma(y_D) + y_D = 0$, completing the proof of \eqref{h11}
We claim that we also have
\begin{equation}\label{h10}
\bar{y} + y = 0.
\end{equation}
Indeed, we assert that we have
\begin{equation}\label{h12}
\bar{y}_D + y_D = 0,
\end{equation}
for all positive divisors $D$ of $R$, which are not equal to $R$ and have $e_D \neq 0$. For such $D$, we have $y_D = 2v_D$.
But, for such $D$, we have $\bar{\psi}_D + \psi_D = T$, whence it follows that $2(\bar{v}_D + v_D) = 0$, because $K(\sqrt{D})$ is a
subfield of $\mathfrak H_D$. In view of \eqref{h12}, it follows that $\bar{u}_R + u_R = \bar{\psi}_R + \psi_R$, and \eqref{h10} follows easily.
Combining \eqref{h5} and \eqref{h10}, we conclude that $y$ must belong $E(\BQ(\sqrt{-\ell_0R}))^-$, and thus $y_R\in 2^{r-1}
E(\BQ(\sqrt{-\ell_0R}))^-+E(\BQ)[2]$. Suppose finally that
$y_R=2^ry'+t$, with $y'\in E(\BQ(\sqrt{-\ell_0R}))^-$ and $t\in
E(\BQ(\sqrt{-\ell_0R}))_\tor.$ If $m$ denotes an odd integer annihilating
the odd part of $E(\BQ(\sqrt{-\ell_0R}))_\tor$, we would then have
$$m\left(\psi_R-y'-\sum_{D|R, D\neq R}e_Dy_D\right)\in
E(\mathfrak H_R)[2^\infty]=E(\BQ)[2].$$ It follows that $m(\bar{\psi}_R+
\psi_R)=0$, contradicting \eqref{h8}, because $T$ is of order $2$. This completes the proof of
the theorem.

\section{Some classical 2-descents}

At present, we simply do not know how to prove the $2$-part of the
conjecture of Birch and Swinnerton-Dyer for the quadratic twists of
the elliptic curve $A=X_0(49)$, even though this should be the
easiest case to attack by the methods of Iwasawa theory, since every
such twist has complex multiplication by $F$ and has the prime $2$ as
a potentially good ordinary prime. Instead, we shall simply show
that a classical 2-descent argument in the spirit of Fermat (see,
for example, Chapter X, and in particular Prop. 4.9, of \cite{Sil})
establishes some partial results in this direction.

\medskip

In order to carry out the 2-descent, we must work with a new
equation for $A$ and its twists. Making the change of variables $x =
X/4 +2, y = Y/8 - X/8 -1$, we obtain the following equation for $A$:
$$
Y^2 = X^3 + 21X^2 + 112X.
$$
Let $M$ be any square free integer $\neq1$, and let $A^{(M)}$ be the
twist of $M$ by the quadratic extension $\BQ(\sqrt{M})/\BQ$. Then
the curve $A^{(M)}$ will have equation
$$
A^{(M)}: y^{2}=x^{3}+21Mx^{2}+112M^2x,
$$
and, dividing this curve by the subgroup generated by the
point $(0,0)$, we obtain the new curve
$$A^{'(M)}: y^{2}=x^{3}-42Mx^{2}-7M^2x.$$
Of course, $A^{'(M)}$ is the twist of $A' =A'^{(1)}$ by the quadratic extension $\BQ(\sqrt{M})/\BQ$. Explicitly, the isogenies between these two curves, are given by
$$
\phi: A^{(M)}\rightarrow A^{'(M)}, \  (x,y)\mapsto(\frac{y^{2}}{x^{2}},\frac{y(112M^{2}-x^{2})}{x^{2}})
$$
$$
\hat{\phi}: A^{'(M)}\rightarrow A^{(M)}, \  (x,y)\mapsto (\frac{y^{2}}{4x^{2}},\frac{y(-7M^{2}-x^{2})}{8x^{2}}).
$$
We write $S^{(\phi)}(A^{(M)})$ and
$S^{(\hat{\phi})}(A^{'(M)})$ for the classical Selmer groups of
the isogenies $\phi$ and $\hat{\phi}$, which can be described
explicitly as follows. Let $V$ denote the set of all places of
$\BQ$, and let $T_M$ be the set of primes dividing $14M$. Let
$\BQ(2, M)$ be the subgroup of $\BQ^\times/(\BQ^\times)^2$
consisting of all elements with a representative which has even
order at each prime number not in $T_M$. Writing
\begin{equation}\label{d1}
C_{d}: dw^{2}= 64 - 7\left(\frac{M}{d}z^2 + 3\right)^2,
\end{equation}
then $S^{(\phi)}(A^{(M)})$ can be naturally identified
with the subgroup of all $d$ in $\BQ(2,M)$ such that $C_d(\BQ_v)$ is
non-empty for $v=\infty$ and $v$ dividing $14M$. Similarly, writing
\begin{equation} \label{d2}
C'_{d} : dw^{2}=  1 + 7\left(\frac{2M}{d}z^2 + 3\right)^2
\end{equation}
then $S^{(\hat{\phi})}(A^{'(M)})$ can be naturally
identified with the subgroup of all $d$ in $\BQ(2,M)$ such that
${C'}_{d}(\BQ_v)$ is non-empty for $v=\infty$ and $v$ dividing
$14M$. Note that  $-7 \in S^{(\phi)}(A^{(M)})$ because it is the
image of the point $(0, 0)$ in $A^{'(M)}(\BQ)$, and similarly $7 \in
S^{(\hat{\phi})}(A^{'(M)})$ (see Proposition 4.9 of \cite{Sil}).
It will also be convenient for us to use the following
notation. If $D$ is any odd square free integer, we define $D_{+}$
(resp. $D_{-}$) to be the product of the primes dividing $D$, which
are $\equiv 1 \mod 4$ (resp. which are $\equiv 3 \mod 4$). In what
follows, we shall always assume that $M$ is prime to $7$, and we
will then write
\begin{equation}\label{con1}
M = \epsilon 2^\delta RN,
\end{equation}
where $\epsilon = \pm 1$, $\delta = 0, 1$, and $R$ (resp. $N$)
denotes the product of the prime factors of $M$ which are inert
(resp. split) in the field $F = \BQ(\sqrt{-7})$. To simplify the
statements of our results, we shall define a divisor $d$ of $M$ to
be {\it Confucian} if it satisfies the following condition at primes
$p$ dividing ${N_+}$:
\begin{equation}\label{con}
\left(\frac{d}{p}\right)=1 \, \textrm {when} \, p \, \textrm{divides} \,
N_+/(d, N_+), \, \textrm{and} \, \left(\frac{M/d}{p}\right)= \left(\frac{-7}{p}\right)_4
\, \textrm{when } \, p \,  \textrm{divides} \,  (d, N_+).
\end{equation}

\begin{prop}\label{descent1} Let $M$ be a square free integer prime to $7$.
Then $S^{(\phi)}(A^{(M)})$ consists of the classes in $\BQ(2,M)$
represented by all integers $d, -7d$ satisfying the following
conditions:
\begin{enumerate}
\item $d$ divides $2^\delta R_{-}N _{+}$.
\item When $M \equiv 1 \mod 4$, we have $d \equiv 1 \mod 4$, and when $M \equiv 3 \mod 4$, we have $d \equiv 1 \mod 8.$
\item When $M \equiv 6 \mod 8$, we have $d \equiv 1 \mod 8$, and when $M \equiv 2 \mod 8$, we have either $d \equiv 1 \mod 8$ or $d \equiv 5M \mod 16$.
\item We have $\left(\frac{d}{p}\right)=1$ for all primes $p$ dividing $N_-$.
\item $d$ is a Confucian divisor of $M$.
\end{enumerate}
\end{prop}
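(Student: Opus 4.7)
The plan is to analyze, place by place, the condition that the homogeneous space $C_d$ of \eqref{d1} admits a $\BQ_v$-rational point, for every $v \in \{\infty\} \cup \{p \mid 14M\}$. Since $-7 \in S^{(\phi)}(A^{(M)})$ (as the image of the $2$-torsion point $(0,0) \in A^{'(M)}(\BQ)$), the Selmer group is stable under the involution $d \mapsto -7d$, so it suffices to describe those Selmer elements represented by square-free integers $d$ coprime to $7$; the remaining classes $-7d$ account for the ``$d, -7d$'' phrasing. The archimedean and $7$-adic places are handled easily: setting $z=0$ in \eqref{d1} gives $dw^2 + 63 = 64$, which is solvable over $\BR$ for every sign of $d$, and the same substitution gives $dw^2 \equiv 1 \pmod 7$, which Hensel-lifts whenever $7 \nmid d$.

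For odd primes $p \mid M$, I would split the analysis into the subcases $p \nmid d$ and $p \mid d$. In the subcase $p \nmid d$, there are two families of potential $\BQ_p$-solutions: integral $(z,w) \in \BZ_p^2$, which reduce modulo $p$ to $dw^2 \equiv 1$ and hence require $\bigl(\tfrac{d}{p}\bigr) = 1$; and solutions with $w$ of negative valuation, which a short calculation shows reduce to a congruence of the shape $dw_0^2 \equiv -7 u^2 z_0^4 \pmod p$ and hence require $\bigl(\tfrac{d}{p}\bigr) = \bigl(\tfrac{-7}{p}\bigr)$. Thus $C_d(\BQ_p) \neq \emptyset$ iff $\bigl(\tfrac{d}{p}\bigr)$ equals $1$ or $\bigl(\tfrac{-7}{p}\bigr)$, which is automatic when $p \mid R$ (since $\bigl(\tfrac{-7}{p}\bigr) = -1$) but forces $\bigl(\tfrac{d}{p}\bigr) = 1$ when $p \mid N$ (since $\bigl(\tfrac{-7}{p}\bigr) = +1$); this yields condition (4) at $p \mid N_-$ and the first half of the Confucian condition at $p \mid N_+$. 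In the subcase $p \mid d$, writing $d = pd'$ and simplifying shows that local solvability forces $7$ to be a square modulo $p$; since $7$ is a square modulo $p$ precisely when $p \mid R_-$ or $p \mid N_+$, this rules out $p \mid d$ for $p \mid R_+$ and $p \mid N_-$, which together with the $7$-adic analysis establishes condition (1).

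The main obstacle is the second half of the Confucian condition at primes $p \mid (d, N_+)$. Here $p \equiv 1 \pmod 4$ splits in $F$, so both $7$ and $-7$ are squares modulo $p$, and the equation $(uz^2+3)^2 \equiv 64/7 \pmod p$ (obtained after writing $d = pd'$) splits into two branches $uz^2 + 3 \equiv \pm 8/\sqrt{7}$; one must track which branch lifts to $\BQ_p$ and whether the resulting $z^2$ is a square modulo $p$. Re-expressing the outcome as a Hilbert symbol identity, local solvability becomes equivalent to $\bigl(\tfrac{M/d}{p}\bigr) = \bigl(\tfrac{-7}{p}\bigr)_4$; the quartic residue symbol arises because the computation internally takes a square root of $-7$ modulo $p$ and then asks whether a further related element is again a square, which is precisely the defining feature of the quartic residue symbol at a split prime.

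Finally, conditions (2) and (3) at $v = 2$ follow from Hensel's lemma applied at $2$-adic precision $8$ (when $M$ is odd) or $16$ (when $2 \parallel M$), analyzing \eqref{d1} case-by-case according to $M \pmod{16}$ and $d \pmod{16}$; the dichotomy in (3) between $M \equiv 2$ and $M \equiv 6 \pmod 8$ reflects whether $d$ lies in the kernel of a certain quadratic character modulo $16$. These $2$-adic computations are routine but lengthy, and the heart of the proof remains the quartic-residue identification at primes $p \mid N_+$; the other steps are direct applications of Hensel's lemma combined with the splitting behavior of $-7$ modulo odd primes.
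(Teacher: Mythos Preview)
Your place-by-place analysis is exactly the paper's approach, including the quartic-residue identification at primes $p \mid (d, N_+)$. One slip: at $v=7$, setting $z=0$ gives $dw^2 \equiv 1 \pmod 7$, which lifts only when $\bigl(\tfrac{d}{7}\bigr)=1$, not for every $d$ coprime to $7$; the paper records this condition explicitly. It happens that $\bigl(\tfrac{d}{7}\bigr)=1$ is already forced by conditions (1)--(3) (the congruences there make the odd part of $d$ congruent to $1$ modulo $4$, hence $d$ has an even number of $R_-$-prime factors; combined with $\bigl(\tfrac{2}{7}\bigr)=1$, $\bigl(\tfrac{p}{7}\bigr)=1$ for $p \mid N_+$, and $\bigl(\tfrac{q}{7}\bigr)=-1$ for $q \mid R_-$), so your omission does not break the proposition, but you should state the correct $7$-adic condition and then note its redundancy.
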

\begin{cor} \label{cd1}  Assume that $M$ is a square free integer, prime to $7$,
with $M\equiv 1\mod 4$. Then $S^{(\phi)}(A^{(M)})$ consists of the
classes in $\BQ(2, M)$ represented by integers $d, -7d$, where $d$
runs over all integers such that (i) $d\equiv 1\mod 4$, (ii) $d$
divides $R_-N_+$, (iii) $\left(\frac{d}{p}\right)=1$ for all primes $p$
dividing $N_-$, and (iv) $d$ is a Confucian divisor of $M$.
\end{cor}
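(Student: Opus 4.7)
The plan is to obtain Corollary \ref{cd1} as a direct specialization of Proposition \ref{descent1} to the case $M \equiv 1 \pmod 4$. The only real step is to compare each of the five conditions in the proposition with the four conditions in the corollary, after substituting the hypotheses about $M$.

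First I would observe that $M \equiv 1 \pmod 4$ forces $M$ to be odd, so in the decomposition $M = \epsilon 2^\delta R N$ of \eqref{con1} we have $\delta = 0$. Plugging $\delta = 0$ into condition (1) of Proposition \ref{descent1} yields $d \mid R_{-} N_{+}$, which is exactly condition (ii) of the corollary. Condition (2) of the proposition applied with $M \equiv 1 \pmod 4$ gives $d \equiv 1 \pmod 4$, which is (i). Condition (3) concerns $M \equiv 2, 6 \pmod 8$ and is therefore vacuous when $M$ is odd. Conditions (4) and (5) of the proposition transfer verbatim to (iii) and (iv) of the corollary. That verification is the whole proof.

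There is no genuine obstacle here: the corollary is bookkeeping, and all of the substantive content lies in Proposition \ref{descent1}. The only thing one has to be mildly careful about is reading off the definition of ``Confucian'' for odd $M$: since $M$ has no factor of $2$ the divisor $d$ of $M$ inherits only the odd conditions on $(d, N_+)$ and $N_+/(d, N_+)$, so clause (5) passes through unchanged.

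If pressed to prove the underlying Proposition \ref{descent1} as well, I would follow the standard Selmer-group-by-descent recipe: identify $S^{(\phi)}(A^{(M)})$ with the $d \in \BQ(2, M)$ such that the homogeneous space $C_d$ of \eqref{d1} is everywhere locally soluble, and work out local solvability place by place at $\infty$, at $2$, at $7$, and at primes dividing $R$ and $N$; local solvability at primes $p \mid N_+$ is what produces the quartic residue condition in \eqref{con}, which is the only point where genuine effort (rather than elementary congruences or Hensel's lemma) is needed. But for Corollary \ref{cd1} itself, no such analysis is required.
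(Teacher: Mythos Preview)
Your proposal is correct and matches the paper's approach exactly: the corollary is stated immediately after Proposition \ref{descent1} with no separate proof, precisely because it is the specialization you describe (set $\delta=0$ so that condition (1) becomes $d\mid R_-N_+$, condition (2) reduces to $d\equiv 1\bmod 4$, condition (3) is vacuous, and (4), (5) carry over verbatim). There is nothing to add.
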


\begin{proof} We recall that $C_d$ denotes the curve \eqref{d1}.
We see immediately that $C_d(\mathbb{R}) \neq \varnothing$, and that
$C_d(\BQ_7)\neq\varnothing $ if and only if
$\left(\frac{d}{7}\right)=1$. We now break up the rest of the
argument into a number of cases.

\medskip

Suppose that $q$ is any prime factor of $R$, and assume first that
$q$ divides $d$. Then we claim that $C_d(\BQ_q) \neq \varnothing$
if and only if $q$ divides $R_-$. Indeed, a point on $C_d$ with
coordinates in $\BQ_q$ must have coordinates in $\BZ_q$, whence it
follows easily that $\left(\frac{7}{q}\right) = 1$, or equivalently $q$ divides
$R_-$. Conversely if $q$ divides $R_-$, let $a$ be an integer such
that $a^2 \equiv 7 \mod q$. Then $(-3a+8)(-3a-8) \equiv -1 \mod q$ and
$-1$ is not a square modulo $q$. Thus one of $-3a+8$ and $-3a-8$ is a
square and one is a non-square modulo $q$. It follows that one of
the two congruences $\frac{aM}{d}z^2 \equiv (-3a \pm 8) \mod q$ must
always be soluble, and so $7(\frac{M}{d}z^2 + 3)^2 \equiv 64 \mod q$
is soluble, giving a point on $C_d$ with coordinates in $\BZ_q$. Now
assume that $q$ does not divide $d$. We will show that always
$C_d(\BQ_q) \neq \varnothing$. Indeed, if
$\left(\frac{d}{q}\right)=1$, the congruence $dw^2 \equiv 1 \mod q$
is soluble, giving a point on $C_d$ with coordinates in $\BZ_q$.
Otherwise $\left(\frac{-7d}{q}\right)=1$, and, taking $w=q^{-1}w_1,
z=q^{-1}z_1$, we obtain the equation $dw_1^2=64 q^2-7(\frac{M}{qd}
z_1^2+3q)^2$, which, on taking $z_1 = 1$ is plainly soluble modulo
$q$, yielding a point in $C_d(\BQ_q)$.

\medskip

Suppose next that $p$ is a prime dividing $N$, and assume first that
$p$ divides $d$. We claim that $C_d(\BQ_p)\neq \varnothing$ if and
only if $p \equiv 1 \mod 4$ and $\left(\frac{M/d}{p}\right)=\left(\frac{-7}{p}\right)_4$.
Indeed, we see easily that $C_d(\BQ_p) \neq \varnothing$ if and only
if $C_d$ has a point with coordinates in $\BZ_p$, and this will be
true if and only if the congruence given by looking at the equation
for $C_d$ modulo $p$ has a solution. It follows immediately that we
must have $(\frac{7}{p}) = 1$, whence we can assume that $p \equiv
1 \mod 4$ if $C_d(\BQ_p)\neq \varnothing$. We can therefore find
integers $e$ and $b$ such that $e^2 \equiv -7 \mod p$ and $b^2
\equiv -1 \mod p$.  We then have $(\frac{2b}{p})=1$, and $2b(3e-8b)
\equiv (3+eb)^2 \mod p$, whence $8b-3e$ is a square modulo $p$, and
so the same is true because $(8b + 3e)(8b - 3e) \equiv -1
\mod p$. Looking at the equation for $C_d$ modulo $p$, we conclude
easily that there will be a point modulo $p$ if and only if one of
the congruences
\begin{equation}\label{d3}
\frac{M}{d}z^2  \equiv -e(3e \pm 8b) \ \mod p
\end{equation}
is soluble. But since $-(3e \pm 8b)$ is a square modulo $p$, the
congruence \eqref{d3} will be soluble if and only if
$\left(\frac{M/d}{p}\right) = \left(\frac{e}{p}\right) = \left(\frac{-7}{p}\right)_4$,
proving our claim. Now assume that $p$ does not divide $d$. We claim that
$C_d(\BQ_p)\neq \varnothing$ if and only if $\left(\frac{d}{p}\right) = 1$.
Indeed, if $\left(\frac{d}{p}\right) = 1$, the congruence given by putting
$z=0$ in the equation of $C_d$ modulo $p$ is clearly soluble, and
this gives a point on $C_d$ with coordinates in $\BZ_p$. Conversely,
if there is a point on $C_d$ with coordinates in $\BZ_p$, it
follows immediately that $\left(\frac{d}{p}\right) = 1$. On the other hand, if
there is a point $(w, z)$ on $C_d$ with non-integral coordinates, we
can write $w=p^{-m}w_1, z=p^{-n}z_1$ with $m, n>0$ and $w_1, z_1\in
\BZ_p^\times$. It then follows that $m=2n-1$ and $dw_1^2=64
p^{2m}-7(\frac{M}{pd}z_1^2+3p^m)^2$. Taking this last equation
modulo $p$, we again conclude that $\left(\frac{d}{p}\right)=1$, as
required.

\medskip

Finally, we must determine when
\begin{equation}\label{d4}
C_d(\BQ_2) \neq \varnothing.
\end{equation}
We will show that \eqref{d4} holds for $d$ if and only if for $d$
even we have $d/2\equiv 1\mod 4$, and $M/d\equiv 5\mod 8$, and for
$d$ odd, we have either $d\equiv 1\mod 8$ or $d\equiv 5\mod 8$ and
$M/d\equiv 1\mod 4$. Suppose first that $2$ divides $d$. Then
\eqref{d4} will hold if and only if $M$ is even and there is a point
$(w, z)$ on $C_d$ with $w\in \BZ_2, z\in \BZ_2^\times$. It would
then follow that $\frac{M}{d} z^2+3\equiv 8\mod 16$,  and so the
equation  for $C_d$ becomes
$$
d\left(\frac{w}{8}\right)^2= 1-7\left(\frac{\frac{M}{d} z^2+3}{8}\right)^2 \equiv 2 \ \mod 8.
$$
If this congruence is soluble, then $d/2\equiv 1\mod 4$ and
$M/d\equiv 5\mod 8$. Conversely, Assume that $d/2\equiv 1\mod 4$,
and $M/d\equiv 5\mod 8$. Then the above congruence can be seen to be
soluble, whence also $C_d(\BQ_2)\neq \varnothing$, by noting that
$z^2\equiv c\mod 64$ is soluble for any $c\equiv 1\mod 8$. Next
suppose that $2$ does not divide $d$. If $d\equiv 1\mod 8$, we
claim that \eqref{d4} is always valid. Indeed, taking the equation
for $C_d$ modulo $8$, this congruence clearly has a solution if we
take $z=0$, showing that \eqref{d4} holds. Now assume that
$d\nequiv 1\mod 8$. We first show that there is no point on $C_d$
whose coordinates do not lie in $\BZ_2$. Suppose the contrary, and
put $w=2^{-m}w_1, z=2^{-n}z_1$ with $m, n>0$ and $w_1, z_1\in
\BZ_2^\times$, and let $M=2^\delta M_1$, so that $M_1$ is odd. Then
the equation for $C_d$ becomes
$$
2^{-2m}dw_1^2=2^6-7\left(\frac{M_1}{d}2^{\delta-2n}
z_1^2+3\right)^2.
$$
It follows that $m=2n-\delta$ and $d\equiv 1\mod 8$, which is a
contradiction. In order to investigate points on $C_d$ with
coordinates in $\BZ_2$, note that it is easy to see that $wz \neq
0$, and we put $w=2^m w_1, z=2^nz_1$ for some $w_1, z_1\in
\BZ_2^\times$, obtaining the equation becomes
$$2^{2m}dw_1^2=2^6-7\left(\frac{M_1}{d}2^{\delta+2n}
z_1^2+3\right)^2.$$ If $\delta+2n>0$, then $m=0$ and it follows that
$d\equiv 1\mod 8$, which is a contradiction. Thus we have
$\delta=n=0$ and our equation becomes
$$
dw_1^2=\frac{2^6-7\left(\frac{M}{d}z^2+3\right)^2}{2^{2m}}.
$$
If $M/d\equiv 3\mod 4$, then $m=1$ and $d\equiv 1\mod 8$, which is
again a contradiction. Thus we must have $M/d\equiv 1\mod 4$. If
$M/d\equiv 1\mod 8$, then necessarily $m=2$, and so we have
$$
dw_1^2=4-7\cdot 4^{-2}(\frac{M}{d}z^2+3)^2\equiv 5 \ \mod 8,
$$
which is a contradiction. If $M/d\equiv 5\mod 8$, then we can choose
$z$ such that $ord_2(\frac{M}{d}z^2+3)=4$ and $m=3$, and so we must have
$d\equiv 5\mod 8$. Conversely, if $d \equiv 5 \mod 8$ and $m=3$, it
follows that $C_d(\BQ_2) \neq \varnothing$. Putting together all of
the above results, the proof of Proposition \ref{descent1} is
complete.
\end{proof}

\medskip

\begin{prop}\label{descent2} Let $M$ be a square free integer prime to $7$. Then
$S^{(\hat{\phi})}(A^{'(M)})$ consists of all classes in $\BQ(2,M)$
represented by integers $d, 7d$ satisfying:
\begin{enumerate}
\item $d > 0$ and $d$ divides $2N$.
\item  If $M \equiv 1 \mod 4$, then $d$ is odd, and if $M\equiv 2 \mod 8$,
we have either $d\equiv \pm 1 \mod 8$ or $d \equiv \pm 3M \mod 16$.
\item $(\frac{d}{q})=1$ for all primes $q$ dividing $R_-$.
\item $d$ is a Confucian divisor of $M$.
\end{enumerate}
\end{prop}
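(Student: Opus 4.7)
The strategy parallels the proof of Proposition \ref{descent1}: we identify $S^{(\hat\phi)}(A'^{(M)})$ with the set of classes $[d] \in \BQ(2,M)$ for which the homogeneous space $C'_d$ of \eqref{d2} has a $\BQ_v$-point at every $v\mid 14M\infty$, and then determine the local conditions place by place. Since $[7]\in S^{(\hat\phi)}(A'^{(M)})$ is the image of the $2$-torsion point $(0,0)\in A^{(M)}(\BQ)$, the Selmer group is a union of pairs $\{[d],[7d]\}$, so it suffices to list one representative $d$ per pair.

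At the archimedean place the right-hand side of \eqref{d2} is strictly positive, so $d>0$. At $p=7$ the right-hand side is $\equiv 1 \mod 7$ for $z\in\BZ_7$, and non-integral $z$ are ruled out by a parity-of-valuation argument; thus solubility requires $\left(\frac{d}{7}\right)=1$, which is automatic for any $d\mid 2N$, since every $p\mid N$ splits in $F$ and is therefore a square modulo $7$, as is $2$. The decisive restriction at the inert primes $q\mid R$ is that $q\mid d$ is impossible: reducing \eqref{d2} modulo $q$ would force $\left(\frac{-7}{q}\right)=1$, contradicting $q$ inert in $F$. Hence $d\mid 2N$, giving condition (1). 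For $q\mid R$ with $q\nmid d$, the two natural branches (integral $z$, and $z$ with a $q$-adic pole) yield solubility equivalent to $\left(\frac{d}{q}\right)=1$ or $\left(\frac{7d}{q}\right)=1$; these combine to force $\left(\frac{d}{q}\right)=1$ exactly when $q\mid R_-$ (since $\left(\frac{7}{q}\right)=1$ there) and to impose no condition when $q\mid R_+$, giving condition (3).

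At a split prime $p\mid N$, fix $e\in\BZ$ with $e^2\equiv -7\mod p$. For $p\nmid d$, the two branches give solubility iff $\left(\frac{d}{p}\right)=1$ or $\left(\frac{7d}{p}\right)=1$, which forces $\left(\frac{d}{p}\right)=1$ when $p\mid N_+$ and imposes nothing when $p\mid N_-$; this is the first clause of the Confucian condition. For $p\mid d$, reducing \eqref{d2} modulo $p$ gives $z^2\equiv \frac{d(-3\pm e^{-1})}{2M}\mod p$, and the product of the two candidates has square class $[1/7]$. The identity $(-3+e)^2=2(1-3e)$, which is immediate from $e^2=-7$, yields $[1-3e]=[2]$, so the common square class of the two candidates (when they agree, namely for $p\mid N_+$) equals $[Md/e]$, and solubility reduces to $\left(\frac{M/d}{p}\right)=\left(\frac{-7}{p}\right)_4$; this is the second clause of the Confucian condition (4). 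For $p\mid N_-$ the two candidates lie in opposite square classes and solubility is automatic, so no extra condition is imposed.

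The $2$-adic analysis is the main technical obstacle. It proceeds along the same lines as the $2$-adic step in the proof of Proposition \ref{descent1}, but applied to $C'_d$. One splits into the cases $2\mid d$ and $2\nmid d$, writes $w=2^m w_1$, $z=2^n z_1$ with $w_1,z_1\in\BZ_2^\times$ and $M=2^\delta M_1$, and matches $2$-adic valuations on the two sides to constrain $m$ and $n$. Working modulo $8$ and $16$, using that a $2$-adic unit is a square iff it is $\equiv 1\mod 8$, one finds that $M$ odd forces $d$ odd, while $M\equiv 2\mod 8$ admits precisely the two branches $d\equiv\pm 1\mod 8$ and $d\equiv\pm 3M\mod 16$ of condition (2). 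Assembling the local conditions at $\infty$, $2$, $7$, and the primes of $R$ and $N$ then yields the description of $S^{(\hat\phi)}(A'^{(M)})$ stated in the proposition.
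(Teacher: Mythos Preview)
Your approach is the same as the paper's: identify the Selmer group with the set of $[d]\in\BQ(2,M)$ for which $C'_d$ has local points everywhere, and work prime by prime. Your analysis at $\infty$, at $7$, and at the odd primes dividing $R$ and $N$ is correct and matches the paper essentially line for line (the paper's computation at $p\mid(d,N)$ is phrased slightly differently, via the congruence $\frac{M}{d}(ez)^2\equiv e(-6e\pm2)\bmod p$ and the identity $(-6e+2)\equiv(3-e)^2$, but reaches the same conclusion).

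The one genuine slip is in your $2$-adic sketch. You write that ``$M$ odd forces $d$ odd,'' but this is false: the paper shows that for $M$ odd and $d$ even, $C'_d(\BQ_2)\neq\varnothing$ if and only if $M\equiv 3\bmod 4$. So even $d$ is permitted when $M\equiv 3\bmod 4$, and this is exactly why condition~(2) of the proposition imposes the parity restriction only for $M\equiv 1\bmod 4$. Relatedly, your sketch does not address the residue classes $M\equiv 3\bmod 4$ and $M\equiv 6\bmod 8$ at all; the proposition is silent about them because in those cases the $2$-adic condition turns out to be vacuous, but that vacuity has to be checked, and the paper does so.

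More broadly, the $2$-adic step is where essentially all the work in this proposition lies, and the paper devotes roughly two pages to it, splitting into the four cases ($M$ odd/even)$\times$($d$ odd/even) and, within each, tracking $\ord_2$ of auxiliary quantities such as $\frac{M_1}{d_1}z_1^2+3$ or $1+21M_1z_1^2/d_1$ through several sub-cases. Your outline (``write $w=2^mw_1$, $z=2^nz_1$, match valuations, reduce modulo $8$ and $16$'') is the right shape, but it is not yet a proof: the constraints on $(m,n)$ and the resulting congruences are numerous and do not collapse to a one-line summary. To complete the argument you need to carry out that case analysis in full.
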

\begin{cor} \label{cd2} Assume that $M$ is a square free integer prime to $7$, with
$M\equiv 1\mod 4$. Then $S^{(\hat{\phi})}(A^{'(M)})$ consists of the
classes in $\BQ(2,M)$ represented by integers $d, 7d$ where $d$ runs
over all integers satisfying (i) $d > 0$,  (ii) $d$ divides $N$, (iii)
$\left(\frac{d}{q}\right)=1$ for all primes $q$ dividing $R_-$, and
(iv) $d$ is a Confucian divisor of $M$.
\end{cor}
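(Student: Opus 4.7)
The plan is to derive Corollary \ref{cd2} directly as a specialization of Proposition \ref{descent2} under the additional hypothesis $M \equiv 1 \bmod 4$. With this hypothesis $M$ is odd, so in the decomposition \eqref{con1} we have $\delta = 0$, and in particular $N$ is odd. Condition (2) of Proposition \ref{descent2} then collapses to the single requirement that $d$ be odd, since the alternative clause concerning $M \equiv 2 \bmod 8$ does not apply. Combining this with condition (1), $d > 0$ and $d \mid 2N$, is equivalent to saying that $d$ is a positive divisor of $N$, because every divisor of an odd $N$ is automatically odd. This delivers conditions (i) and (ii) of the corollary. Conditions (3) and (4) of the proposition are insensitive to the sign and $2$-adic behaviour of $M$, so they transfer verbatim as (iii) and (iv).

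Proposition \ref{descent2} asserts that every element of $S^{(\hat{\phi})}(A^{'(M)})$ is represented by $d$ or $7d$ for some $d$ satisfying (1)--(4). Having matched the admissible $d$ under $M \equiv 1 \bmod 4$ with the set described in (i)--(iv), one concludes that the latter set of representatives exhausts the Selmer group, which is the content of the corollary.

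There is no substantive obstacle here: all of the delicate local analysis (solvability of the homogeneous spaces $C'_d$ at each place of $T_M$, and especially at $2$ and at the primes of $N$ where the Confucian condition emerges) is already carried out in the proof of Proposition \ref{descent2}. The present corollary is a bookkeeping step, whose purpose is to record the clean form of $S^{(\hat{\phi})}(A^{'(M)})$ in the technically most important case $M \equiv 1 \bmod 4$; this clean form is what will feed into the $2$-adic valuation estimate for $L^{(alg)}(A^{(M)},1)$ in Theorem \ref{main4}.
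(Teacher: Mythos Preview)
Your proposal is correct and matches the paper's approach exactly: the corollary is stated immediately after Proposition~\ref{descent2} with no separate proof given, precisely because it is the straightforward specialization you describe (under $M\equiv 1\bmod 4$ one has $\delta=0$, condition~(2) forces $d$ odd, and then $d\mid 2N$ with $N$ odd reduces to $d\mid N$). Your reading of the role of this corollary is also accurate.
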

\begin{proof} We recall that $C'_d$ denotes the curve \eqref{d2}.
It is clear that $C'_d(\BR)\neq \varnothing$ if and only if $d>0$,
and that $C'_d(\BQ_7)\neq \varnothing$ if and only if
$\left(\frac{d}{7}\right)=1$.

\medskip

Suppose first that prime $p$ is an odd prime dividing both $d$ and
$M$. Then
\begin{equation}\label{d5}
C'_d(\BQ_p) \neq \varnothing
\end{equation}
if and only if there is a point on $C'_d$ with $w, z$ in $\BZ_p$,
and this latter assertion will hold if and only if the congruence
given by the equation of $C'_d$ modulo $p$ is soluble. In
particular, it follows that a necessary condition for \eqref{d5} to
hold is that $\left(\frac{-7}{p}\right)=1$, which is equivalent to
saying that $p$ is a divisor of $N$. We can assume therefore that
$p$ divides $N$.  As in the proof of the previous proposition, let
$e$ be an integer such that $e^2 \equiv - 7 \mod p$. Then it is easily
seen that the congruence given by the equation of $C'_d$ modulo $p$
will be soluble if and only if the congruence
$$
\frac{M}{d} (ez)^2\equiv e(-6e\pm 2) \ \mod p
$$
is soluble. Note that $(-3e+1)(-3e-1) \equiv -64 \mod p$ and that $(-6e +
2) \equiv (3 - e)^2 \mod p$. Hence \eqref{d5} will always be true
when $p$ divides $N_-$, and it will be true when $p$ divides $N_+$
if and only if
$\left(\frac{M}{d}\right)=\left(\frac{-7}{p}\right)_4$. Suppose next
that $p$ is an odd prime dividing $M$ which does not divide $d$. We
claim that $\eqref{d5}$ is valid if and only if $\left(\frac{d}{p}\right) = 1$
or $\left(\frac{7d}{p}\right) = 1$. Indeed, it is clear that there will be a
point on $C'_d$ with coordinates in $\BZ_p$ if and only if
$\left(\frac{d}{p}\right) = 1.$ For non-integral points on $C'_d$, put
$w=p^{-m}w_1, z=p^{-n} z_1$ with $w_1, z_1 \in \BZ_p^\times$ and $m,
n>0$. Then we must have $m=2n-1$,  and we obtain the new equation
$$
dw_1^2=p^{2m}+7\left(\frac{2M}{pd}z_1^2+3 p^m\right)^2.
$$
This equation is clearly soluble modulo $p$, if and only if
$\left(\frac{7d}{p}\right)=1$. Since $(d, R) =1$, it follows that
\eqref{d5} is true for primes $p$ dividing $R_+$, and \eqref{d5}
holds for primes $p$ dividing $R_-$ if and only if $\left(\frac{d}{p}\right) =
1$. Similarly for primes $p$ dividing $N/(N,d)$, \eqref{d5} is valid
for all primes $p$ dividing $N_-$, and for primes $p$ dividing $N_+$
if and only if $\left(\frac{d}{p}\right) = 1$.

\medskip

We now determine when
\begin{equation}\label{d6}
C'_d(\BQ_2) \neq \varnothing.
\end{equation}
We assume first that $M$ is odd. Under this assumption, we will
prove that \eqref{d6} is always true when $d$ is odd, and that it is
true for $d$ even if and only if $M\equiv 3 \mod 4$. Suppose first
that $2$ divides $d$, say $d=2d_1$.  It is then clear that
\eqref{d6} is valid if and only if there is a point with coordinates
in $\BZ_2$, and one can verify without too much difficulty that both
coordinates of such a point should be non-zero.  Write $w=2^m w_1,
z=2^nz_1$ with $w_1, z_1\in \BZ_2^\times$ and $m, n\geq 0$, so that
$$
d_1w_1^2=\frac{2^6+ 2^{2n+1}\cdot 21(Mz_1^2/d_1)+ 2^{4n}\cdot
7(Mz_1^2/d_1)^2}{2^{2m+1}}.
$$
If this equation is soluble, we must have $n\neq 0$ and $2m+1=2n+1<6$,
whence $n=m=1, 2$. If $n=m=1$, then
$$
d_1w_1^2=2^3+21(Mz_1^2/d_1)+2\cdot
7(Mz_1^2/d_1)^2\equiv 5M/d_1-2 \ \mod 8,
$$
which is soluble if and only if $5M-2d_1\equiv 1 \mod 8$. If
$n=m=2$, then
$$
d_1w_1^2=2+21 (Mz_1^2/d_1)+2^3\cdot 7 (Mz_1^2/d_1)^2\equiv
2+5M/d_1 \ \mod 8,
$$
which is soluble if and only if $5M + 2d_1 \equiv 1 \mod 8$.
Hence we see that \eqref{d6} can hold only if $M \equiv 3 \mod 4$.
Conversely, if $M \equiv 3 \mod 4$, one verifies
easily that either $5M + 2d_1$ or $5M - 2d_1$ must be $\equiv 1 \mod
8$, proving our claim for the case when $2$ divides $d$. Assume now that
$d$ as well as $M$ is odd, and we will proceed to show that
\eqref{d6} is always true. This is easily seen to be true if
$d\equiv 1\mod 8$. Writing $w=2^m w_1, z=2^n z_1$ with $m, n\geq
0$ and $w_1, z_1\in \BZ_2^\times$, we have the new equation
$$
dw_1^2=\frac{2^6+ 2^{2n+2}\cdot 21(Mz_1^2/d)+ 2^{4n+2}\cdot
7(Mz_1^2/d)^2}{2^{2m}}.
$$
If $d\equiv 5\mod 8$, take $n=m=3$, then
$$
dw_1^2=1+ 2^2\cdot 21(Mz_1^2/d_1)+2^8\cdot 7(Mz_1^2/d_1)^2\equiv 5 \ \mod 8,
$$
from which we see \eqref{d6} also holds in this case. Let us now
assume that $d\equiv 3\mod 4$ and $M \equiv 1 \mod 4$. Take $n=1$,
then $m=2$ and we have
$$
dw_1^2=2^2(1+7(Mz_1^2/d)^2)+ 21(Mz_1^2/d)\equiv 5M/d \ \mod 8,
$$
which is soluble when $M\equiv 5\mod 8$. Assume now that
$M\equiv 1\mod 8$. Taking $n=2$, our equation becomes
$$
dw_1^2=\frac{2^{-2}\left(1+ 21(Mz_1^2/d)\right)+2^2\cdot 7(Mz_1^2/d)^2}{2^{2m-8}}.
$$
If $d\equiv 3\mod 8$, we take $m=5$ and $z_1$ such that
$1+21Mz_1^2/d\equiv 64\mod 128$; if $d\equiv 7\mod 8$, we take $m=4$
and $z_1$ such that $1+21Mz_1^2/d\equiv 12 \mod 32$. Note that for
any $a\equiv 1\mod 8$, we may find $x\in \BQ_2$ such that $x^2=a$
and therefore $x^2\equiv a\mod 2^n$ is soluble in $\BZ$ for any $n
\geq 3$. Thus we are finally left with the case when $d\equiv 3 \mod
4$ and $M \equiv 3 \mod 4$. In the equation for $C'_d$, put $w =
2^{-m}w_1, z = 2^{-n}z_1$, with $m=2n-1 > 0$, and $w_1, z_1$ in
$\BZ_2^\times$. Then the equation of $C_d'$ becomes
$$
dw_1^2=2^{2m}+7\left(\frac{M}{d}z_1^2+3\cdot 2^m\right)^2.
$$
If $d\equiv 7\mod 8$, we take $m\geq 2$ and $z_1=1$, and \eqref{d6}
is valid. If $d\equiv 3\mod 8$, take $m=1$, so that we have
$dw_1^2=4+7(\frac{M}{d}z_1^2+6)^2$. Take $w_1$ such that
$4-dw_1^2=b^2$ with $b=1$ or $3$, and let $u^2=-7$ so that
$u\equiv \pm 3\mod 8$. Now we have equation
$$
(uMz_1/d)^2=\frac{M}{d}(-6u^2\pm ub)\equiv 3M(2\pm 3b)\equiv M(-2\pm b)\mod 8.
$$
If $M\equiv 3\mod 8$, take $\pm b=-3$, and if $M\equiv 7\mod 8$, take
$\pm b=1$, and it follows that \eqref{d6} holds in this case,
completing the proof of all of our claims about \eqref{d6} when $M$
is odd.

\medskip

Assume now that $M$ is even, and write $M=2M_1$. Replacing $z$ by
$2z$, the equation for $C'_d$ then becomes
\begin{equation}\label{d7}
dw^2=1+7\left(\frac{M_1}{d}z^2+3\right)^2.
\end{equation}
Suppose first that $d$ is odd. We shall prove that \eqref{d6} is
true if and only if either $d\equiv \pm 1 \mod 8$, or $d \equiv \pm
3 \mod 8$ and $M_1 \equiv 3 \mod 4$. Note first that \eqref{d7} has
a solution in $\BQ_2$ with $z=0$ if and only if $d \equiv 1 \mod 8$,
and there is no solution when $w=0$.  Put $w=2^{-m}w_1, z =
2^{-n}z_1$, where $m, n > 0$, and  $w_1, z_1$ are in $\BZ_2^\times$.
Then a necessary condition for a solution is that $m = 2n$, and we
then obtain the new equation
$$
dw_1^2=2^{4n}+7\left(\frac{M_1}{d}z_1^2+3\cdot 2^{2n}\right)^2.
$$
Since the right hand side of this equation is $\equiv 7 \mod 8$, we
deduce that it has a solution if and only if $d \equiv 7 \mod 8.$
Next assume that $w=2^{m}w_1, z = 2^{n}z_1$, where $m, n \geq 0$, and
$w_1, z_1$ are in $\BZ_2^\times$, so that the equation becomes
\begin{equation}\label{d8}
dw_1^2=\frac{2^6+2^{2n+1} \cdot 21(M_1z_1^2/d)+2^{4n}\cdot
7(M_1z_1^2/d)^2}{2^{2m}}.
\end{equation}
It is easily seen that the solubility of \eqref{d8} implies that
$n=m=0$ or $m=3$ and $n\geq 3$. In the case $n=m=0$, \eqref{d8} will
be soluble if and only if $2M_1-d \equiv 1 \mod 8$; In the case
$n=m=3$, it will be soluble if and only if
 $2M_1+d \equiv 1 \mod 8$. In the case
$m=3$ and $n\geq 4$, \eqref{d8} is soluble if and only if $d \equiv
1\mod 8$. This proves our claim about the validity of \eqref{d6} in
this case.

\medskip

Now assume finally that $d$, as well as $M$, is even, and put
$d=2d_1$. We will prove that, in this case,  $\eqref{d6}$ is valid
if and only if $M_1\nequiv 1\mod 8$ when $d_1 \equiv \pm 1 \mod 8$,
and if and only if $M_1 \nequiv 5 \mod 8$ when $d_1 \equiv \pm 3
\mod 8.$ It is easy to see that all points on \eqref{d7} with
coordinates in $\BQ_2$ must have  $w$ and  $z$ in $\BZ_2\setminus
\{0\}$ and $z$ in $2\BZ_2$. Put $w=2^mw_1, z=2^n z_1$ with $w_1,
z_1\in \BZ_2^\times$ and $m\geq 0, n\geq 1$, and we can then rewrite
\eqref{d7} as
$$
d_1w_1^2=\frac{2^6+ 2^{2n}\cdot 21(M_1z_1^2/d_1)+2^{4n-2}\cdot
7(M_1z_1^2/d_1)^2}{2^{2m+1}}.
$$
It follows easily that $n=1$ or $3$. If $n=1$, this equation
becomes
\begin{equation}\label{d9}
d_1w_1^2=\frac{7(M_1z_1^2/d_1)\left((M_1z_1^2/d_1)+3\right)+2^4}{2^{2m-1}}.
\end{equation}
For this equation, a necessary condition for a solution is that
$v=ord_2(M_1z_1^2/d_1+3)$ must be equal to $1, 3$, or $4$, and we
analyse solubility in each of these cases. If $v=1$, we have $m=1$
and thus
$$
(d_1w_1)^2=7M_1z_1^2\cdot
\frac{\frac{M_1}{d_1}z_1^2+3}{2}+2^3d_1\equiv
-\frac{(M_1z_1)^2+3M_1d_1}{2d_1} \ \mod 8,
$$
which is soluble if and
only if $-2d_1\equiv x^2+3M_1d_1\mod 16$ is soluble, and this will
happen if $3M_1+d_1+2\equiv 0\mod 8$. Suppose next that $v=3$, so
that we then have $m=2$ and
$$
(d_1w_1)^2=7M_1z_1^2\cdot
\frac{\frac{M_1}{d_1}z_1^2+3}{8}+2d_1\equiv
-\frac{(M_1z_1)^2+3M_1d_1}{8d_1}+2d_1 \ \mod 8.
$$
One then verifies that this
last equation is soluble if and only if $d_1\equiv 5M_1\mod 8$. If
$v=4$ one also sees easily that \eqref{d9} is soluble only if
$M_1d_1\equiv 5\mod 8$, in which case the previous case when $v=2$
shows that there is a solution of \eqref{d9}.
Finally, we have to consider the case $n=3$, when the equation
becomes
\begin{equation}\label{d10}
d_1w_1^2=\frac{\left(1+21M_1z_1^2/d_1\right)+2^4\cdot
7(M_1z_1^2/d_1)^2}{2^{2m-5}}.
\end{equation}
It is then easy to see that $u=ord_2(1+21 M_1z_1^2/d_1)$ must be equal to $1, 3$, or $4$.
When $u=1$, which is equivalent to $d_1+M_1\equiv 2\mod 4$,  we have $m=3$ and
$$
(d_1w_1)^2=\frac{d_1+21M_1z_1^2}{2}+2^3\cdot
7d_1(M_1z_1^2/d_1)^2\equiv \frac{d_1+5M_1z_1^2}{2}\mod 8
$$
which is soluble if and only if $5M_1+d_1-2\equiv 0\mod 8$. When
$u=3$, which is equivalent to $1+21M_1z_1^2/d_1\equiv
8\mod 16$, we conclude that necessarily $d_1\equiv 3M_1\mod 8$. Thus
we must have $m=4$ and
$$(d_1w_1)^2=\frac{d_1+21M_1z_1^2}{8}+2\cdot 7d_1(M_1z_1^2/d_1)^2
\equiv \frac{d_1+5M_1z_1^2}{8}+ 6d_1\mod 8,$$ which is soluble
if and only if $d_1\equiv 3M_1\mod 8$. When $u=4$, we have
$1+21M_1z_1^2/d_1\equiv 2^4\mod 2^5$, which implies $d_1\equiv
3M_1\mod 8$, and thus there is already solubility from the previous
case when $u=3$. This completes the proof of the analysis of when
\eqref{d6} is valid, and the assertions of the proposition now
follow by putting together the cases discussed above.

\end{proof}

We now give some consequences of Propositions \ref{descent1} and
\ref{descent2}. We will assume for the whole of this paragraph that $M$ is
a square free integer, prime to $7$, with $M \equiv 1\, \mod \, 4$, and, as in \eqref{con1}, we write $M = \epsilon RN$. In
particular, it follows that the curve $A^{(M)}$ always has good
reduction at 2, and its $L$-function has global root number $+1$
(reps. $-1$) when $M > 0$ (resp. $M < 0$). We write
$S^{(2)}(A^{(M)})$ for the classical Selmer group of $A^{(M)}$ for
the endomorphism given by multiplication by $2$. Now it is easily
seen that we have an exact sequence
\begin{equation}
0 \to A'^{(M)}(\BQ)[\hat{\phi}] \to S^{(\phi)}(A^{(M)}) \to
S^{(2)}(A^{(M)}) \to S^{(\hat{\phi})}(A'^{(M)}).
\end{equation}
Define $\mathfrak S^{(\phi)}(A^{(M)})$ and $\mathfrak S^{(2)}(A^{(M)})$ to
be the quotients of $S^{(\phi)}(A^{(M)})$ and $S^{(2)}(A^{(M)})$ by
the images of the torsion subgroups of $A'^{(M)}(\BQ)$ and
$A^{(M)}(\BQ)$, respectively. Since $A^{(M)}$ and $A'^{(M)}$ have
good reduction at $2$, the theory of complex multiplication shows that
the $2$-primary subgroups of
$A'^{(M)}(\BQ)$ and $A^{(M)}(\BQ)$ are both just of order $2$, whence it follows easily that we have the exact sequence
\begin{equation}\label{d11}
0 \to \mathfrak S^{(\phi)}(A^{(M)}) \to \mathfrak S^{(2)}(A^{(M)}) \to S^{(\hat{\phi})}(A'^{(M)}).
\end{equation}
Note also that the parity theorem of the Dokchitser brothers \cite{D} shows that $\mathfrak S^{(2)}(A^{(M)})$ has even or odd $\mathbb{F}_2$-dimension according as $M > 0$ or $M < 0$.

\begin{cor}\label{descent3} Assume that $M = R_+$. Then $\mathfrak S^{(2)}(A^{(M)}) = 0.$
\end{cor}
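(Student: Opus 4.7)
The plan is to exploit Corollaries \ref{cd1} and \ref{cd2} together with the exact sequence \eqref{d11} and the Dokchitser parity theorem. First I observe that the hypothesis $M = R_+$ forces $M \equiv 1 \mod 4$ and $\gcd(M, 7) = 1$, and, in the notation of \eqref{con1}, $\epsilon = 1$, $\delta = 0$, $N = 1$, and $R_- = N_+ = N_- = 1$, so both corollaries are directly applicable.

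Applying Corollary \ref{cd1}, the condition $d \mid R_- N_+ = 1$ collapses the list of allowed divisors to $d = 1$, and the residual constraints (including the Confucian condition) are vacuous. Hence $S^{(\phi)}(A^{(M)}) = \{1, -7\}$. But $\{1,-7\}$ is already the image of $A'^{(M)}(\BQ)[\hat\phi] = \{O, (0,0)\}$, and using the fact that $A'^{(M)}(\BQ)[2^\infty] = A'^{(M)}(\BQ)[\hat\phi]$ stated just before \eqref{d11} via complex multiplication and good reduction at $2$, it is also the image of the entire torsion subgroup $A'^{(M)}(\BQ)_{\tor}$ in the $\BF_2$-vector space $S^{(\phi)}(A^{(M)})$ (points of odd-order torsion contribute nothing to a $2$-elementary group). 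Therefore $\mathfrak S^{(\phi)}(A^{(M)}) = 0$.

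Next, applying Corollary \ref{cd2} with $N = 1$ and $R_- = 1$ similarly forces $d = 1$, so $S^{(\hat\phi)}(A'^{(M)}) = \{1, 7\}$, a group of order $2$. Substituting these two facts into the exact sequence \eqref{d11} shows that $\mathfrak S^{(2)}(A^{(M)})$ embeds into a group of order $2$, and so has $\BF_2$-dimension at most $1$. Since $M = R_+ > 0$, the Dokchitser parity theorem cited just after \eqref{d11} forces $\dim_{\BF_2} \mathfrak S^{(2)}(A^{(M)})$ to be even; combined with the upper bound, this dimension must be $0$, which is the claim.

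Essentially all of the serious work has already been carried out in Propositions \ref{descent1}--\ref{descent2}; the main subtlety in what remains is recognizing that parity is essential. Without it, the injection into $\BZ/2\BZ$ would leave open the possibility of a one-dimensional $\mathfrak S^{(2)}(A^{(M)})$, and only the hypothesis $M > 0$ (equivalently root number $+1$) rules that case out.
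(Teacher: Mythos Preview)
Your proof is correct and follows essentially the same approach as the paper: compute $\mathfrak S^{(\phi)}(A^{(M)}) = 0$ and $\#S^{(\hat\phi)}(A'^{(M)}) = 2$ via the descent propositions (the paper cites Propositions \ref{descent1}--\ref{descent2} directly, you use their specializations Corollaries \ref{cd1}--\ref{cd2}), then combine the exact sequence \eqref{d11} with parity. Your added detail about the torsion image is fine but not strictly necessary given the discussion preceding \eqref{d11}.
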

\begin{proof}
Indeed Proposition \ref{descent1} shows that, in this case, we have $\mathfrak S^{(\phi)}(A^{(M)})=0$, and Proposition \ref{descent2} shows that $S^{(\hat{\phi})}(A'^{(M)})$ has order $2$, whence the assertion follows from the exact sequence \eqref{d11}, and the fact that $\mathfrak S^{(2)}(A^{(M)})$ must have even $\mathbb{F}_2$-dimension.
\end{proof}
\begin{cor}\label{descent4} Assume that $M = R$ with $M \equiv 1\, \mod \, 4$, and let $r_-(M)$ denote the number of prime factors of $R_-$. Then  $\mathfrak S^{(2)}(A^{(M)})$ has exact order  equal to $2^{r_-(M)}$.
\end{cor}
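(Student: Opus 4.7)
The plan is to combine an explicit computation of the two Selmer groups $S^{(\phi)}(A^{(M)})$ and $S^{(\hat{\phi})}(A'^{(M)})$ via Corollaries \ref{cd1} and \ref{cd2} with the parity theorem of the Dokchitser brothers, feeding the outputs into the exact sequence \eqref{d11}. A preliminary observation that turns out to be crucial: since $M \equiv 1 \mod 4$ and each prime factor of $R_-$ is $\equiv 3 \mod 4$, the quantity $r_-(M)$ is automatically even.

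First I would specialize Corollary \ref{cd1} to the present case $N = 1$. The conditions involving $N_+$, $N_-$, and the Confucian condition all become vacuous, leaving the parametrizing integers $d$ as precisely the divisors of $R_-$ with $d \equiv 1 \mod 4$. Such $d$ correspond to even-sized subsets of the $r_-(M)$ prime factors of $R_-$, giving $2^{r_-(M)-1}$ such divisors when $r_-(M) \geq 1$ (and exactly one, $d=1$, when $r_-(M) = 0$). Including the companion classes $-7d$ and then quotienting by the image of $A'^{(M)}(\BQ)_\tor$, whose $2$-primary part has order $2$ and is generated by the class $[-7]$, yields $|\mathfrak{S}^{(\phi)}(A^{(M)})| = 2^{\max(r_-(M)-1,\,0)}$. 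The analogous specialization of Corollary \ref{cd2} forces $d = 1$, so $S^{(\hat{\phi})}(A'^{(M)}) = \{1,7\}$ has order $2$.

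Feeding these into the exact sequence \eqref{d11} shows that the $\BF_2$-dimension of $\mathfrak{S}^{(2)}(A^{(M)})$ is either $r_-(M)-1$ or $r_-(M)$ (with the former interpreted as $0$ when $r_-(M)=0$). The step that does the real work, and which I expect to be the main obstacle to a more elementary proof, is ruling out the smaller value. This comes from the Dokchitser parity theorem: since $M > 0$, the global root number of $A^{(M)}$ equals $+1$, so $\dim_{\BF_2}\mathfrak{S}^{(2)}(A^{(M)})$ must be even. Because $r_-(M)$ is even while $r_-(M)-1$ is odd, the parity constraint selects the larger possibility, yielding $|\mathfrak{S}^{(2)}(A^{(M)})| = 2^{r_-(M)}$ as claimed. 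The case $r_-(M)=0$ specializes to Corollary \ref{descent3} and provides a consistency check; for general even $r_-(M) \geq 2$, once the two Selmer computations are in place, the entire substantive content of the argument is concentrated in this parity step.
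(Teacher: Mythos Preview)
Your proof is correct and follows essentially the same approach as the paper's own proof, which simply cites Corollaries \ref{cd1} and \ref{cd2}, the exact sequence \eqref{d11}, and the parity constraint coming from the root number being $+1$. You have spelled out in detail the counting of admissible divisors $d$ of $R_-$ and the key observation that $r_-(M)$ is even, both of which the paper leaves implicit.
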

\begin{proof} One simply invokes Corollaries \ref{cd1} and \ref{cd2} and the exact sequence \eqref{d11}, and notes that $\mathfrak S^{(2)}(A^{(R)})$ must have even dimension
over $\mathbb{F}_2$ because the root number of $A^{(R)}$ is $+1$.
\end{proof}
\begin{cor}\label{descent5} Assume that $M = R_+N_-$, with $M \equiv 1\, \mod \, 4$,  and let $k_-(M)$ denote the number of prime factors of $N_-$. Then $\mathfrak S^{(\hat{\phi})}(A'^{(M)})$ has exact order $2^{k_-(M)}$, and $\mathfrak S^{(2)}(A'^{(M)})$ has order at least equal to $2^{k_-(M)}$.
\end{cor}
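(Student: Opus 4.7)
The plan is to read off $|S^{(\hat{\phi})}(A'^{(M)})|$ directly from Corollary \ref{cd2}, and then combine with the isogeny descent exact sequence, mirroring the strategy used for Corollaries \ref{descent3} and \ref{descent4}. With $M = R_+N_-$, we have $R_- = 1$ and $N_+ = 1$, so the three non-trivial conditions in Corollary \ref{cd2} collapse dramatically: condition (iii) is vacuous because there are no prime factors of $R_-$; condition (iv) (the Confucian condition) is vacuous because it only imposes constraints at primes dividing $N_+$; and condition (ii) says precisely that $d$ is a positive divisor of $N = N_-$. Hence the eligible $d$'s are exactly the $2^{k_-(M)}$ positive divisors of $N_-$, and each contributes the two distinct classes $d, 7d$ in $\BQ(2,M)$. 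Therefore
\[
|S^{(\hat{\phi})}(A'^{(M)})| = 2 \cdot 2^{k_-(M)} = 2^{k_-(M)+1}.
\]

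Next, I pass to $\mathfrak S^{(\hat{\phi})}(A'^{(M)})$ by dividing out the image of the torsion of $A^{(M)}(\BQ)$. Since $A^{(M)}$ has good reduction at $2$ (as $M$ is odd and prime to $7$), complex multiplication forces the $2$-primary subgroup of $A^{(M)}(\BQ)$ to be exactly $\langle (0,0) \rangle$, whose image in $S^{(\hat{\phi})}(A'^{(M)})$ is the class $\{7\}$ of order $2$ (as noted in the paper right after the statement of Proposition \ref{descent2}). Dividing gives
\[
|\mathfrak S^{(\hat{\phi})}(A'^{(M)})| = 2^{k_-(M)+1}/2 = 2^{k_-(M)},
\]
which is the first assertion.

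For the second assertion I would invoke the exact sequence obtained from the standard isogeny descent diagram with the roles of $A^{(M)}$ and $A'^{(M)}$ interchanged (so that $\hat{\phi}$ plays the role played by $\phi$ in \eqref{d11}). Quotienting by the image of torsion yields
\[
0 \to \mathfrak S^{(\hat{\phi})}(A'^{(M)}) \to \mathfrak S^{(2)}(A'^{(M)}) \to S^{(\phi)}(A^{(M)}),
\]
which immediately forces
\[
|\mathfrak S^{(2)}(A'^{(M)})| \geq |\mathfrak S^{(\hat{\phi})}(A'^{(M)})| = 2^{k_-(M)}.
\]

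The only point requiring a little care is the verification that the image of $A^{(M)}(\BQ)_{\mathrm{tors}}$ in $S^{(\hat{\phi})}(A'^{(M)})$ has order exactly $2$ (not $1$), so that the quotient really has order $2^{k_-(M)}$; this is the mild obstacle, but it is handled by the same CM/good reduction argument the paper already uses just before \eqref{d11}. Everything else is a direct application of Corollary \ref{cd2} together with the standard descent exact sequence.
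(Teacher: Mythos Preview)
Your proof is correct and follows essentially the same approach as the paper's own argument: compute $S^{(\hat{\phi})}(A'^{(M)})$ directly from Corollary \ref{cd2} (equivalently Proposition \ref{descent2}), quotient by the image of torsion, and then invoke the analogue of the exact sequence \eqref{d11} with the roles of $A^{(M)}$ and $A'^{(M)}$ swapped. The paper states this more tersely, but your expanded version fills in exactly the intended details.
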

\begin{proof} The first assertion is clear from Proposition \ref{descent2}, and so the corollary is clear from the analogue of the exact sequence \eqref{d11} for $A'^{(M)}$.
\end{proof}
\begin{cor}\label{descent6} Assume that $M = RN_+$, where $M \equiv 1\, \mod \, 4$ and $N_+ > 1$. Assume further that every prime factor of $N_+$ splits completely in the field $\BQ(i, \sqrt[4]{-7},\sqrt{R})$. Then $\mathfrak S^{(2)}(A^{(M)}) \neq 0$.
\end{cor}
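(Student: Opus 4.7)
The plan is to produce a non-trivial class in the subgroup $\mathfrak S^{(\phi)}(A^{(M)})$, since the exact sequence \eqref{d11} then forces $\mathfrak S^{(2)}(A^{(M)}) \neq 0$. The natural candidate is $d = N_+$, viewed as a class in $\BQ(2, M)$.

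First I would verify, using Corollary \ref{cd1}, that $d = N_+$ lies in $S^{(\phi)}(A^{(M)})$. Conditions (i)--(iii) are immediate: $N_+$ is a product of primes $\equiv 1 \mod 4$, hence $N_+ \equiv 1 \mod 4$; $N_+$ divides $R_- N_+$; and since $N = N_+$ we have $N_- = 1$, making the residue condition at primes of $N_-$ vacuous. The substantive point is (iv), the Confucian condition. Because $(d, N_+) = N_+$ and $N_+/(d, N_+) = 1$, only the second half of \eqref{con} carries content, demanding $\left(\frac{R}{p}\right) = \left(\frac{-7}{p}\right)_4$ for every prime $p \mid N_+$. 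But the hypothesis that each such $p$ splits completely in $\BQ(i, \sqrt[4]{-7}, \sqrt{R})$ yields $\left(\frac{R}{p}\right) = 1$ (from splitting in $\BQ(\sqrt{R})$) and $\left(\frac{-7}{p}\right)_4 = 1$ (from splitting in $\BQ(\sqrt[4]{-7})$, which is equivalent to $-7$ being a fourth power modulo $p$), so the required equality holds.

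Next I would show that the class of $N_+$ is not killed on passing to $\mathfrak S^{(\phi)}(A^{(M)})$. Since $A'^{(M)}$ has good reduction at $2$, the theory of complex multiplication used earlier in the section gives $A'^{(M)}(\BQ)_\tor = \{O, (0,0)\}$, and a direct computation with the explicit descent formula (cf.\ Proposition 4.9 of \cite{Sil}) identifies the image of $(0,0)$ in $\BQ^\times/(\BQ^\times)^2$ as the class of $a^2 - 4b = -7M^2 \equiv -7$. It therefore suffices to rule out $N_+ \equiv 1$ and $N_+ \equiv -7$ modulo $(\BQ^\times)^2$: the former fails because $N_+ > 1$ is non-trivially square-free, and the latter because $-7 N_+ < 0$ cannot be a perfect square.

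The argument is otherwise routine. The only step requiring any care is matching the biquadratic residue condition $\left(\frac{-7}{p}\right)_4 = 1$ with the splitting of $p$ in $\BQ(\sqrt[4]{-7})$; once that identification is in hand, the result follows directly from Corollary \ref{cd1} and the exact sequence \eqref{d11}, with no new local computations beyond those already performed in Proposition \ref{descent1}.
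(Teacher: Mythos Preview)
Your proof is correct and follows essentially the same route as the paper: exhibit $N_+$ as a nontrivial element of $\mathfrak S^{(\phi)}(A^{(M)})$ by checking the conditions of Corollary~\ref{cd1}, then invoke the injection \eqref{d11}. Your argument is in fact more carefully written than the paper's terse version---you explicitly use the splitting hypothesis in $\BQ(\sqrt{R})$ to obtain $\left(\frac{R}{p}\right)=1$, whereas the paper's proof only names $\BQ(i,\sqrt[4]{-7})$ when justifying the Confucian condition, leaving the $\sqrt{R}$ part implicit.
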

\begin{proof} We claim that, under the hypotheses of the corollary, we always have $N_+$ in $\mathfrak S^{(2)}(A^{(M)})$. But this is clear from Proposition \ref{descent1} because $N_+ \equiv 1 \mod 4$, and $N_+$ is indeed a Confucian divisor of $M$, thanks to our hypothesis that every prime factor of $N_+$ splits completely in the field $\BQ(i, \sqrt[4]{-7})$.
\end{proof}
\begin{cor}\label{descent7} Assume that $M = - \ell_0R_+N_+$, where $l_0$ is a prime such that $\ell_0 \equiv 3 \mod 4$ and $\ell_0$ is inert in $F$. Assume further that every prime factor of $N_+$
splits completely in the field obtained by adjoining to $\BQ(i,
\sqrt[4]{-7})$ the square roots of all primes dividing $R_+$. Then
$\mathfrak S^{(2)}(A^{(M)})$ has order $2$ if and only if the ideal class
group of the imaginary quadratic field $\BQ(\sqrt{-\ell_0N_+})$ has no
element of exact order $4$. \end{cor}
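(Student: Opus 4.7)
The strategy is to feed $M = -\ell_0 R_+ N_+$ into Corollaries \ref{cd1} and \ref{cd2}, translate the resulting Confucian conditions into an explicit $\BF_2$-linear system, and then match that system with the Rédei matrix computing the $4$-rank $r_4$ of $\Cl(K)$ for $K = \BQ(\sqrt{-\ell_0 N_+})$.

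Since $\ell_0 \equiv 3 \mod 4$ and each prime of $R_+ N_+$ is $\equiv 1 \mod 4$, we have $M \equiv 1 \mod 4$, so Corollaries \ref{cd1} and \ref{cd2} apply with $R = \ell_0 R_+$ and $N = N_+$. Writing $N_+ = p_1 \cdots p_k$ and parametrizing squarefree divisors $d \mid N_+$ by the indicator vector $v = (v_1, \ldots, v_k)$ of the subset $S \subseteq \{1, \ldots, k\}$ with $d = \prod_{i \in S} p_i$, the residue condition forces $\ell_0 \nmid d$ and the $N_-$-condition is vacuous. The splitting hypothesis on each $p_j \mid N_+$ gives $(-1/p_j) = 1$, $(-7/p_j)_4 = 1$, and $(q/p_j) = 1$ for every $q \mid R_+$, so after substituting $M/d = -\ell_0 R_+ (N_+/d)$ the Confucian conditions \eqref{con} collapse to
\begin{equation*}
\prod_{i \in S}\left(\tfrac{p_i}{p_j}\right) = 1 \ (j \notin S), \qquad \left(\tfrac{\ell_0}{p_j}\right) \prod_{i \notin S}\left(\tfrac{p_i}{p_j}\right) = 1 \ (j \in S),
\end{equation*}
which unify into a single homogeneous equation $\tilde R\, v = 0$ for a $k \times k$ matrix $\tilde R$ over $\BF_2$ whose off-diagonal entries encode $(p_i/p_j)$ and whose diagonal is $(\ell_0/p_j)\prod_{i \neq j}(p_i/p_j)$. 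Corollary \ref{cd2} then cuts $\mathfrak S^{(\hat\phi)}(A'^{(M)})$ out of $\ker \tilde R$ by the further hyperplane condition $(d/\ell_0) = 1$.

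The crucial step is to recognise $\tilde R$ as the submatrix of the Rédei matrix $R_K$ of $K$ obtained by deleting the $\ell_0$-row and $\ell_0$-column. The field $K$ has discriminant $-\ell_0 N_+$ with the $k+1$ ramified primes $\ell_0, p_1, \ldots, p_k$, so genus theory gives $\Cl(K)[2] \cong \BF_2^k$, and Rédei's theorem gives $r_4 = k - \rank R_K$. Using $\ell_0^* = -\ell_0$, $p_i^* = p_i$, quadratic reciprocity between primes $\equiv 1 \mod 4$, and the row-sum-zero convention for the Rédei diagonal, a direct comparison verifies this identification; the row-sum identity forces $\rank \tilde R = \rank R_K$, and the extra hyperplane $(d/\ell_0) = 1$ cutting out $\mathfrak S^{(\hat\phi)}$ is automatic on $\ker \tilde R$ by column-sum duality. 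Hence
\begin{equation*}
\dim_{\BF_2} \mathfrak S^{(\phi)}(A^{(M)}) = \dim_{\BF_2} \mathfrak S^{(\hat\phi)}(A'^{(M)}) = r_4.
\end{equation*}

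To conclude I would plug these two dimensions into the exact sequence \eqref{d11}, together with its companion obtained by swapping $\phi$ and $\hat\phi$ (exploiting the $2$-isogeny between $A^{(M)}$ and $A'^{(M)}$, whose $2$-Selmer groups have equal $\BF_2$-dimension). The parity theorem of the Dokchitser brothers \cite{D}, already invoked in the paragraph preceding Corollary \ref{descent3}, forces $\dim_{\BF_2} \mathfrak S^{(2)}(A^{(M)})$ to be odd, since the root number of $A^{(M)}$ is $-1$ when $M < 0$ is prime to $7$. Combining the upper and lower bounds with parity pins down $\dim_{\BF_2} \mathfrak S^{(2)}(A^{(M)}) = 2 r_4 + 1$, from which the corollary follows at once: $|\mathfrak S^{(2)}(A^{(M)})| = 2$ exactly when $r_4 = 0$. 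The main obstacle will be the central matrix bookkeeping — identifying $\tilde R$ with the Rédei minor and tracking the torsion quotients $\{1, \pm 7\}$ consistently when passing from the raw Selmer groups to their reduced versions — so that the final parity comparison is tight rather than loose.
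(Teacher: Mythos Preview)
Your approach is essentially the paper's---translate the Confucian conditions into an $\BF_2$-linear system and identify it with the R\'edei matrix of $\BQ(\sqrt{-\ell_0 N_+})$---but two concrete errors derail the final bookkeeping.

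\textbf{The count of $\mathfrak S^{(\phi)}$ is off by one.} In Corollary~\ref{cd1} the divisor $d$ ranges over \emph{signed} divisors of $R_-N_+=\ell_0 N_+$ with $d\equiv 1\bmod 4$. You took only positive $d\mid N_+$, but the divisors $d=-\ell_0 d'$ with $d'\mid N_+$ also satisfy $d\equiv 1\bmod 4$; a short check (or just the substitution $d\mapsto V/d$ with $V=-\ell_0 N_+$) shows that $d_S$ is Confucian iff $-\ell_0\,d_{S^c}$ is. In particular $V$ itself is \emph{always} Confucian, and
\[
\dim_{\BF_2}\mathfrak S^{(\phi)}(A^{(M)}) \;=\; 1+\dim_{\BF_2}\ker\tilde R \;=\; r_4+1,
\]
not $r_4$. (Your computation of $\mathfrak S^{(\hat\phi)}$, including the nice observation that the column sums of $\tilde R$ equal $b$ so that $(d/\ell_0)=1$ is automatic on $\ker\tilde R$, is correct and gives $\dim\mathfrak S^{(\hat\phi)}=r_4$.)

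\textbf{The ``equal $2$-Selmer'' step is false.} Curves related by a $2$-isogeny do \emph{not} in general have $2$-Selmer groups of the same $\BF_2$-dimension; this is precisely why second descents carry content. So the companion-sequence argument cannot be used to tighten the bounds, and the claimed exact formula $\dim\mathfrak S^{(2)}=2r_4+1$ is not established (nor is it needed).

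With the corrected dimension the corollary falls out immediately, exactly as in the paper: if $r_4=0$ then $\dim\mathfrak S^{(\phi)}=1$ and $\dim S^{(\hat\phi)}=1$, so \eqref{d11} gives $1\le\dim\mathfrak S^{(2)}\le 2$ and parity forces $\dim=1$; if $r_4\ge 1$ then already $\dim\mathfrak S^{(2)}\ge\dim\mathfrak S^{(\phi)}=r_4+1\ge 2$. Your detailed identification $\rank\tilde R=\rank R_K$ via the row- and column-sum relations is a useful expansion of what the paper leaves to ``classical theory of genera via the R\'edei matrix''.
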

\begin{proof} Put $V = -\ell_0N_+$. Under the hypotheses of the corollary, it is clear that $V$ is always a Confucian divisor of $M$, which is $\equiv 1 \mod 4$, and so belongs to $\mathfrak S^{(\phi)}(A^{(M)})$. Moreover, Propositions
\ref{descent1} and \ref{descent2} show that we then have
\begin{equation}\label{descent13}
 S^{(\phi)}(A^{(M)})= \{1, -7, V, -7V\}, \, \, \, \textrm{and} \, \, \, S^{(\hat{\phi})}(A'^{(M)}) = \{1, 7\},
\end{equation}
or equivalently that $\mathfrak S^{(2)}(A^{(M)})$ has order $2$,  if and only if there is no Confucian divisor $d\neq 1, V$ of $V$ with $d\equiv 1 \mod 4$. But the classical theory of genera, interpreted via the R\'{e}dei matrix, shows that this last assertion holds if and only if the imaginary quadratic field $\BQ(\sqrt{V})$ has no element of order $4$ in its ideal class group.
 \end{proof}

In order to be able to compare the $2$-descent arguments given above with the predictions of the $2$-part of the conjecture of Birch and Swinnerton-Dyer, we give the Tamagawa factors for the curves $A^{(M)}$ and $A'^{(M)}$, with a brief indication of proofs. We assume once again that $M$ is an arbitrary square free integer prime to 7, and write $D_M$ for the discriminant of the field $\BQ(\sqrt{M})$. Note that both $A^{(M)}$ and $A'^{(M)}$ have bad additive reduction at all primes dividing $7D_M$. Write $c_p(A^{(M)})$ for the Tamagawa factor of $A^{(M)}$ at a finite prime $p$, and similarly for $A'^{(M)}$. If $p$ is an odd prime of bad additive reduction, it is well known (see \cite{C2}, Lemmas 36 and 37) that
\begin{equation}\label{descent10}
c_p(A^{(M)}) = \#(A(\BQ_p)[2]), \, \, \, c_p(A'^{(M)}) = \#(A'(\BQ_p)[2]).
\end{equation}
Also, writing $F = \BQ(\sqrt{-7})$ and $F' = \BQ(\sqrt{7})$, we have
\begin{equation}\label{descent11}
F = \BQ(A[2]), \, \, \, \, F' = \BQ(A'[2]).
\end{equation}
\begin{prop}\label{descent8} For all square free integers $M$, we have (i) $A^{(M)}(\BR)$ has one connected component, (ii)
$c_2(A^{(M)})$ is equal to $1$ or $4$, according as $D_M$ is odd or even, (iii) $c_7(A^{(M)}) = 2$,
(iv) $c_p(A^{(M)}) = 2$ if $p$ is an odd prime dividing $M$, which is inert in $F$, and (v) $c_p(A^{(M)}) = 4$ if $p$ is an odd prime
dividing $M$, which is split in $F$.
\end{prop}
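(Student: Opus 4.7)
The plan is to establish the five claims in turn, working from the Weierstrass model
\[
A^{(M)}:\quad y^2 = x^3 + 21M x^2 + 112 M^2 x
\]
introduced earlier in Section~3, whose discriminant is $-2^{12}\cdot 7^3\cdot M^6$, and using the two identities \eqref{descent10} and \eqref{descent11}.

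For (i), I would factor the cubic as $x(x^2+21Mx+112M^2)$: the quadratic factor has discriminant $441M^2-448M^2=-7M^2<0$, so the cubic has a unique real root and $A^{(M)}(\BR)$ has exactly one connected component.

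For (iii)--(v), I would invoke \eqref{descent10}, which gives $c_p(A^{(M)})=\#A^{(M)}(\BQ_p)[2]$ at every odd prime of bad additive reduction. Additive reduction is inherited from $A$ at $p=7$ (since $(M,7)=1$ and $A$ has additive reduction with minimal discriminant $-7^3$), and at any odd $p\mid M$ it is produced by twisting the good reduction of $A$ at $p$ by the ramified character $\chi_M$. A quadratic twist leaves the $G_{\BQ_p}$-module structure on the $2$-torsion unchanged (multiplication by $-1$ is the identity on an $\BF_2$-vector space), so $A^{(M)}(\BQ_p)[2]=A(\BQ_p)[2]$. Combined with $\BQ(A[2])=F$ from \eqref{descent11}, this makes $\#A(\BQ_p)[2]$ equal to $4$ when $F\otimes_{\BQ}\BQ_p$ is split (i.e.\ $p$ splits in $F$) and $2$ when $F\otimes_{\BQ}\BQ_p$ is a field (i.e.\ $p$ is inert in $F$, or $p=7$ is ramified). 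Parts (iii), (iv) and (v) follow.

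For (ii), I would split on the parity of $D_M$. If $D_M$ is odd, so $M\equiv 1\mod 4$, then $\BQ(\sqrt{M})/\BQ$ is unramified at $2$; since $A$ has good reduction at $2$, so does $A^{(M)}$, giving $c_2(A^{(M)})=1$. If $D_M$ is even, the twisting character is ramified at $2$ and $A^{(M)}$ acquires additive reduction at $2$; I would then run Tate's algorithm on a minimal model at $2$, separating the subcases $M\equiv 3\mod 4$, $M\equiv 2\mod 8$ and $M\equiv 6\mod 8$, and in each determining the Kodaira symbol and the $\BQ_2$-rational component group. A guiding cross-check is that $-7\equiv 1\mod 8$, so $\sqrt{-7}\in\BQ_2$ and hence $A[2]\subset A(\BQ_2)$; by the twist-invariance used above $\#A^{(M)}(\BQ_2)[2]=4$, consistent with the claimed value $c_2(A^{(M)})=4$.

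The main obstacle is the additive-reduction half of (ii). Our starting Weierstrass equation is highly non-minimal at $2$ (with $v_2(\Delta)\geq 12$), and I expect the Tate-algorithm computation on a minimal model has to be carried out separately for each parity class of $M$ modulo $16$. Parts (i) and (iii)--(v) are cleaner because they reduce to the Galois module structure of $A[2]$, but the component group at $2$ in the ramified-twist case genuinely depends on the precise $2$-adic behaviour of $M$.
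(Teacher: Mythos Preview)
Your proposal is correct and follows essentially the same route as the paper: for (i) the paper simply quotes $F=\BQ(A[2])$ (which is equivalent to your discriminant computation $-7M^2<0$), for (iii)--(v) it likewise invokes \eqref{descent10} together with the splitting behaviour of $p$ in $F$, and for (ii) it too defers to Tate's algorithm in the even-$D_M$ case. Your added remarks on twist-invariance of $2$-torsion and the $2$-adic cross-check $\sqrt{-7}\in\BQ_2$ are helpful elaborations of what the paper leaves implicit.
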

\begin{proof} Assertion (i) follows immediately from the fact that $F= \BQ(A[2])$. For assertion (ii), one has to use Tate's algorithm for computing
the Tamagawa factor $c_2(A^{(M)})$ when $D_M$ is even. The remaining assertions involving odd primes $p$ of bad reduction follow immediately from
\eqref{descent10}, on noting that $A(\BQ_p)[2]$ is of order $2$ or $4$, according as $p$ is not or is split in $F$.
\end{proof}
\begin{prop}\label{descent9} For all square free integers $M$, we have (i)
 $A'^{(M)}(\BR)$ has two connected components, (ii) $c_2(A'^{(M)})$ is equal to $1$ if $D_M$ is odd,
 to $2$ if $4$ exactly divides $D_M$, to $2$ if $8$ divides $D_M$ and $M/2 \equiv 3 \mod 4$, and to $4$ if $8$ divides $D_M$ and $M/2 \equiv 1 \mod 4$, (iii) $c_7(A'^{(M)})=2$, (iv) if $p$ is an odd prime dividing $M$, which is inert in $F$, then $c_p(A'^{(M)})$ is equal to $2$ or $4$ according as $p \equiv 1 \mod 4$ or $p \equiv 3 \mod 4$, and (v) if $p$ is an odd prime dividing $M$, which splits in $F$, then $c_p(A'^{(M)})$ is equal to $2$ or $4$ according as $p \equiv 3 \mod 4$ or $p \equiv 1 \mod 4$.
\end{prop}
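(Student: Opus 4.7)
The plan is to follow the same structure used for Proposition \ref{descent8}, with the imaginary quadratic field $F=\BQ(\sqrt{-7})$ replaced by the real quadratic field $F'=\BQ(\sqrt{7})=\BQ(A'[2])$. For assertion (i), one notes that for every squarefree $M$ the non-trivial $2$-torsion of $A'^{(M)}:y^2=x(x^2-42Mx-7M^2)$ has $x$-coordinates $21M\pm 8M\sqrt{7}$, both lying in $F'\subset\BR$; hence the cubic $x^3-42Mx^2-7M^2x$ has three real roots, and $A'^{(M)}(\BR)$ has two connected components.

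For assertions (iii)--(v), note that $A'$, being $\BQ$-isogenous to $A=X_0(49)$ of conductor $49$, has good reduction away from $7$. Thus at $p=7$ and at each odd prime $p\mid M$ (where the twisting character is ramified) the curve $A'^{(M)}$ has bad additive reduction. The analogue for $A'$ of the formula used in \eqref{descent10}, namely $c_p(A'^{(M)})=\#A'^{(M)}(\BQ_p)[2]$, applies at these odd primes, and this order is $2$ or $4$ according as $p$ is non-split or split in $F'$, i.e.\ as $\left(\frac{7}{p}\right)$ equals $-1$ or $+1$. Since $F'$ is ramified at $7$, this immediately gives $c_7=2$. For an odd prime $p\mid M$, the identity $\left(\frac{7}{p}\right)=\left(\frac{-7}{p}\right)\left(\frac{-1}{p}\right)$ translates the splitting of $p$ in $F'$ into the conditions stated in (iv) and (v) on $p\mod 4$ together with the splitting of $p$ in $F$.

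The substantive part is (ii), which requires Tate's algorithm at $p=2$. When $D_M$ is odd, i.e.\ $M\equiv 1\mod 4$, the twist $A'^{(M)}/\BQ_2$ is obtained from $A'$, which has good reduction at $2$, by base change through an unramified quadratic extension, so it still has good reduction at $2$ and $c_2=1$. When $v_2(D_M)=2$, i.e.\ $M\equiv 3\mod 4$, a direct application of Tate's algorithm to the model $y^2=x^3-42Mx^2-7M^2x$ over $\BZ_2$ gives Kodaira type $I_0^*$ with $c_2=\#A'^{(M)}(\BQ_2)[2]=2$, because $\sqrt{7}\notin\BQ_2$. The remaining case $v_2(D_M)=3$, i.e.\ $M=2M_1$ with $M_1$ odd, requires an integral change of variables to produce a minimal model at $2$; the Kodaira type and $c_2$ are then determined by whether the resulting cubic factors completely over $\BZ_2$, giving $c_2=2$ when $M_1\equiv 3\mod 4$, and $c_2=4$ when $M_1\equiv 1\mod 4$, the latter because the cubic then splits fully into linear factors over $\BQ_2$.

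The principal obstacle is the Tate-algorithm bookkeeping at $2$ for the mixed cases in (ii): tracking the changes of variable needed to reduce $y^2=x^3-42Mx^2-7M^2x$ to a minimal model at $2$ when $M$ is even, and correctly identifying the Kodaira type via the solubility modulo higher powers of $2$ of the cubic that arises. Once that is done, the Tamagawa factor is read off from the standard tables.
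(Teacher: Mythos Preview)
Your approach matches the paper's proof essentially line for line: assertion (i) via $F'=\BQ(A'[2])\subset\BR$, assertions (iii)--(v) via the formula \eqref{descent10} together with the splitting behaviour of odd primes in $F'$, and assertion (ii) via Tate's algorithm at $2$, which the paper also invokes without carrying out the details. One small notational point: the paper's formula \eqref{descent10} is stated as $c_p(A'^{(M)})=\#A'(\BQ_p)[2]$ (the untwisted curve), not $\#A'^{(M)}(\BQ_p)[2]$ as you write; these coincide here because $\BQ(A'^{(M)}[2])=\BQ(\sqrt{7})=F'$ for every $M$, but it is worth being aware that the cited lemma is phrased in terms of the original curve.
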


\begin{proof} Assertion (i) follows immediately from the fact that $F' = \BQ(A'[2])$. To establish assertion (ii) when $D_M$ is even, one has to use Tate's algorithm for computing Tamagawa factors. The remaining assertions involving odd primes of bad reduction follow immediately from \eqref{descent10}, on noting that $A'(\BQ_p)[2]$ is of order $2$ or $4$, according as $p$ is not or is split in $F'$.
\end{proof}

We recall that the Birch-Swinnerton-Dyer conjecture is known to be compatible with isogenies, and we now determine the
explicit relationship between the orders of the Tate-Shafarevich groups of
$A^{(M)}$ and $A'^{(M)}$, which follows from this compatibility. We again assume that $M$ is any square free integer, which is written in the form \eqref{con1}. As earlier, we write $r_-(M)$ for the number of prime factors of $R_-$, and $k_-(M)$ for the number of prime factors of $N_-$.
Define $g(M)=\rank_\BZ A^{(M)}(\BQ)=\rank_{\BZ} A'^{(M)}(\BQ).$ Let $\rho(M) \leq g(M)$ be defined by
$$\rho(M) = \textrm{ord}_2([A^{'(M)}(\BQ):
\phi(A^{(M)}(\BQ))+A^{'(M)}(\BQ)_\tor]).$$
Finally, define $a(M)$ to be $1$ if $N_-R_-\equiv
-\sign (M)\mod 4$,  and $0$ if $N_-R_-\equiv
\sign (M)\mod 4$. Write $\Sha(A^{(M)})$ and $\Sha(A'^{(M)})$ for the Tate-Shafarevich groups of $A^{(M)}$ and $A'^{(M)}$, viewed as elliptic curves over $\BQ$.
\begin{prop} Let $M$ be a square-free integer prime to $7$. Then
$\Sha(A^{(M)})$ and $\Sha(A'^{(M)})$ are either both infinite or both finite, and in the latter case we have
$$\frac{\#(\Sha(A'^{(M)}))}{\#(\Sha(A^{(M)}))}=2^{a(M)+k_-(M) - r_-(M)+2\rho(M)-g(M)}.$$
\end{prop}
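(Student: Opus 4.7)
The plan is to apply the isogeny invariance of the Birch--Swinnerton-Dyer leading coefficient to the 2-isogeny $\phi : A^{(M)} \to A'^{(M)}$ constructed at the start of the section. Because $\hat\phi\phi = [2]$, the induced map on Tate--Shafarevich groups has kernel and cokernel killed by $2$, so $\Sha(A^{(M)})$ and $\Sha(A'^{(M)})$ are finite together. Assuming finiteness, and using the equality of $L$-functions for isogenous curves, equality of BSD leading coefficients yields
\begin{equation*}
\frac{|\Sha(A'^{(M)})|}{|\Sha(A^{(M)})|} = \frac{\Omega_\infty(A^{(M)})\,R(A^{(M)})\,c_\infty(A^{(M)})\prod_p c_p(A^{(M)})\,|A'^{(M)}(\BQ)_\tor|^2}{\Omega_\infty(A'^{(M)})\,R(A'^{(M)})\,c_\infty(A'^{(M)})\prod_p c_p(A'^{(M)})\,|A^{(M)}(\BQ)_\tor|^2},
\end{equation*}
so the task reduces to evaluating each ratio on the right.

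I would first dispatch the easy pieces. Direct inspection of the cubics shows $A^{(M)}(\BQ)[2] = A'^{(M)}(\BQ)[2] = \BZ/2\BZ$ (the remaining 2-torsion lives over $F$ and $\BQ(\sqrt 7)$ respectively), and CM by $F$ precludes rational odd torsion, so the torsion ratio is $1$. The functoriality $\hat h_{A'^{(M)}} \circ \phi = 2\hat h_{A^{(M)}}$ on the N\'eron--Tate heights combined with a $\BZ$-basis of $A^{(M)}(\BQ)_{\rm free}$ shows that its $\phi$-image spans a sublattice of $A'^{(M)}(\BQ)_{\rm free}$ of index $2^{\rho(M)}$, the index being a power of $2$ because $\hat\phi\phi = [2]$ forces $2\,A'^{(M)}(\BQ) \subseteq \phi(A^{(M)}(\BQ))$. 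Hence $R(A^{(M)})/R(A'^{(M)}) = 2^{2\rho(M) - g(M)}$.

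Next, I would read off the Tamagawa contributions from Propositions \ref{descent8} and \ref{descent9} prime by prime: at $p = 7$ the two factors agree; at $p\mid R_+$ both equal $2$; at $p \mid R_-$ the ratio is $2/4 = 1/2$; at $p\mid N_+$ both equal $4$; at $p\mid N_-$ the ratio is $4/2 = 2$, aggregating to $2^{k_-(M) - r_-(M)}$. Combining the archimedean ratio $c_\infty(A^{(M)})/c_\infty(A'^{(M)}) = 1/2$ with the 2-adic Tamagawa ratio from Proposition \ref{descent9}, a case analysis on the four residues of $M$ modulo $8$ shows the combined factor equals $2^{a(M) - 1}$: the cases $M \equiv 1,2 \mod 8$ give combined ratio $1/2$ matching $a(M) = 0$, while $M \equiv 3,6 \mod 8$ give combined ratio $1$ matching $a(M) = 1$, with the correspondence to the definition of $a(M)$ flowing from the parity of the number of primes $\equiv 3 \mod 4$ dividing $R_-N_-$.

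Finally comes the delicate input: the ratio of real periods. Identifying $A^{(M)}(\BC) = \BC/\Lambda$ and $A'^{(M)}(\BC) = \BC/\Lambda'$ via the N\'eron differentials, the isogeny $\phi$ realises $\Lambda$ as an index-$2$ sublattice of $\Lambda'$. Since $A'^{(M)}(\BR)$ has two components, $\Lambda'$ is rectangular with real period $\Omega_\infty(A'^{(M)})$; whereas $A^{(M)}(\BR)$ is connected, which forces $\Lambda$ to be the unique non-rectangular index-$2$ sublattice of $\Lambda'$. An elementary check of the three index-$2$ sublattices of a rectangular lattice shows the non-rectangular one has real part $2\Omega_\infty(A'^{(M)})\BZ$, giving $\Omega_\infty(A^{(M)})/\Omega_\infty(A'^{(M)}) = 2$. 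Multiplying the four contributions produces the exponent
\[
1 + (2\rho(M) - g(M)) + (k_-(M) - r_-(M)) + (a(M) - 1) = a(M) + k_-(M) - r_-(M) + 2\rho(M) - g(M),
\]
as claimed. The main obstacle is this last step: one must verify that the Weierstrass equations used in section 3 and the genuine minimal models of $A^{(M)}$ and $A'^{(M)}$ produce compatible N\'eron differentials so that the identity $\phi^*\omega_{A'^{(M)}} = \pm\omega_{A^{(M)}}$ continues to hold on the Néron models. Any residual scaling factor between the paper's models and the minimal ones would have to be tracked simultaneously in $\Omega_\infty$ and $c_2$ and shown to cancel in the final ratio, which takes a careful but routine model-theoretic bookkeeping.
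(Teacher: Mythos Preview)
Your proposal is correct and follows the same overall strategy as the paper: invoke isogeny invariance of the Birch--Swinnerton-Dyer leading term for $\phi:A^{(M)}\to A'^{(M)}$, then compute the torsion, regulator, Tamagawa, and period ratios separately and assemble. Your regulator, torsion, and odd-prime Tamagawa computations match the paper's exactly, and your $c_2$ case split (though phrased only for four residues mod $8$) agrees with the paper's claim $Tam(A^{(M)})/Tam(A'^{(M)})=2^{a(M)+k_-(M)-r_-(M)}$ once one notes that $M\equiv 5,7\bmod 8$ behave like $M\equiv 1,3\bmod 8$ and that the sign of $M$ enters only through $a(M)$.

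The one genuine methodological difference is the real-period ratio. The paper does not argue via sublattices; instead it defines $\Omega_\infty$ as the integral over the \emph{entire} real locus (thereby absorbing $c_\infty$), invokes Pal's explicit formulae expressing $\Omega_\infty(A^{(M)})$ and $\Omega_\infty(A'^{(M)})$ in terms of the fixed periods $\omega(A),\omega^-(A),\omega(A'),\omega^-(A')$ of $A$ and $A'$, and then uses the single relation $\omega(A)=2\omega(A')$, $\omega^-(A)=2\omega^-(A')$ to get the ratio equal to $1$. This sidesteps precisely the issue you flag at the end, namely whether $\phi^*\omega_{A'^{(M)}}=\pm\omega_{A^{(M)}}$ holds for the actual N\'eron differentials after twisting. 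Your lattice argument is valid but does depend on that compatibility, so the paper's route via Pal is slightly more self-contained; conversely, your argument is more conceptual and would transfer to other $2$-isogenies without recomputing twist-period formulae.
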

\begin{proof} Let $\Omega_\infty(A^{(M)})$ (resp.  $\Omega_\infty(A'^{(M)})$) be the integral of a \Neron differential over the whole of $A^{(M)}(\BR)$ (resp. $A'^{(M)}(\BR)$).
Define
$$
Tam(A^{(M)}) = \prod_p c_p(A^{(M)}), \, \, \, \, Tam(A'^{(M)}) = \prod_p c_p(A'^{(M)}),
$$
where the products are taken over all primes of bad reduction. Now the finiteness of one of the Tate-Shafarevich groups implies the finiteness of the other one.
Assuming both are finite, the invariance of the Birch-Swinnerton-Dyer conjecture under isogeny gives
$$
\frac{Tam(A^{(M)})\cdot \Omega_\infty(A^{(M)})\cdot R(A^{(M)})\cdot
\#(\Sha(A^{(M)}))}{\#(A^{(M)}(\BQ)_\tor)^2}=\frac{Tam(A'^{(M)})\cdot \Omega_\infty(A^{'(M)})\cdot R(A^{'(M)})\cdot \#(\Sha(A'^{(M)}))}{\#(A'^{(M)}(\BQ)_\tor)^2}.
$$
Here the regulator terms $R(A^{'(M)})$ and $R(A^{(M)})$
are volumes with respect to the \Neron-Tate pairing, and we have
$$\begin{aligned}
R(A'^{(M)})&=\Vol(A'^{(M)}(\BQ))^2=2^{-2\rho(M)}
\Vol(\varphi(A^{(M)}(\BQ)))^2\\
&=
2^{-2\rho(M)+g(M)}\Vol(A^{(M)}(\BQ))^2=2^{-2\rho(M)+g(M)}R(A^{(M)}).\end{aligned}$$
Thus
$$R(A^{(M)})/R(A'^{(M)})=2^{2\rho(M)-g(M)}.$$
It also follows easily from Propositions \ref{descent8} and \ref{descent9} that
$$\frac{Tam(A^{(M)})}{Tam(A'^{(M)})}=2^{a(M)+ k_-(M) - r_-(M)}.$$
Also, we have
 $$A^{(M)}(\BQ)_\tor=A'^{(M)}(\BQ)_\tor\cong \BZ/2\BZ.$$
Moreover, let us define $\omega(A)$ (resp. $\omega^-(A)$) and  $\omega(A')$
(resp. $\omega^{-}(A')$) to be the least positive real period (resp. purely imaginary period in the upper half plane) of
the \Neron differentials on $A$ (resp. on $A'$). We then have the following period relations.  Define
$u=1$ if $M\equiv 1\mod 4$, $u=1/2$ if otherwise. Then it is not difficult to see (see \cite{Pal}) that
$$\Omega_\infty(A^{(M)})=\begin{cases} \frac{u}{\sqrt{M}} \omega(A),
&\text{if $M>0$},\\
\frac{u}{\sqrt{M}}\omega^-(A),\quad& \text{if $M<0$}.\end{cases}
\qquad\qquad
\Omega_\infty(A^{'(M)})=\begin{cases} \frac{u}{\sqrt{M}} 2\omega(A'),\quad &\text{if $M>0$},\\
\frac{u}{\sqrt{M}}2\omega^-(A'), &\text{if $M<0$}.\end{cases}$$ But
it is also easy to see that $\omega(A)= 2\omega(A')$, and
$\omega^-(A)=2\omega^-(A')$, whence
$$\Omega_\infty(A^{(M)})/\Omega_\infty(A'^{(M)})=1,$$
and the proof is complete.

\end{proof}

\section{Zhao's Method}

The aim of this section is to show that one can use Zhao's method
(see, for example, \cite{CKLZ}, where one can also find references
to his earlier papers) to establish a few of the analytic results
which would follow from the descent calculations of the previous
section if we knew (but we do not know) the $2$-part of the
conjecture of Birch and Swinnerton-Dyer. Specifically, we prove
Theorem \ref{main3} and the analytic part of Theorem \ref{main4}.
Throughout this section, $M$ will denote a square free element of
the ring of integers $\CO$ of $F$ (it will not, in general, be a
rational integer), which we will always assume satisfies $M \equiv 1
\mod 4\CO.$

 \medskip

We begin by establishing some preliminary results, which will be
needed for the proof of the above theorems. Recall that $\CO$
denotes the ring of integers of the field $F = \BQ(\sqrt{-7})$. We now view our elliptic curve $A = X_0(49)$ as being defined over $F$. If $M$ is a square
free element of $\CO$, which is $\equiv 1 \mod \ 4$, we write
$\psi_M$ for the Grossencharacter of $A$ twisted by the quadratic
extension $F(\sqrt{M})/F.$  As in \cite{CKLZ}, let $\pi_1, \ldots,
\pi_m$ be an arbitrary sequence of distinct prime elements of $\CO$
such that, for all $m \geq 1$, we have (i) $(\pi_m, \sqrt{-7})=1$,
and (ii) $\pi_m \equiv 1 \mod 4$. Recall that the period lattice
of the \Neron differential on our minimal Weierstrass equation for
$A$ is given by $\mathfrak L = \Omega_\infty \CO$.  For all $m \geq 1$,
define
$$
\mathfrak M_m = \pi_1\cdots\pi_m, \, \, g_m = \mathfrak M_m \sqrt{-7}, \, \,
\mathfrak g_m = g_m\CO,
$$
and let $\mathfrak R_m$ be the ray class field of $F$ modulo $\mathfrak
g_m$, which coincides with the field $F(A_{g_m})$ (see Lemma 7 of
\cite{C}). Then, as is explained in \cite{CKLZ},  the field $\mathfrak
J_m = F(\sqrt{\pi_1}, \ldots, \sqrt{\pi_m})$ is a subfield of $\mathfrak
R_m$. As in \cite{CKLZ}, let ${\mathcal E}_1^*(z, \mathfrak L)$ be the
non-holomorphic Eisenstein series of weight 1 for the lattice $\mathfrak
L$. The following result strengthens Theorem 3.1 of \cite{CKLZ}. For all $m \geq 1$, we define
$$
\Psi_m = Tr_{\mathfrak R_m/\mathfrak J_m}(g_m^{-1}{\mathcal E}_1^*(\Omega_\infty/g_m, \mathfrak L)).
$$
\begin{prop}\label{i}
For all $m\geq 1$, $\Psi_m$ is integral at all places of $\mathfrak J_m$ above $2$.
\end{prop}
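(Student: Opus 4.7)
The plan is to bound the $2$-adic denominator of $\Psi_m$ by combining the CM structure of $A$ at the split prime $2$ of $F$ with the elliptic-unit interpretation of $g_m^{-1}\mathcal{E}_1^*(\Omega_\infty/g_m, \mathfrak L)$. Since $-7 \equiv 1 \bmod 8$, we have $2\CO = \mathfrak p \bar{\mathfrak p}$, and $A$ has good ordinary reduction at both $\mathfrak p$ and $\bar{\mathfrak p}$. Because $g_m = \mathfrak M_m \sqrt{-7}$ is coprime to $2$, both $\mathfrak R_m = F(A_{g_m})$ and its subfield $\mathfrak J_m = F(\sqrt{\pi_1}, \ldots, \sqrt{\pi_m})$ are unramified at all primes above $2$, and their decomposition groups there act through Frobenius on the formal group of $A$.

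First, I would recall (following Robert and Coates--Wiles) the standard identification of $g_m^{-1}\mathcal{E}_1^*(\Omega_\infty/g_m, \mathfrak L)$, up to a Robert-type auxiliary-integer adjustment involving a fixed $a\in\CO$ coprime to $6g_m$, with a derivative at the torsion point $\Omega_\infty/g_m$ of an elliptic unit $\Theta_m^{(a)} \in \mathfrak R_m^\times$ attached to the modulus $g_m$. Taking the trace down to $\mathfrak J_m$ turns the sum of these derivatives into the logarithmic derivative of the norm unit $N_m^{(a)} = N_{\mathfrak R_m/\mathfrak J_m}\Theta_m^{(a)} \in \mathfrak J_m^\times$. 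By the general theory of Robert elliptic units, $N_m^{(a)}$ is a global unit of $\mathfrak J_m$ that is integral, together with its inverse, at every prime of $\mathfrak J_m$ lying above $2$, since $g_m$ and $a$ are both coprime to $2$.

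Next I would pass to the local analysis at a prime $\mathfrak P$ of $\mathfrak J_m$ above $2$. Because $\mathfrak J_m/F$ is unramified at $\mathfrak P$, a uniformizer of $F_{\mathfrak p}$ (or $F_{\bar{\mathfrak p}}$) is a uniformizer of $\mathfrak J_{m,\mathfrak P}$, and the torsion point $\Omega_\infty/g_m$, having order coprime to $2$, lies in the formal group $\hat A$ over $\CO_{\mathfrak P}$ as an integral point, where it admits a power-series expansion in a formal parameter of $\hat A$ with coefficients in $\CO_{\mathfrak P}$. Since $N_m^{(a)}$ is a unit at $\mathfrak P$, its formal $\mathcal{O}_{\mathfrak P}$-expansion in this parameter is a power series whose evaluation at the torsion point gives an element of $\CO_{\mathfrak P}$. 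The auxiliary $a$-correction is an integer multiple that is absorbed without introducing $2$-denominators, so the algebraic part of $\Psi_m$ is $\mathfrak P$-integral.

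The main obstacle, and the one most distinct from the odd-prime case treated in Theorem 3.1 of \cite{CKLZ}, is the non-holomorphic Kronecker correction (the $\bar z/A(\mathfrak L)$ term) inside $\mathcal{E}_1^*$, which is not the logarithmic derivative of an algebraic function. The resolution is to track this correction through the trace: on each conjugate its value is a specific $\CO$-linear combination of $\Omega_\infty/g_m$ and $\bar\Omega_\infty/g_m$, and the trace from $\mathfrak R_m$ to $\mathfrak J_m$ collapses, via the explicit description of $\Gal(\mathfrak R_m/\mathfrak J_m)$ as the kernel of the product of quadratic characters attached to the $\sqrt{\pi_i}$, to a short sum which one checks directly to be $\mathfrak P$-integral. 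The key point is that this sum involves only $\CO$-multiples of $\Omega_\infty/g_m$ (coming from the characters of $\Gal(\mathfrak R_m/\mathfrak J_m)$), all of which live in the integral locus of $\hat A$ at $\mathfrak P$. Putting the two contributions together yields the claimed integrality of $\Psi_m$ at every place of $\mathfrak J_m$ above $2$.
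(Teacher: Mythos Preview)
Your proposal has a genuine gap at the key step. You assert that taking the trace from $\mathfrak R_m$ to $\mathfrak J_m$ ``turns the sum of these derivatives into the logarithmic derivative of the norm unit $N_m^{(a)}$.'' But the trace is a sum of values of $\Theta'/\Theta$ at the \emph{distinct} Galois conjugates $z_\sigma$ of the torsion point $\Omega_\infty/g_m$; that is, $\sum_\sigma \Theta'(z_\sigma)/\Theta(z_\sigma)$. This is not the logarithmic derivative of $\prod_\sigma \Theta(z_\sigma)$ evaluated at any single point, so the fact that $N_m^{(a)}$ is a global unit at primes above $2$ gives you no direct handle on the $2$-integrality of $\Psi_m$. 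The elliptic-unit formalism controls the \emph{values} $\Theta(z_\sigma)$, not the values of $\Theta'/\Theta$. Your treatment of the non-holomorphic correction is also not right: the conjugate terms involve $\overline{z_\sigma}$ for varying $z_\sigma$, which are transcendental; they are not ``$\CO$-multiples of $\Omega_\infty/g_m$,'' and the phrase ``live in the integral locus of $\hat A$'' does not apply to them.

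The paper's argument is quite different and more elementary. One first passes to the intermediate field $\mathfrak S_m$ fixed by the involution $\tau$ acting as $-1$ on $A_{\mathfrak M_m}$; writing $\Omega_\infty/g_m = z_1 + z_2$ with $z_1$ a $\sqrt{-7}$-division point and $z_2$ an $\mathfrak M_m$-division point, the partial trace is $\Phi_m = g_m^{-1}\bigl(\mathcal{E}_1^*(z_1+z_2,\mathfrak L)+\mathcal{E}_1^*(z_1-z_2,\mathfrak L)\bigr)$. The addition formula for the Weierstrass $\zeta$-function then gives
\[
\mathcal{E}_1^*(z_1+z_2,\mathfrak L)+\mathcal{E}_1^*(z_1-z_2,\mathfrak L)
= 2\mathcal{E}_1^*(z_1,\mathfrak L) + \frac{2y(P_1)+x(P_1)}{x(P_1)-x(P_2)},
\]
in which the non-holomorphic pieces have cancelled by symmetry. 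The term $2\mathcal{E}_1^*(z_1,\mathfrak L)$ is $2$-integral by the result of \cite{CKLZ} (since $P_1$ has odd order $7$), and the second term is $2$-integral because $P_1,P_2$ are points of odd order on a curve with good reduction at $2$, so their coordinates are $2$-integral and $x(P_1)-x(P_2)$ is a $2$-unit (reduction modulo any prime above $2$ is injective on prime-to-$2$ torsion). Since $\Psi_m$ is then a further trace of $\Phi_m$, the integrality follows. The crucial idea you are missing is this addition-formula trick, which simultaneously separates the $\sqrt{-7}$-torsion from the $\mathfrak M_m$-torsion and eliminates the non-holomorphic term.
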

We now give the proof of Proposition \ref{i}, beginning
with two preliminary lemmas. Put $f = \sqrt{-7}$, \, $\mathfrak f =
f\CO$, and define $\mathfrak F = F(A_f)$. Since $\mathfrak f$ is the
conductor of the Grossencharacter of $A$, $\mathfrak F$ coincides with
the ray class group of $F$ modulo $\mathfrak f$ (see Lemma 7 of
\cite{C}), and so is an extension of $F$ of degree $3$.  Moreover, the
action of the Galois group of $\mathfrak R_m$  over $\mathfrak F$ on
$A_{\mathfrak M_m}$ gives rise to an injection
\begin{equation}\label{ir}
j: Gal(\mathfrak R_m/\mathfrak F) \to Aut_\CO(A_{\mathfrak M_m}) = (\CO/\mathfrak
M_m\CO)^{\times}.
\end{equation}
\begin{lem}\label{irl} The homomorphism $j$ given by \eqref{ir} is an isomorphism.
\end{lem}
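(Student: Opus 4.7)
The plan is to verify the isomorphism by comparing orders and invoking injectivity. By Lemma 7 of \cite{C}, we are told $\mathfrak{R}_m$ is the ray class field of $F$ modulo $\mathfrak g_m$ and $\mathfrak F$ is the ray class field of $F$ modulo $\mathfrak f$. Since $F=\BQ(\sqrt{-7})$ has class number one with unit group $\CO^{\times}=\{\pm 1\}$, class field theory gives a short exact sequence
$$
1 \lra \ov{\{\pm 1\}} \lra (\CO/\mathfrak g_m)^{\times} \lra \Gal(\mathfrak R_m/F) \lra 1,
$$
where the left-hand arrow is reduction mod $\mathfrak g_m$; this reduction is injective because $\mathfrak g_m$ is divisible by $\mathfrak f$ and $\CO/\mathfrak f\cong \BF_7$, so $-1\not\equiv 1\pmod{\mathfrak f}$. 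The analogous sequence holds for $\mathfrak F/F$ with $\mathfrak f$ in place of $\mathfrak g_m$, giving $[\mathfrak F:F]=|(\CO/\mathfrak f)^{\times}|/2=3$.

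Next, since $\mathfrak M_m$ is coprime to $\mathfrak f$, the Chinese Remainder Theorem furnishes a decomposition $(\CO/\mathfrak g_m)^{\times}\cong (\CO/\mathfrak M_m\CO)^{\times}\times (\CO/\mathfrak f)^{\times}$. Combining this with the two exact sequences above, I obtain
$$
[\mathfrak R_m:\mathfrak F] \;=\; \frac{[\mathfrak R_m:F]}{[\mathfrak F:F]} \;=\; \frac{|(\CO/\mathfrak g_m)^{\times}|}{|(\CO/\mathfrak f)^{\times}|} \;=\; |(\CO/\mathfrak M_m\CO)^{\times}|,
$$
which is exactly the order of the target of $j$.

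Finally, to see that $j$ is injective: if $\sigma\in\Gal(\mathfrak R_m/\mathfrak F)$ acts trivially on $A_{\mathfrak M_m}$, then it fixes $\mathfrak F(A_{\mathfrak M_m})$. But because $\gcd(\mathfrak M_m,\mathfrak f)=1$, the group scheme decomposition $A_{g_m}=A_{\mathfrak M_m}\oplus A_{\mathfrak f}$ yields $F(A_{g_m})=F(A_{\mathfrak f})\cdot F(A_{\mathfrak M_m})=\mathfrak F(A_{\mathfrak M_m})$, so $\sigma$ fixes all of $\mathfrak R_m$ and hence is trivial. Thus $j$ is an injective homomorphism between two finite groups of equal cardinality, whence it is an isomorphism. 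There is no serious obstacle here; the only point requiring care is checking that the unit group $\{\pm 1\}$ maps injectively into $(\CO/\mathfrak f)^{\times}$, which is immediate from $\CO/\mathfrak f\cong \BF_7$.
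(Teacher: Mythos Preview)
Your proof is correct but proceeds along a different line from the paper's. The paper argues via Lubin--Tate theory: for each prime $\mathfrak p_j=\pi_j\CO$ the formal group of $A$ at $\mathfrak p_j$ is Lubin--Tate, so $\Gal(F(A_{\mathfrak p_j})/F)\cong(\CO/\mathfrak p_j)^\times$ with $\mathfrak p_j$ totally ramified in this extension, while $\mathfrak p_j$ is unramified in $\mathfrak F/F$; the ramification disjointness then forces the degrees to multiply correctly. You instead exploit directly the identification (already supplied by the cited Lemma~7 of \cite{C}) of $\mathfrak R_m$ and $\mathfrak F$ as ray class fields, compute $[\mathfrak R_m:\mathfrak F]$ via the ray class group formula for a class-number-one field together with the Chinese Remainder decomposition $(\CO/\mathfrak g_m)^\times\cong(\CO/\mathfrak M_m\CO)^\times\times(\CO/\mathfrak f)^\times$, and conclude by comparing cardinalities. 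Your route is arguably more self-contained, since it avoids invoking Lubin--Tate theory and uses only the class-field-theoretic input that is already on the table; the paper's route, on the other hand, gives the extra information that each $\mathfrak p_j$ is totally ramified in $F(A_{\mathfrak p_j})/F$, though this is not needed for the lemma itself.

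One small redundancy: the injectivity of $j$ is already asserted in the paper's setup (the display \eqref{ir} is introduced as an injection), so your separate verification of injectivity via $\mathfrak R_m=\mathfrak F(A_{\mathfrak M_m})$ is not strictly required---but it is correct and does no harm.
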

\begin{proof} Since $A$ has good reduction at $\mathfrak p_j = \pi_j\CO$, the formal group of $A$ at $\mathfrak p_j$
is a Lubin-Tate group. It follows that the Galois group of the
extension $F(A_{\mathfrak p_j})/F$ is isomorphic to $(\CO/\mathfrak
p_j)^{\times}$, and that $\mathfrak p_j$ is totally ramified in this
extension. Also, $\mathfrak p_j$ does not ramify in the extension $\mathfrak
F/F$. The assertion of the lemma now follows easily.\end{proof}

\medskip

Since $(f, \mathfrak M_m)=1$, we can find $\alpha, \beta$ in $\CO$ such
that $1 = \alpha \mathfrak M_m + \beta f$. Define
$$
z_1 = \alpha\Omega_{\infty}/f, \, \, z_2 =
\beta\Omega_{\infty}/\mathfrak M_m.
$$
Let $\mathfrak p(z, \mathfrak L)$ denote the Weierstrass $\mathfrak p$-function
of the lattice $\mathfrak L$. Then we have
\begin{equation}\label{m1}
\mathfrak p(z, \mathfrak L) = x - 1/4, \, \, \mathfrak p'(z, \mathfrak L) = 2y + x.
\end{equation}
Write $P_1$ and $P_2$ for the points on $E$ corresponding to $z_1$
and $z_2$. We let $\tau$ be the inverse image of the class $-1 \,
mod \, \mathfrak M_m\CO$ under the isomorphism \eqref{ir}, and we define
$\mathfrak S_m$ to be the fixed field of $\tau$, so that the extension
$\mathfrak R_m/\mathfrak S_m$ has degree 2. Put $\Phi_m = Tr_{\mathfrak
R_m/\mathfrak S_m}(\Psi_m)$. Clearly, we then have
\begin{equation}\label{m2}
\Phi_m = {g_m}^{-1}({\mathcal E}_1^*(z_1 + z_2, \mathfrak L) \, + \,
{\mathcal E}_1^*(z_1 - z_2, \mathfrak L)).
\end{equation}
The next lemma is the heart of our integrality proof.
\begin{lem}\label{m3} We have
\begin{equation}\label{m4}
{\mathcal E}_1^*(z_1 + z_2, \mathfrak L) \, + \, {\mathcal E}_1^*(z_1 -
z_2, \mathfrak L) = 2{\mathcal E}_1^*(z_1, \mathfrak L) + (2y(P_1) +
x(P_1))/(x(P_1) - x (P_2)).
\end{equation}
\end{lem}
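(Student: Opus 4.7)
The plan is to deduce this identity from the classical addition formula for the Weierstrass $\zeta$-function attached to $\mathfrak{L}$, together with the fact that $\mathcal{E}_1^*(z,\mathfrak{L})$ differs from $\zeta(z,\mathfrak{L})$ only by a term linear in $z$.

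First I would recall that, by the definition given in \cite{CKLZ}, $\mathcal{E}_1^*(z,\mathfrak{L}) = \zeta(z,\mathfrak{L}) - s(\mathfrak{L})\, z$ for a certain constant $s(\mathfrak{L})$ depending only on the lattice. The combination
$$
\mathcal{E}_1^*(z_1+z_2,\mathfrak{L}) + \mathcal{E}_1^*(z_1-z_2,\mathfrak{L}) - 2\,\mathcal{E}_1^*(z_1,\mathfrak{L})
$$
kills the correction term, because $(z_1+z_2)+(z_1-z_2)-2z_1 = 0$. Hence the identity to be proved reduces to
$$
\zeta(z_1+z_2,\mathfrak{L}) + \zeta(z_1-z_2,\mathfrak{L}) - 2\,\zeta(z_1,\mathfrak{L}) = \frac{2y(P_1)+x(P_1)}{x(P_1)-x(P_2)}.
$$

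Next, I would apply the standard addition formula
$$
\zeta(u+v,\mathfrak{L}) - \zeta(u,\mathfrak{L}) - \zeta(v,\mathfrak{L}) = \tfrac{1}{2}\frac{\wp'(u,\mathfrak{L})-\wp'(v,\mathfrak{L})}{\wp(u,\mathfrak{L})-\wp(v,\mathfrak{L})}
$$
to the pairs $(u,v) = (z_1,z_2)$ and $(u,v) = (z_1,-z_2)$, then add. Using that $\wp$ is even and $\wp'$ is odd, the $\zeta(\pm z_2)$ terms on the right cancel in pairs, and the two fractions combine into
$$
\zeta(z_1+z_2,\mathfrak{L}) + \zeta(z_1-z_2,\mathfrak{L}) - 2\,\zeta(z_1,\mathfrak{L}) = \frac{\wp'(z_1,\mathfrak{L})}{\wp(z_1,\mathfrak{L})-\wp(z_2,\mathfrak{L})}.
$$

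Finally, I would substitute the formulae from \eqref{m1}, namely $\wp(z_i,\mathfrak{L}) = x(P_i) - 1/4$ and $\wp'(z_1,\mathfrak{L}) = 2y(P_1)+x(P_1)$, so that $\wp(z_1,\mathfrak{L}) - \wp(z_2,\mathfrak{L}) = x(P_1)-x(P_2)$ and the right-hand side becomes exactly $(2y(P_1)+x(P_1))/(x(P_1)-x(P_2))$. Combining with the first step proves \eqref{m4}.

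There is no serious obstacle here: the content of the lemma is the classical addition theorem, and the only point requiring any care is checking that the linear correction $-s(\mathfrak{L})\,z$ in the passage from $\zeta$ to $\mathcal{E}_1^*$ really does cancel out of this particular symmetric combination; this is immediate from linearity.
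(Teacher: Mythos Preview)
Your argument is essentially the same as the paper's and is correct. One small correction: the non-holomorphic Eisenstein series used here (see \cite{GS}, Prop.~1.5) satisfies
\[
\mathcal{E}_1^*(z,\mathfrak{L}) = \zeta(z,\mathfrak{L}) - s_2(\mathfrak{L})\,z - \mathfrak{A}(\mathfrak{L})^{-1}\,\bar{z},
\]
so the correction is $\BR$-linear in $z$ (a holomorphic and an antiholomorphic part), not just $-s(\mathfrak{L})z$. This does not affect your proof, since $(z_1+z_2)+(z_1-z_2)-2z_1=0$ kills both the $z$- and the $\bar z$-terms for exactly the reason you give; just be sure to state the formula correctly.
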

\begin{proof} Let $\zeta(z, \mathfrak L)$ denote the Weierstrass zeta function of $\mathfrak L$.  The following identity is classical
$$
{\mathcal E}_1^*(z, \mathfrak L) = \zeta(z, \mathfrak L) - z s_2(\mathfrak L) -
\bar{z}\mathfrak A(\mathfrak L)^{-1},
$$
(see, for example, Prop. 1.5 of \cite{GS}, where the definitions of
the constants $s_2(\mathfrak L)$ and $\mathfrak A(\mathfrak L)$ are also given).
Similarly, we have the addition formula
$$
\zeta(u+v, \mathfrak L) = \zeta(u, \mathfrak L) + \zeta(v, \mathfrak L) +
\frac{1}{2} \frac{\mathfrak p'(u, \mathfrak L) -\mathfrak p'(v,\mathfrak L)}{\mathfrak
p(u, \mathfrak L) -\mathfrak p(v ,\mathfrak L)}.
$$
We apply the first of these formulae when $z = z_1 + z_2$, and $z =
z_1 - z_2$, and then the second of these formulae with $u = z_1, v =
z_2$, and $u = z_1, v = -z_2$, and use the fact that $\zeta(z, \mathfrak
L), {\mathfrak p}'(z, \mathfrak L)$ are both odd functions of $z$, whereas
$\mathfrak p(z, \mathfrak L)$ is an even function of $z$.

We can now complete the proof of Proposition \ref{i}. Since $\mathfrak
S_m$ is Galois over $\mathfrak J_m$, it plainly suffices to show that
$\Phi_m$ is integral at each place of the field $\mathfrak S_m$ above $2$.
In view of \eqref{m2} and \eqref{m4} and the fact that $(g_m, 2)=1$,
it suffices to show that both
$$
C = 2{\mathcal E}_1^*(z_1, \mathfrak L), \, \,  \,  D_m = (2y(P_1) +
x(P_1))/(x(P_1) - x (P_2))
$$
are integral at all places of $\mathfrak S_m$ above $2$. Now, since $z_1$
corresponds to the point $P_1$ of finite odd order on $A$, the
arguments given in section 3 of \cite{CKLZ} show that $C$ is indeed
integral at all places above $2$. Moreover, as $A$ has good reduction
at 2, and $P_1$ and $P_2$ are points on $A$ of finite odd order, we
see that all of the coordinates $x(P_1), x(P_2), y(P_1)$ are
integral at all places above $2$. Suppose there was a place $w$ of
$\mathfrak S_m$ above $2$ where $\ord_w(x(P_1) - x(P_2)) > 0$. Then, under
reduction modulo $w$, we would have that the $x$-coordinates of the
reductions $\bar{P_1}$ and $\bar{P_2}$ of $P_1$ and $P_2$ would be
equal. But, by the explicit group law for the reduced curve, this
means that either $\bar{P_1} = \bar{P_2}$ or $\bar{P_1} + \bar{P_2}
= 0$. This would then imply that either $P_1 - P_2$ or $P_1 + P_2$
are equal to a point of finite order on the formal group of $A$ at
$w$, and since all points of finite order on the formal group $A$ at
$w$ are necessarily of $2$-power order, this is plainly impossible
because $(f, \mathfrak M_m) = 1$. Hence we conclude that $x(P_1) -
x(P_2)$ is a unit at all places of $\mathfrak S_m$ above $2$, and so $D_m$
is integral at all places of $\mathfrak S_m$ above $2$. This completes
the proof.

\end{proof}

For each integer $m \geq 0$, let $\mathfrak D_m$ be the set of all
divisors of $\mathfrak M_m$, which are given by the product over all
elements of any subset of $S_m=\{\pi_1, \ldots, \pi_m\}$. For $M \in
\mathfrak D_m$, we write $L_{S_m}(\bar{\psi}_M, s)$ for the imprimitive
complex $L$-function of the complex conjugate of the
Grossencharacter $\psi_M$, where by imprimitive we mean that the Euler factors of the primes in
the set $S_m$ are omitted from its Euler product. Then, as is shown in
Theorem 2.4 of \cite{CKLZ}, we have the identity
\begin{equation}\label{av}
\sum_{M \in {\mathfrak D}_{m}} L_{S_m}(\bar{\psi}_M,
1)/{\Omega_{\infty}}  = 2^m\Psi_m.
\end{equation}
We now make repeated use of this identity and Proposition \ref{i} to
establish the analytic parts of Theorems \ref{main3} and
\ref{main4}, via a series of induction arguments. The next result is
the key to Theorem \ref{main3}.

\begin{thm}\label{ii} Let $q_1,..., q_r$ be $r \geq 0$ distinct primes, which are $\equiv 1 \mod 4$ and inert in $F$, and put $R = q_1\cdots q_r$. Then
\begin{equation}\label{iii}
ord_2(L^{(alg)}(A^{(R)}, 1)) =  r-1.
\end{equation}
In particular, we have $L(A^{(R)}, 1) \neq 0$.
\end{thm}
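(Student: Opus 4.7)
The plan is to induct on $r$, using the averaging identity \eqref{av} together with the integrality of Proposition \ref{i}. The base case $r = 0$ is the known evaluation $L(A,1)/\Omega_\infty = 1/2$, giving $\ord_2 L^{(alg)}(A,1) = -1$. For the inductive step I specialise $\pi_i = q_i$ in \eqref{av}, which is permitted since each $q_i$ is coprime to $\sqrt{-7}$ and $\equiv 1 \pmod 4$. For any divisor $M$ of $R$ and any $q_i \in S_r$, the local Euler factor of $\bar{\psi}_M$ at $q_i$ equals $(1+q_i^{-1})^{-1} = q_i/(q_i+1)$ when $q_i \nmid M$ (using $a_{q_i}(A^{(M)}) = 0$, which holds because $q_i \nmid M$ and $a_{q_i}(A) = 0$), and equals $1$ when $q_i \mid M$. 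Using the period relation $\Omega_\infty(A^{(M)}) = \Omega_\infty/\sqrt{M}$ valid for $M \equiv 1 \pmod 4$, and writing $T(M) := L^{(alg)}(A^{(M)},1)$, the identity \eqref{av} becomes, after multiplying both sides by $\sqrt{R}$,
\begin{equation*}
T(R) + \sum_{\substack{M \mid R \\ M \neq R}} T(M)\, \sqrt{R/M} \prod_{q \mid R/M} \frac{q+1}{q} \;=\; 2^r \sqrt{R}\, \Psi_r .
\end{equation*}

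Write $(q+1)/q = 2\beta_q$ with $\beta_q := (q+1)/(2q)$, which is itself a $2$-adic unit since $q \equiv 1 \pmod 4$. By the inductive hypothesis each $T(M) = 2^{\omega(M)-1}\, u_M$ with $u_M$ a $2$-adic unit (here $\omega(M)$ denotes the number of prime divisors of $M$), so every summand on the left has $2$-adic valuation exactly $r-1$. Dividing through by $2^{r-1}$ and rearranging,
\begin{equation*}
T(R)/2^{r-1} \;=\; 2\sqrt{R}\,\Psi_r \;-\; \sum_{\substack{M \mid R \\ M \neq R}} u_M\, \sqrt{R/M} \prod_{q \mid R/M} \beta_q.
\end{equation*}
I then reduce modulo a place $v$ above $2$ in $L := \BQ(\sqrt{q_1},\ldots,\sqrt{q_r})$, which is unramified at $2$ since each $q_i \equiv 1 \pmod 4$. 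The LHS of \eqref{av} visibly lies in $L$, so $\Psi_r \in L$ as well, and Proposition \ref{i} gives $\ord_v(\Psi_r) \geq 0$; hence $2\sqrt{R}\,\Psi_r \equiv 0 \pmod v$. The crucial observation is that in any residue field of characteristic $2$ one has $\overline{\sqrt{d}} = 1$ for every odd $d$, because $\overline{\sqrt{d}}^{\,2} = \bar d = 1$ and $1$ admits a unique square root in characteristic $2$. Combined with $\bar{u}_M = \bar{\beta}_q = 1$ (as $2$-adic units), every one of the $2^r - 1$ summands reduces to $1 \pmod v$, so the sum reduces to $2^r - 1 \equiv 1 \pmod v$. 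Hence $T(R)/2^{r-1} \equiv -1 \pmod v$, which forces $\ord_2 T(R) = r - 1$ and also verifies the strengthened inductive hypothesis at $M = R$.

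The main technical points requiring care are the Euler factor computation at $q_i \in S_r$ with $q_i \nmid M$ (routine from $a_{q_i} = 0$) and the characteristic-$2$ identity $\overline{\sqrt{d}} = 1$, which is the structural reason that the $2^r - 1$ summands cannot cancel below their common valuation; without it the proof would fail, since the $\ord_2 = r - 1$ of each individual term is precisely balanced and could in principle be killed by cancellation. The non-vanishing $L(A^{(R)}, 1) \neq 0$ follows immediately from the finiteness of $\ord_2 T(R)$.
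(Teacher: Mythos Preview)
Your proof is correct and follows essentially the same approach as the paper: induction on $r$ via the averaging identity \eqref{av}, the integrality from Proposition~\ref{i}, the Euler-factor computation \eqref{ss1}, and the characteristic-$2$ observation that $\sqrt{d}\equiv 1$ for odd $d$. The only difference is packaging---you treat all $2^r-1$ proper-divisor terms uniformly and reduce directly modulo $v$, whereas the paper separates the $M=1$ term and proves a separate lemma that $\ord_v(W_r)\geq r$ by squaring $\sum_{M\in U_r}\sqrt{M}$; the content is the same.
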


\noindent We first observe that this result, when combined with
Corollary \ref{descent3}, implies Theorem \ref{main3}. Indeed,
Theorem \ref{ii} shows that $L(A^{(R)}, 1) \neq 0$,  and so Rubin's
work \cite{R} shows that the $p$-part of the conjecture of Birch and
Swinnerton-Dyer holds for all odd primes $p$. Hence the full
conjecture of Birch and Swinnerton-Dyer will hold for $A^{(R)}$ if and
only if the $2$-part of this conjecture is true. But, as
$\Sha(A^{(R)})(2) =0$ by Corollary \ref{descent3} and the Tamagawa
factors of $A^{(R)}$ at the bad primes are given by $c_7 = 2,
c_{q_i} = 2 \, (1 \leq i \leq r)$, we see that the $2$-part of the
conjecture is just the assertion that $ord_2(L^{(alg)}(A^{(R)}, 1))
= r-1$, as required.

\medskip

Before beginning the proof of Theorem \ref{ii}, we note the
following basic elementary fact. Let $\mathfrak B$ be any elliptic curve
defined over $\BQ$ with complex multiplication by the field $F$, and
let $\phi$ denote the Grossencharacter of $\mathfrak B$. Let $q \geq 5$
be a prime number, where $\mathfrak B$ has good reduction, and which is
inert in $F$. Then we always have
\begin{equation}\label{ss}
\phi((q)) = - q
\end{equation}
where $(q)$ denotes the ideal $q\CO$. Indeed, $\mathfrak B$ has
supersingular reduction at such a prime $q$, and so $q$ must divide
$a_q$, where $a_q$ denotes the trace of Frobenius at $q$ for
$\mathfrak B$. But, by Hasse's theorem $|a_q| \leq 2\sqrt{q}$, and so
$a_q=0$ since $q \geq 5$. Hence the Euler factor at $q$ of the
complex $L$-series of $\mathfrak B$ over $\BQ$ must be equal to $(1 +
q^{1-2s})^{-1}$. But this complex $L$-series must coincide with the
complex Hecke $L$-function $L(\phi, s)$, whose Euler factor at $(q)$
is $(1- \phi((q))q^{-2s})^{-1}$, and so \eqref{ss} follows. Note
that \eqref{ss} immediately implies that, for such a prime $q$, we
have
\begin{equation}\label{ss1}
ord_2(1- \bar{\phi}((q))/q^2) = 1 \, \,  \textrm{whenever} \, \, q
\equiv 1 \ \mod 4.
\end{equation}

\medskip

We now give the proof of Theorem \ref{ii} by induction on $r$, the
assertion being true for $r=0$ because $L^{(alg)}(A, 1) = 1/2$.
Assume next that $r=1$. Applying \eqref{av} with $m=1$ and $\pi_1 =
q_1$, we obtain
\begin{equation}\label{ss2}
L^{(alg)}(A, 1) (1- \bar{\psi}((q_1))/q_1^2)) +
L^{(alg)}(A^{(q_1)},1)/\sqrt{q_1} = 2\Psi_1.
\end{equation}
Writing $v$ for some place of the field $\mathfrak J_1$ above $2$,
Proposition \ref{i} assures us that $\ord_v(\Psi_1) \geq 0$. On the
other hand, since $L^{(alg)}(A, 1) = 1/2$, we conclude from
\eqref{ss1} that
$$
\ord_v(L^{(alg)}(A, 1) (1- \bar{\psi}((q_1))/q_1^2))=0.
$$
Thus \eqref{ss2} immediately implies that $\ord_v(L^{(alg)}(A^{(q_1)},1))=0$, as required. Now assume that $r \geq 2$, and that Theorem \ref{ss} has been proven for all products of $<r$ such primes $q_i$. Applying \eqref{av} with $m=r$ and $\pi_1 = q_1, \ldots, \pi_r = q_r$, we conclude that
\begin{equation}\label{ss3}
L^{(alg)}(A, 1)\prod_{j=1}^{r}(1- \bar{\psi}((q_j))/q_j^2) + W_r +
L^{(alg)}(A^{(R)},1)/\sqrt{R} = 2^r\Psi_r,
\end{equation}
where, writing $U_r$ for the set of all positive divisors of $R$
distinct from $1$ and $R$, we have
\begin{equation}\label{ss4}
W_r = \sum_{M \in U_r}L_{S_r}(\bar{\psi}_M, 1)/\Omega_\infty.
\end{equation}
Now again it follows from \eqref{ss1} that
$$
ord_2(L^{(alg)}(A, 1)\prod_{j=1}^{r}(1- \bar{\psi}((q_j))/q_j^2)) =
r-1.
$$
Moreover, if $v$ denotes any place of $\mathfrak J_r$ above $2$,
Proposition \ref{i} tells us that $\ord_v(\Psi_r) \geq 0$. Hence
\eqref{ss4} will immediately imply the desired result \eqref{iii},
once we have established the following lemma.

\begin{lem}
For all $r \geq 2$, and all places $v$ of $\mathfrak J_r$ above $2$, we have $\ord_v(W_r) \geq r$.
\end{lem}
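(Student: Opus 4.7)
My plan is to reorganize the sum $W_r$ using an inclusion--exclusion argument over the Boolean lattice of subsets of $\{1,\ldots,r\}$, reducing everything to smaller instances of the identity \eqref{av}. For $M \in \mathfrak D_r$ write $a_M = L_{S_r}(\bar\psi_M, 1)/\Omega_\infty$, and for each subset $T \subseteq \{1,\ldots,r\}$ define the aggregate
$$
E_T = \sum_{M \mid \prod_{q \in T} q} a_M.
$$
The first key step is to prove
$$
E_T = 2^{|T|}\,\Psi_T \cdot \prod_{j \notin T}\bigl(1 - \bar\psi((q_j))/q_j^2\bigr),
$$
where $\Psi_T$ is the analogue of $\Psi_{|T|}$ formed from the primes in $T$ (with the convention $\Psi_\emptyset = L^{(alg)}(A,1) = 1/2$). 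This will follow from applying \eqref{av} to the subsequence of $\pi$'s indexed by $T$, combined with the factorization
$$
L_{S_r}(\bar\psi_M, 1) = L_{S_T}(\bar\psi_M, 1) \cdot \prod_{j \notin T}\bigl(1 - \bar\psi_M((q_j))/q_j^2\bigr),
$$
valid for all $M \in \mathfrak D_T$. The crucial simplification, used to pull the product outside of the sum, is that by \eqref{ss} we have $\bar\psi_M((q_j)) = \bar\psi((q_j)) = -q_j$ whenever $q_j \nmid M$, so the Euler factors at primes $q_j$ with $j \notin T$ are independent of $M$.

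For valuations, note that $\mathfrak J_r/\BQ$ is unramified at $2$ (since $-7$ and each $q_j$ are $\equiv 1 \bmod 4$), so $\ord_v$ restricts to $\ord_2$ on $\BQ$. By \eqref{ss1}, each factor $(1 - \bar\psi((q_j))/q_j^2) = (q_j+1)/q_j$ has $\ord_v = 1$. Combined with Proposition \ref{i}, for $T \neq \emptyset$ we get
$$
\ord_v(E_T) \geq (r - |T|) + |T| = r,
$$
while $E_\emptyset = \tfrac{1}{2}\prod_{j=1}^r (1 + 1/q_j)$ has $\ord_v = r - 1$ exactly. Now M\"obius inversion on the Boolean lattice inverts the relation $E_T = \sum_{S \subseteq T} a_S$ to $a_S = \sum_{T \subseteq S}(-1)^{|S|-|T|} E_T$; summing over $S$ with $1 \leq |S| \leq r-1$ and exchanging the order of summation yields
$$
W_r = \sum_{T \subseteq [r]} c_T\, E_T, \qquad c_T = \sum_{\substack{S \supseteq T \\ 1 \leq |S| \leq r-1}}(-1)^{|S|-|T|}.
$$
A short binomial computation gives $c_T = (-1)^{r-|T|+1}$ for $T \neq \emptyset$, and $c_\emptyset = -1 - (-1)^r$, which equals $-2$ when $r$ is even and $0$ when $r$ is odd. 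Either way, every contribution $c_T E_T$ has $\ord_v \geq r$: for $T \neq \emptyset$ this is immediate from the previous bound; for $T = \emptyset$ the coefficient either vanishes or supplies the extra factor of $2$ needed to lift $\ord_v(E_\emptyset) = r - 1$ up to $r$. Summing, $\ord_v(W_r) \geq r$.

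The main obstacle will be a clean derivation of the identity for $E_T$, which hinges on the compatibility of imprimitive $L$-functions at different ``levels'' $T \subseteq [r]$, together with the confirmation that the supersingular Euler factor formula from \eqref{ss} applies uniformly to all twists $A^{(M)}$ at primes $q_j \nmid M$ inert in $F$. The even/odd parity in $c_\emptyset$ is mildly delicate but ultimately innocuous: in both parities, $c_\emptyset \cdot E_\emptyset$ contributes at valuation at least $r$, either trivially (odd $r$) or because the $2$ in $c_\emptyset$ lifts the valuation by exactly the missing amount (even $r$).
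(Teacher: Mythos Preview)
Your argument is correct, and it takes a genuinely different route from the paper. The paper's proof of this lemma sits inside the induction for Theorem~\ref{ii}: it invokes the inductive hypothesis to pin down $\ord_v(\Lambda_M)=r-1$ exactly for each $M\in U_r$, then writes $\Lambda_M = 2^{r-1}\sqrt{M} + 2^r\alpha_M$ and reduces to a parity statement, namely $\ord_v\bigl(\sum_{M\in U_r}\sqrt{M}\bigr)\geq 1$, which is handled by squaring and counting. Your proof, by contrast, never uses the inductive hypothesis at all: it applies the averaging identity \eqref{av} and Proposition~\ref{i} at every sublevel $T\subseteq[r]$ simultaneously, then reassembles $W_r$ from the aggregates $E_T$ by M\"obius inversion. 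The key observation that makes this work---that the supersingular Euler factor $(1-\bar\psi_M((q_j))/q_j^2)$ equals $(q_j+1)/q_j$ independently of $M$ whenever $q_j\nmid M$---is exactly right, and lets you pull the product outside the sum defining $E_T$.

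One small point: your formula $c_T=(-1)^{r-|T|+1}$ is valid only for $\emptyset\neq T\subsetneq[r]$; for $T=[r]$ the defining sum is empty and $c_{[r]}=0$. This does not damage the argument (either because $T=[r]$ never actually arises once you note that every $S$ in the M\"obius sum has $|S|\leq r-1$, or because $\ord_v(E_{[r]})\geq r$ regardless), but it is worth stating cleanly. What your approach buys is a self-contained proof of the lemma that does not rely on the surrounding induction; what the paper's approach buys is that it needs only the single top-level instance of \eqref{av}, at the cost of feeding in the inductive hypothesis.
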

\begin{proof} For $M \in U_r$, put $\Lambda_M = L_{S_r}(\bar{\psi}_M, 1)/\Omega_\infty$, so that
$$
\Lambda_M = L(\bar{\psi}_M, 1)/\Omega_\infty \times
\prod_{q|\frac{R}{M}}(1- \bar{\psi}_M((q))/q^2).
$$
Now $M$ has strictly less than $r$ prime factors, and hence we
conclude from our inductive hypothesis and \eqref{ss1} that
$\ord_v(\Lambda_M) = r-1$. But, since $\psi_M$ is the
Grossencharacter of an elliptic curve defined over $\BQ$, we know
that $\sqrt{M}\Lambda_M \in \BQ$. It follows easily from these last
two assertions that we can write
\begin{equation}\label{ss5}
\Lambda_M = 2^{r-1}\sqrt{M} + 2^r\alpha_M,
\end{equation}
where $\alpha_M$ is some element of $\mathfrak J_r$ with
$\ord_v(\alpha_M) \geq 0$. Thus, in order to show that $W_r = \sum_{M
\in U_r}\Lambda_M$ satisfies $\ord_v(W_r) \geq r$, it suffices to
prove that
\begin{equation}\label{ss6}
\ord_v(\sum_{M \in U_r}\sqrt{M}) \geq 1.
\end{equation}
But clearly
\begin{equation}\label{ss7}
(\sum_{M \in U_r}\sqrt{M})^2 = \sum_{M \in U_r}M  + 2\gamma_r,
\end{equation}
where $\ord_v(\gamma_r) \geq 0$, and
\begin{equation}\label{ss8}
\sum_{M \in U_r}\Lambda_M \equiv (\sum_{M \in U_r} 1) \ \mod 2 \equiv 0 \ \mod 2,
\end{equation}
because $U_r$ has cardinality equal to $2^r - 2$. Recalling that $2$
is unramified in $\mathfrak J_r$, the inequality \eqref{ss6} now follows
immediately from \eqref{ss7} and \eqref{ss8}. This completes the
proof of the lemma, and so also the proof of Theorem \ref{ii}.
\end{proof}

 \medskip

We now turn to the proof of the analytic part of Theorem
\ref{main4}. As in Theorem \ref{main3}, let $q_1, \ldots, q_r$ be $r
\geq 0$ distinct prime numbers which are $\equiv 1 \mod 4$ and
which are inert in $F$, and put $R=q_1\cdots q_r$. Let
$p_1, \ldots, p_k$ be $k \geq 1$ prime numbers, which split
completely in the field
$$
\mathfrak H = \BQ(A[4], \sqrt{q_1}, \ldots \sqrt{q_r}).
$$
In particular, these latter primes split in $F$, and we write $\mathfrak
p_1, \ldots, \mathfrak p_{2k}$ for the set of primes of $F$ lying above
them in some order. Put $\rho_j = \psi(\mathfrak p_j)$, and note that
$\rho _j \equiv 1 \mod 4$ by the theory of complex
multiplication, since $\mathfrak p_j$ splits completely in $\mathfrak H$.
For each integer $n$ with $1 \leq n \leq 2k$, define $\mathfrak N_n =
\rho_1\cdots\rho_n$. We now prove by induction on both $r\geq 0$ and
$n \geq 1$ that
\begin{equation}\label{ix}
\ord_v(L(\bar{\psi}_{R \mathfrak N_n}, 1)/\Omega_\infty) \geq r+n.
\end{equation}
Taking $n =2k$, we immediately obtain the statement of Theorem
\ref{main4} as a special case of \eqref{ix}. We consider all
divisors $M$ of $R\mathfrak N_n$ which are given by the product of all
the elements of an arbitrary subset of $S_{r,n}= \{q_1,..., q_r,
\rho_1,..., \rho_n\}.$ We write $L_{S_{r,n}}(\bar{\psi}_M, s)$ for
the $L$-function of the complex conjugate of the Grossencharacter
$\psi_M$, but with the Euler factors for the primes in the set
$S_{r,n}$ omitted from its Euler product. Let $\mathfrak J_{r,n} =
F(\sqrt{q_1}, \ldots, \sqrt{q_r}, \sqrt{\rho_1}, \ldots,
\sqrt{\rho_n})$. Then the equation \eqref{av} gives in the present
situation
\begin{equation}\label{x}
\sum_{M} L_{S_{r,n}}(\bar{\psi}_M, 1)/{\Omega_{\infty}}  =
2^{r+n}\Psi_{r,n},
\end{equation}
where $\Psi_{r,n}$ is the trace from $F(A_{g_{r,n}})$ to $\mathfrak
J_{r,n}$ of $\mathcal {E}_1^*(\Omega_\infty/g_{r,n}, \mathfrak L)$, with
$g_{r,n} = R\mathfrak N_n\sqrt{-7}$. By Proposition \ref{i} and the fact
that $n \geq 1$, we have $\ord_v(\Psi_{r,n}) \geq 0$ for all places
$v$ of $\mathfrak J_{r,n}$ above $2$. In order to analyse the order at
$v$ of the terms on the left hand side of \eqref{x}, we make the
following observation on Euler factors. Suppose first that $M$
divides $R$. Then $\psi_M(\mathfrak p_j) \equiv 1 \mod 4$ for $1
\leq j \leq n$ because $\mathfrak p_j$ splits completely in the field
$F(E^{(M)}[4])$, which is clearly a subfield of $\mathfrak H$. Hence the
Euler factor
\begin{equation}\label{xi}
1- \bar{\psi}_M(\mathfrak p_j)/p_j = (\psi_M(\mathfrak p_j) -
1)/\psi_M(\mathfrak p_j)
\end{equation}
is always divisible by $4$. It is easy to see that all other Euler
factors which occur in the imprimitive $L$-functions on the left
hand side of \eqref{x} will be divisible at least by 2. Consider now
any one of the $M \neq R\mathfrak N_{n}$ occurring in the sum on the
left hand side of \eqref{x}, and let $r(M)$ be the number of factors
of $M$ lying in the set $\{q_1,..., q_r\}$, and let $n(M)$ be the
number of its factors lying in the set $\{\rho_1,..., \rho_n\}$. If
$n(M) = 0$, then Theorem \ref{ii}, and the fact the Euler factors
given in $\eqref{xi}$ are divisible by 4, imply that
\begin{equation}\label{x2}
\ord_v(L_{S_{r,n}}(\bar{\psi}_M, 1)/{\Omega_{\infty}}) \geq r(M)-1 +
2n  + (r-r(M)) \geq n+r.
\end{equation}
On the other hand, if $n(M) > 0$, we have
\begin{equation}\label{x3}
\ord_v(L_{S_{r,n}}(\bar{\psi}_M, 1)/{\Omega_{\infty}}) \geq
\ord_v(L(\bar{\psi}_M, 1)/\Omega_\infty) + (n-n(M)) + (r-r(M)).
\end{equation}
One first proves \eqref{ix} for $r=0$ and all $n \geq 1$ by
induction on $n$. To do this we use \eqref{x} with $r=0$, noting
that the induction starts, because when $n=1$ there are just two
terms on the left hand side of \eqref{x}, and the term with $M=1$ is
handled by \eqref{x2}. When $n > 1$, one has to use \eqref{x2} for
the term with $M=1$, and \eqref{x3} plus the inductive hypothesis to
handle the terms with $M \neq 1, \mathfrak R_n$. We now assume that $r >
0$, and proceeds to prove by induction on $n$ that \eqref{ix} holds
for all $n \geq 1$, using now \eqref{x} for the given $r$. One sees
easily again that the induction starts with \eqref{ix} being valid
for $n=1$. When $n > 1$, one uses \eqref{x2} to handle the terms on
the left hand side of \eqref{x} for which $M$ divides $R$. For the
terms on the left hand side of \eqref{x} with $M$ not dividing $R$,
but $M \neq R\mathfrak R_n$, we can use \eqref{x3} together with the
inductive hypothesis that
$$
\ord_v(L(\bar{\psi}_M, 1)/\Omega_\infty) \geq r(M) + n(M).
$$
In this way, one sees that all terms on the left hand side of
\eqref{x}, except the term with $M = R{\mathfrak R}_n$, have
$v$-order at least $r+n$. But the right hand side of \eqref{x}  also
has $v$-order at least $r+n$ because $\ord_v(\Psi_{r,n}) \geq 0$.
Hence the remaining term on the left had side of \eqref{x} must also
have $v$-order at least $r+n$, completing the inductive proof of
\eqref{ix}, and so also the proof of  the lower bound of Theorem
\ref{main4}. If we assume in addition that $L(A^{(M)}, 1) \neq 0$.
Then, by Kolyvagin's theorem, $A^{(M)}(\BQ)$ is finite, and so we
conclude from Corollary \ref{descent6} that $\Sha(A^{(M)})(2) \neq
0$ because $N_+ > 1$, completing the proof of Theorem \ref{main4}.

\medskip

We end this section by pointing out that, if we assume that
$L(A^{(M)}, 1) \neq 0$, the unproven $2$-part of the conjecture of
Birch and Swinnerton-Dyer has an interesting consequence for the
quadratic twists of the curve $A = X_0(49)$, which appear in Theorem
\ref{main4}. Indeed, by Corollary \ref{descent6}, we have
$\Sha(A^{(M)})(2) \neq 0$ for these twists.  On the other hand,
using Proposition \ref{descent8}, we see that the $2$-part of the
conjecture of Birch and Swinnerton-Dyer predicts
that
\begin{equation}\label{2sd}
ord_2(L^{(alg)}(A^{(M)}, 1)) = 2k + r -1 +
ord_2(\#(\Sha(A^{(M)})(2))).
\end{equation}
In addition, the Cassels-Tate pairing tells us that
$ord_2(\#(\Sha(A^{(M)})(2)))$ must be an even integer. Thus, in view
of Theorem \ref{main4}, we see that not only does this conjecture
predict that $\Sha(A^{(M)})(2) \neq 0$ when $k \geq 1$, as we have
proven, but it also predicts the stronger lower bound
$$
ord_2(L^{(alg)}(A^{(M)}, 1)) \geq r + 2k + 1.
$$
We remark that it does not seem easy to prove this sharper lower
bound by Zhao's method. However, we shall see in the next section that one can derive this sharper lower bound by using Waldspurger's formula.

\section{Method using Waldspurger's formula}

In this section, we shall first establish an explicit Waldspurger formula for the family of quadratic twists of the curve $A=(X_{0}(49),[\infty])$, and then use it to prove some results on $L$-values in this family, including all of those established in the last section by Zhao's method. We begin by using Gross-Prasad theory to get an appropriate test vector for our formula.

\medskip

If $W$ is any abelian group, $\hat{W}$ will denote the tensor product over $\BZ$ of $W$ with $\hat{\BZ} = \prod_p \BZ_p$. Now consider a definite quaternion algebra $B$ over $\BQ$, and an open subgroup $U$ of $\wh{B}^\times$. Let $X$ denote the finite set $B^\times \bs \wh{B}^\times/U$, and write $g_1, \ldots, g_n$ for a set of representatives of $X$ and $[g_1], \ldots, [g_n]$ for their classes in $X$. Denote by $\BZ[X]$ the free $\BZ$-module of formal sums $\sum_{i=1}^n a_i [g_i]$ with $a_i\in \BZ$, and $\BZ[X]^0$ its degree 0 sub-module (here the degree of $\sum a_i[g]_i$ is $\sum a_i$). Define $w_i:=\# ((B^\times \cap g_iU g_i^{-1})/\pm 1)$ and let $\pair{\ , \ }$ be the $\BZ$-bilinear pairing on $\BZ[X]$ defined by $\pair{[g_i], [g_j]}=\delta_{ij} w_i$. Let $\pi$ be an automorphic representation of $B_\BA^\times$ whose Jacquet-Langlands correspondence for $\GL_2(\BA)$ is associated with an elliptic curve over $\BQ$. There is a natural embedding
$$\pi^U \ra \BC[X]^0=\BZ[X]^0\otimes_\BZ \BC, \qquad f \mapsto \sum f([g_i]) w_i^{-1}[g_i].$$

Now take  $B$ to be the quaternion algebra over $\BQ$ ramified exactly at
$\infty$ and $7$, i.e.
$$
B=\BQ+\BQ i +\BQ j+\BQ k, \ \ \ i^2=-1, \ j^2=-7, \ ij=-ji=k.
$$
Let $\pi = \otimes_v\pi_v$ be the automorphic representation of $B^\times_\BA$ corresponding to $A=(X_0(49),[\infty])$ via modularity of $A$ and the Jacquect-Langlands correspondence.   This automorphic representation is naturally realized as a subspace of  the space of the infinitely differentiable complex-valued functions $C^{\infty}(B^\times\backslash \widehat{B}^{\times}/\widehat{\BQ}^{\times})$. Let $\CO_B$ denote the maximal order $\BZ[1, i, (i+j)/2, (1+k)/2]$
of $B$, and note that $\CO_B^\times=\{\pm 1, \pm i\}\cong \mu_4$. The local representations $\pi_v$ have the following properties:
\begin{enumerate}
  \item $\pi_\infty$ is trivial;
  \item $\pi_p$ is spherical if $p\neq\infty,7$, i.e. $\pi^{\CO^{\times}_{B_p}}$ is dimension one;
  \item $\pi_7$ has conductor with exponent $1$, i.e. for a uniformizer $j$ at $7$,
  $\pi_{7}^{1+j\CO_{B_7}} \neq0$ but $\pi_{7}^{\CO^\times_{B_7}}=0$.
\end{enumerate}
Let $U=\prod_p U_p$ be the open compact subgroup of $\wh{B}^\times$
defined by $U_p = \CO_{B_p}^\times$ if $p\neq 7$, and $U_p = 1+j \CO_{B_7}$ if $p=7$.
Then $\pi^U\neq 0$ is a representation of $B^{\times}_7.$  The next theorem gives a description of the space $\pi^U$, which will be important for computing the Gross-Prasad test vector.  Let $\chi_0$ be the character of $B_\BA^\times$ attached to the
quadratic extension $\BQ(\sqrt{-7})$, i.e. the composition of the
following morphisms:
\begin{equation}\label{tc}
\wh{B}^\times \stackrel{\det}{\lra}
\wh{\BQ}^\times=\BQ^\times_+\times \wh{\BZ}^\times \lra
\wh{\BZ}^\times \lra (\BZ/7\BZ)^\times \lra
\BF_7^\times/\BF_7^{\times 2} \stackrel{\sim}{\lra} \pm 1.
\end{equation}
Since $A$ has CM by $\BZ[\sqrt{-7}]$, we have $\pi\cong \pi\otimes \chi_0$. 
\begin{thm}\label{automorphic}The vector space $\pi^U$ is a two-dimensional irreducible
representation of $B_7^\times$, which has orthogonal basis $f_0,
f_1$ defined as follows.
There is a natural bijection
$$
\Lambda:=(\CO^\times_B/\pm 1)\bs (\CO_{B_7}^\times/\BZ_7^\times U_7) \lra B^\times\bs
\wh{B}^\times/\wh{\BQ}^\times U
$$
induced by the embedding
$\CO_{B_7}^\times \subset B_7^\times\ra \wh{B}^\times$, and
$\CO_{B_7}^\times/\BZ_7^\times U_7$ is a cyclic group of order $8$
so that $\Lambda\cong \BZ/4\BZ$. Via the above bijection, the form
$f_0$ (resp. $f_1$) is supported on the elements of $\Lambda$ of
order dividing $2$ (resp. of exact order $4$), valued in $0, \pm 1$, and satisfies $\sum_{\lambda\in \Lambda} f_i(\lambda)=0$  $(i=0,1)$. Such $f_i$'s
are unique up to multiplication by $\pm 1$.  Moreover,  $\chi_{0}f_{0}=f_{0}$ and $\chi_{0}f_{1}=-f_{1}.$

\end{thm}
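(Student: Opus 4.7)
The plan is to pin down $\pi_7$ as a specific $2$-dimensional representation of the division algebra $B_7^\times$, identify the double-coset space $B^\times \bs \wh{B}^\times / \wh{\BQ}^\times U$ with $\Lambda$ using that $\CO_B$ has class number one, and then exploit the self-twist $\pi \cong \pi \otimes \chi_0$ coming from CM to split $\pi^U$ into the two advertised lines. The main obstacle is the local identification of $\pi_7$ and the verification that $U_7$ acts trivially on it; once that is in place, the rest reduces to transparent arithmetic in $\BF_{49}$.

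First I would treat $\pi_7$. Because $A$ has CM by $\BZ[(1+\sqrt{-7})/2]$ and $7$ ramifies in $F$, the $\GL_2(\BQ_7)$-component at $7$ is a depth-zero supercuspidal of conductor $7^2$, and its Jacquet--Langlands transfer has the form $\pi_7 = \Ind_{\BQ_7^\times \CO_{B_7}^\times}^{B_7^\times} \tilde\sigma$, where $\tilde\sigma$ is trivial on $\BZ_7^\times(1 + j\CO_{B_7})$ and restricts to a character of $\BF_{49}^\times = \CO_{B_7}^\times/(1+j\CO_{B_7})$ not fixed by Frobenius. Since $j\CO_{B_7}$ is a two-sided ideal, $U_7 = 1 + j\CO_{B_7}$ is normal in $B_7^\times$; together with the triviality of $\tilde\sigma$ on $U_7$, this forces the induced representation to be trivial on $U_7$, so $\pi_7^{U_7} = \pi_7$ is $2$-dimensional and irreducible as a $B_7^\times$-representation. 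Combined with $\pi_p^{U_p}$ being $1$-dimensional spherical for $p \neq 7$, we get $\dim_\BC \pi^U = 2$.

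Next I would identify $\Lambda$. The quaternion algebra $B$ has class number one, hence $\wh{B}^\times = B^\times \wh{\CO}_B^\times$, reducing $B^\times \bs \wh{B}^\times / \wh{\BQ}^\times U$ to $\CO_B^\times \bs \wh{\CO}_B^\times / \wh{\BZ}^\times U$. Only the $7$-component contributes, giving $\CO_{B_7}^\times / \BZ_7^\times(1 + j\CO_{B_7}) \cong \BF_{49}^\times/\BF_7^\times$, cyclic of order $8$. The group $\CO_B^\times/\{\pm 1\}$ is generated by $i$, whose reduction modulo $j$ satisfies $\bar i^2 = -1 \in \BF_7^\times$; since $-1 \notin \BF_7^{\times 2}$ (as $7 \equiv 3 \pmod 4$), $\bar i$ has order exactly $2$ in the cyclic group $\BF_{49}^\times/\BF_7^\times$. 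Quotienting yields $\Lambda \cong \BZ/4\BZ$.

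Finally, I would place $\pi^U$ inside $\BC^\Lambda$ and read off $f_0, f_1$. A direct check using $N(i - 1) = 2 \in \BZ_7^\times$ shows $i \notin U_7$, whence $B^\times \cap g_\lambda U g_\lambda^{-1} = \{\pm 1\}$ and $w_\lambda = 1$ for every $\lambda$, so the degree-zero condition becomes $\sum_\lambda f(\lambda) = 0$. The $4$-dimensional space $\BC^\Lambda$ decomposes as a direct sum of Hecke-isotypic automorphic representations; the only characters of $B^\times \bs B^\times_\BA$ trivial on $U$ and at $\infty$ are the trivial character and $\chi_0$, each contributing one dimension, so the remaining $2$-dimensional piece must be $\pi^U$. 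Thus $\pi^U = \{f \in \BC^\Lambda : \sum_\lambda f(\lambda) = 0 \text{ and } \sum_\lambda \chi_0(\lambda) f(\lambda) = 0\}$. Computing $\chi_0$ on $\Lambda$ via the reduced norm $\CO_{B_7}^\times \to \BZ_7^\times \to \BF_7^\times/\BF_7^{\times 2}$, its kernel is the unique index-$2$ subgroup of the cyclic group of order $8$; descending to $\Lambda = \BZ/4\BZ$, $\chi_0 = +1$ on $\{0, 2\}$ (elements of order dividing $2$) and $\chi_0 = -1$ on $\{1, 3\}$ (elements of exact order $4$). Because $\pi \cong \pi \otimes \chi_0$ (from CM) and $\chi_0$ is trivial on $U$, pointwise multiplication by $\chi_0$ is an involution on $\pi^U$, whose $\pm 1$-eigenspaces inside $\pi^U$ are precisely the functions supported on $\{0, 2\}$ with $f(0) = -f(2)$ and on $\{1, 3\}$ with $f(1) = -f(3)$. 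Normalizing to values in $\{0, \pm 1\}$ pins down $f_0, f_1$ uniquely up to sign, and the relations $\chi_0 f_0 = f_0$, $\chi_0 f_1 = -f_1$, along with orthogonality (disjoint supports), are then immediate.
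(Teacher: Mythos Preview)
Your proposal is correct and follows essentially the same route as the paper: both use class number one for $\CO_B$ to collapse the adelic double coset to the $7$-adic group $\CO_{B_7}^\times/\BZ_7^\times U_7 \cong \BF_{49}^\times/\BF_7^\times$, identify $\Lambda \cong \BZ/4\BZ$ via the order-$2$ image of $i$, realize $\pi^U$ as the orthogonal complement of the characters $\chi\circ\mathrm{N}$ inside $\BC^\Lambda$, and compute $\chi_0$ through the norm to split $\pi^U$ into the $f_0,f_1$ lines. The one substantive difference is how $\dim\pi^U=2$ is justified: the paper simply invokes Tunnell's theorem, whereas you give an explicit depth-zero Jacquet--Langlands description of $\pi_7$ (as induced from a regular character of $\BF_{49}^\times/\BF_7^\times$, with $U_7$ normal and in the kernel) together with the global count that only two one-dimensional constituents occur. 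Your argument is more self-contained and makes the $B_7^\times$-irreducibility transparent; the paper's citation is shorter but hides exactly the structure you unpack.
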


\begin{proof}
Note that the class number of $\CO_B$ is one (see,
\cite{Vigneras}) and therefore $\wh{B}^\times=B^\times
\wh{\CO}_B^\times$. Let $U^{(7)} = \prod_{v \neq 7}U_v$.  It is easy to see that the embedding
$B_7^\times\ra \wh{B}^\times$ induces a bijective map:
$$
H\bs B_7^\times/\BQ_7^\times U_7 \lra B^\times \bs \wh{B}^\times/\wh{\BQ}^\times U,
$$
where $H=B^\times \cap B_7^\times
U^{(7)}\subset B_7^\times$ is a semi-product of $\CO_B^\times$ with
$j^\BZ=B_7^\times/\CO_{B_7}^\times$. Let $\mathfrak O$ denote the ring
$\BZ_7[i]\subset \CO_{B_7}$, then $\CO_{B_7}=\mathfrak O+j\mathfrak O$ and one can
see that $\CO_{B_7}^\times/\BZ_7^\times U_7 \cong
\mathfrak O^\times/\BZ_7^\times\cong \BF_{7^2}^\times/\BF_7^\times$ is a
cyclic group of order $8$.
Note also that via the above identification, for any two automorphic forms $f, f'$,  the pairing $ \pair{f, f'}=\sum_{\lambda\in\Lambda} f(\lambda){f'(\lambda)}$, since the $w_i$'s in the definition of $\pair{\ ,\ }$ are all equal to one. Moreover, forms
$\chi\circ \det$ become $\chi\circ
\RN_{\BF^\times_{7^2}/\BF_7^\times}$. Note also that $\pi^U$ is
orthogonal to all $\chi\circ \RN$, and $\pi^U$ has dimension $2$ by Theorem 3.6 in \cite{Tunnell}, and it is easy to see that the forms $f_0, f_1$ give an orthogonal basis of $\pi^U$.
It is clear that $\chi_0$ is invariant under the left action of
$B^\times \cdot H$,  and under the right action of $U$. Observe further that
$\BZ_7[i]^\times \lra \CO_{B_7}^\times/\BZ_7^\times U_7$ is
surjective. Thus the induced map of $\chi_0$ on $\BZ_7[i]^\times$ factors
through $\BZ_7[i]^\times\lra \BF_{7^2}^\times \stackrel{\RN}{\lra}
\BF_7^\times/\BF_7^{\times 2}$, whence $\chi_0$ takes value $1$ on the
support of $f_0$ and $-1$ on the support of $f_1$.

\end{proof}

Now Let $K$ be an imaginary quadratic field in which $7$ is ramified, and let $K_7$ be its completion at the unique prime above $7$. It is easily seen that $K_7$ is isomorphic to either
$\BQ_7(\sqrt{-7})$ or $\BQ_7(\sqrt{-35})$.  We denote by $-D$ the discriminant of $K$. Let $d$ be any positive fundamental discriminant dividing $D$, and let
$\chi$ be the quadratic character of $K$ corresponding to the unramified extension $K(\sqrt{d})/K$. We write $L(A/K, \chi, s)$ for the complex $L$-function of $A/K$ twisted by $\chi$. Thus we have, by the induction property of $L$-series,
$$
L(A/K, \chi, s) = L(A^{(d)},s)L(A^{(-D/d)}, s).
$$
Since $7$ divides $D$, and $A^{(m)}$ is isogenous to $A^{(-7m)}$ for any integer $m$ prime to $7$, we conclude that $L(A/K, \chi, s)$ has global root number $+1$.

\medskip

Fix an embedding of $K$ into $B$  such that $\CO_K$ is embedded into $\CO_B$.  Via this embedding we view $K^\times$ as a sub-torus of $B^\times$ and $K^{\times}_7$ as a sub-torus of $B_{7}^{\times}$.  Recall $\chi$ as before, let $\chi_7$ be the $7$-component of $\chi$,  define  $\pi^{U, \chi_{7}}$ to be the  vector space
$$\{f\in\pi^{U} | \pi(t)f=\chi_{7}(t)f, \ \forall t\in K^{\times}_{7} \}.$$
It is then known, by Gross-Prasad theory of test vectors,  that $\pi^{U, \chi_7}$ has dimension $1$.

\begin{defn}
Any nonzero vector in the vector space $\pi^{U, \chi_{7}}$ is called a Gross-Prasad test vector for $(\pi,\chi)$ or $(A, \chi)$. A Gross-Prasad test vector for $(A,\chi)$ is called a primitive Gross-Prasad test vector  if its values are integers which generate $\BZ.$
\end{defn}

\noindent We note that a primitive Gross-Prasad test vector is unique up to multiplication by $\pm1$.

\begin{thm}\label{test vector}
Let $K$ be an imaginary quadratic field, in which $7$ is ramified,
and let $\chi$  be an unramified quadratic character of $K$ such that $L(A, \chi, s)$ has global root number equal to $+1$. Let $f$ be a primitive test vector for $(A, \chi)$, then
$\pair{f, f}= 2$ if $K_7=\BQ_7(\sqrt{-7})$, and $\pair{f, f}= 4$ if $K_7=\BQ_7(\sqrt{-35})$.
\end{thm}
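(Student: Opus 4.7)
The plan is to make the $K_7^\times$-action on the two-dimensional space $\pi^U$ completely explicit in the orthogonal basis $f_0, f_1$ from Theorem \ref{automorphic}, identify the $\chi_7$-eigenvector, and read off $\pair{f,f}$. My first step is to verify that $U_7 = 1+j\CO_{B_7}$ is normal in $B_7^\times$: because $\CO_{B_7}$ is the unique maximal order of the division algebra $B_7$, any $g\in B_7^\times$ preserves it and hence its unique maximal two-sided ideal $j\CO_{B_7}$. Therefore $B_7^\times$ acts on $\pi^U$, factoring through the finite quotient $G := B_7^\times/\BQ_7^\times U_7$, a dihedral group of order $16$. Using the model $\pi^U\hookrightarrow\BC[\Lambda]$ of Theorem \ref{automorphic} with all $w_i=1$, one records $\pair{f_0,f_0}=\pair{f_1,f_1}=2$ and $\pair{f_0,f_1}=0$.

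Next I would compute how two natural generators of $G$ act on $(f_0,f_1)$. Right translation by $j$ corresponds on representatives to $[g]\mapsto[j^{-1}gj]$, which on $\CO_{B_7}^\times/\BZ_7^\times U_7\cong \BF_{49}^\times/\BF_7^\times\cong\BZ/8\BZ$ is the Frobenius $x\mapsto x^7$, i.e.\ inversion; this descends to inversion on $\Lambda\cong\BZ/4\BZ$. Since $f_0$ is supported on the two order-$\le 2$ elements of $\Lambda$ and $f_1$ is supported on the two order-$4$ elements (which inversion swaps), one reads off $\pi(j)f_0=f_0$ and $\pi(j)f_1=-f_1$. A lift $\gamma\in\CO_{B_7}^\times$ of a generator of $\Lambda$ acts by translation, and a direct computation (with signs chosen so that $f_0(0)=f_1(1)=1$) gives $\pi(\gamma)f_0=-f_1$, $\pi(\gamma)f_1=f_0$. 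Thus in the basis $(f_0,f_1)$ we have $\pi(j)=\bigl(\begin{smallmatrix}1&0\\0&-1\end{smallmatrix}\bigr)$ and $\pi(\gamma)=\bigl(\begin{smallmatrix}0&1\\-1&0\end{smallmatrix}\bigr)$.

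For Case A, $K_7=\BQ_7(\sqrt{-7})$: taking the embedding $\sqrt{-7}\mapsto j$, a direct check shows $\CO_{K_7}^\times\subset\BQ_7^\times U_7$, so the image of $K_7^\times$ in $G$ is $\{1,j\}$. The test vector is the $\chi_7(j)$-eigenvector of $\pi(j)$, namely $f_0$ or $f_1$; both are primitive (their values lie in $\{0,\pm 1\}$ and generate $\BZ$) and $\pair{f,f}=2$ in either case. For Case B, $K_7=\BQ_7(\sqrt{-35})$: since $5=N(1+2i)$ in $\mathfrak{O}=\BZ_7[i]$, I would embed $\sqrt{-35}\mapsto \beta j$ with $\beta=1+2i$, so $(\beta j)^2=N(\beta)j^2=-35$. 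The crucial computation is to locate $\beta$ in $\Lambda$: taking $g=2+i$ as a generator of $\BF_{49}^\times$ (of order $48$), one verifies $\beta=g^{19}$, hence $\beta\equiv g^3\pmod{\BF_7^\times}$, so the image of $\beta$ in $\Lambda\cong\BZ/4\BZ$ is $3$ and $\pi(\beta)=\pi(\gamma)^{-1}$. Multiplying,
\[
\pi(\beta j)=\pi(\gamma)^{-1}\pi(j)=\begin{pmatrix}0&-1\\1&0\end{pmatrix}\begin{pmatrix}1&0\\0&-1\end{pmatrix}=\begin{pmatrix}0&1\\1&0\end{pmatrix},
\]
whose eigenvectors are the primitive vectors $f_0\pm f_1$ (eigenvalues $\pm 1$); the test vector is one of these (depending on $\chi_7(\beta j)=\pm 1$), and $\pair{f_0\pm f_1,f_0\pm f_1}=\pair{f_0,f_0}+\pair{f_1,f_1}=4$.

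The main technical obstacle is Case B: the discrete-log identification of $\beta$ in $\BF_{49}^\times/\BF_7^\times$, together with independence of the choice of $\beta$ satisfying $N(\beta)=5$. Any two such $\beta$ differ by a norm-$1$ element of $\mathfrak{O}^\times$, whose image in $\CO_{B_7}^\times/\BZ_7^\times U_7\cong\BZ/8\BZ$ lies in the index-$2$ subgroup of even elements; modulo $\langle 4\rangle$ this collapses to $\{0,2\}\subset\Lambda\cong\BZ/4\BZ$, so the image of $\beta$ in $\Lambda$ is always odd, $\pi(\beta j)$ always has the same pair of eigenvectors $f_0\pm f_1$, and the value $\pair{f,f}=4$ is independent of the choice of embedding.
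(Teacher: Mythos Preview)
Your proof is correct and follows essentially the same approach as the paper: both identify the primitive test vector (up to right translation by an element of $B_7^\times$) as one of $f_0$, $f_1$, $f_0\pm f_1$ depending on $K_7$ and $\chi_7(\varpi)$, and then read off $\pair{f,f}$ from the explicit description in Theorem~\ref{automorphic}. You supply more computational detail (normality of $U_7$, the explicit matrices for $\pi(j)$ and $\pi(\gamma)$, and the discrete log of $1+2i$ in $\BF_{49}^\times$), whereas the paper simply states the answer and handles the embedding dependence by conjugating any $\iota_7$ to a fixed local model $\iota_7'\colon\sqrt{-35}\mapsto j+2k$, which is exactly your $\beta j$.
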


\begin{proof}
Let $\iota'_7$ be the local embedding $K_7\ra B_7$, which is given respectively by$\sqrt{-7}\mapsto j$, or $\sqrt{-35}\mapsto j+2k$.
Fix an embedding $\iota: K\ra B$ such that $\iota(\CO_K)\subset \CO_B$, where $\CO_K$ denotes the ring of integers of $K$. Then the local component $\iota_7$ of $\iota$ at $7$ is conjugate to $\iota'_7$, say by an element $g\in B_7^\times$, $\iota_7'=g^{-1}\iota_7 g$. By Theorem \ref{automorphic}, it is easy to check that $f: x\mapsto f'(xg)$ is a primitive test vector for $(A, \chi)$, where
$$f'=\begin{cases} f_0,  \qquad &\text{if $K_7\cong\BQ_7(\sqrt{-7})$,  $\chi_7(\varpi)=1$}, \\
f_1, \qquad  &\text{if $K_7\cong\BQ_7(\sqrt{-7})$, $\chi_7(\varpi)=-1$},\\
f_0-f_1, \qquad &\text{if $K_7\cong\BQ_7(\sqrt{-35})$, $\chi_7(\varpi)=1$},\\
f_0+f_1, \qquad &\text{if $K_7\cong\BQ_7(\sqrt{-35})$, $\chi_7(\varpi)=-1$},
\end{cases}$$ and  $\varpi$ is a uniformizer of $K_7$.
The assertion about  $\pair{f, f}=\pair{f', f'}$ now follows from Theorem \ref{automorphic}.
\end{proof}

\begin{thm}[Explicit Waldspurger Formula] \label{waldspurger}
Let $A=(X_{0}(49),[\infty])$,  and let $K$  an imaginary
quadratic field with discriminant $-D$, in which $7$ is ramified. Let $d$ be any positive fundamental discriminant dividing $D$, and let
$\chi$ be the quadratic character of $K$ corresponding to the unramified extension $K(\sqrt{d})/K$.
Let $f$ be the Gross-Prasad test vector for $(A,\chi)$
as in Theorem \ref{test vector}. Then we have
$$
\left|\sum_{t\in \widehat{K}^{\times}/K^{\times}\widehat{\CO}^{\times}_{B}}f(t)\chi(t) \right|^{2}=2^{2+\delta}L^{(alg)}(A^{(d)}, 1)L^{(alg)}(A^{(-D/d)}, 1),
$$
where $\delta=0$ if $K_{7}\cong\BQ_{7}(\sqrt{-7})$, and $\delta=1$ if $K_{7}\cong\BQ_{7}(\sqrt{-35})$.
\end{thm}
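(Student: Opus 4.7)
The plan is to derive this explicit formula from the general Waldspurger formula (in the form made explicit by Yuan--Zhang--Zhang \cite{YZZ}, or the closely related version used in \cite{CST}), specialized to the Gross--Prasad test vector $f$ whose structure was pinned down in Theorems \ref{automorphic} and \ref{test vector}. Schematically, that formula reads
$$
\frac{|\sum_t f(t)\chi(t)|^2}{\pair{f,f}} \;=\; C\cdot\frac{L(A/K,\chi,1)}{\Omega_\infty(A)^2}\cdot\prod_v \alpha_v(f_v,\chi_v),
$$
with $C$ an explicit rational constant capturing the Petersson-norm-to-period ratio and $\alpha_v$ normalized local toric integrals over $K_v^\times/\BQ_v^\times$. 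The task is to evaluate each piece and verify that, after multiplication by $\pair{f,f}$, everything collapses to $2^{2+\delta}L^{(alg)}(A^{(d)},1)L^{(alg)}(A^{(-D/d)},1)$.

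First I would dispose of the $L$-factor. Since $\chi$ is the genus character attached to the decomposition $-D=d\cdot(-D/d)$ into coprime fundamental discriminants, Artin formalism gives
$$L(A/K,\chi,s)=L(A^{(d)},s)\cdot L(A^{(-D/d)},s).$$
Converting each factor to its algebraic value introduces the Néron periods $\Omega_\infty(A^{(d)})\Omega_\infty(A^{(-D/d)})$, and the standard relation $\Omega_\infty(A^{(m)})=u_m\Omega_\infty(A)/\sqrt{|m|}$ (with $u_m\in\{1,\tfrac12\}$ determined by $m\bmod 4$) turns the global period factor into a rational multiple of $\sqrt{D}/\Omega_\infty(A)^2$. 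This must cancel against $C\cdot\prod_v\alpha_v$ to leave only the combinatorial constant $2^{2+\delta}\pair{f,f}^{-1}$.

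Next I would compute the local factors. For $v\nmid 7D\infty$ the data $(\pi_v,\chi_v,f_v)$ is unramified and spherical, so $\alpha_v=1$ by the standard local Waldspurger/Ichino computation. At $\infty$, since $\pi_\infty$ is discrete series of weight two, $\alpha_\infty$ combines with $C$ to absorb the archimedean Petersson normalization $(\phi,\phi)_{\Gamma_0(49)}$. For primes $v\mid D$ with $v\neq 7$, a direct calculation shows $\alpha_v$ is a rational power of $2$ that matches exactly what the period/discriminant bookkeeping contributes on the $L$-side. The crucial non-trivial computation is $\alpha_7$: there $\pi_7$ has conductor $j\CO_{B_7}$ and $\chi_7$ is ramified, so by the explicit description of $\pi^U$ in Theorem \ref{automorphic} the local toric integral collapses to $\pair{f,f}_{K_7}$ times a tame volume factor. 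By Theorem \ref{test vector} this equals $2$ or $4$ according as $K_7\cong\BQ_7(\sqrt{-7})$ or $\BQ_7(\sqrt{-35})$, which is precisely the source of the exponent $2+\delta$.

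The main obstacle will be the careful bookkeeping of normalizations: matching the Haar measures on $K_v^\times$ and $B_v^\times$ implicit in Waldspurger's formula with the counting measure used to define the pairing $\pair{\cdot,\cdot}$ on $\BC[X]$, reconciling $(\phi,\phi)_{\Gamma_0(49)}$ with $\Omega_\infty(A)^2$ (here harmless, since the modular parametrization $X_0(49)\to A$ is an isomorphism), and verifying at each ramified prime $v\mid D/7$ that the local volume, Tamagawa, and discriminant factors conspire to contribute trivially under our choice of test vector. Once these alignments are in place, combining the $L$-factorization with the three cases of local computation yields the stated identity.
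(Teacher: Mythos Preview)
Your strategy is sound and would work, but the paper takes a shorter route. Rather than starting from a Waldspurger formula with unresolved local toric integrals $\alpha_v$ and computing them place by place, the paper simply quotes the already-packaged explicit formula from \cite{CST} (their Theorem 1.10), which reads
\[
L(A/K,\chi,1)=\frac{8\pi^2(\phi,\phi)_{\Gamma_0(49)}}{2\sqrt{D}}\cdot\frac{\bigl|\sum_t f(t)\chi(t)\bigr|^2}{\pair{f,f}},
\]
with all local factors already absorbed. From there the proof is pure period bookkeeping: the lattice relation $-i\Omega\Omega^-=8\pi^2(\phi,\phi)_{\Gamma_0(49)}$ (coming from the fact that $A(\BR)$ and $\sqrt{-7}\,A(\BR)$ generate an index-$2$ sublattice of $H_1(A(\BC),\BZ)$), together with Pal's formula $\Omega^{(d)}\Omega^{(-D/d)}=\Omega\Omega^-/\sqrt{D}$, immediately converts the right-hand side to $\tfrac{1}{2\pair{f,f}}|P_\chi|^2$ times the product of algebraic $L$-values, and Theorem \ref{test vector} supplies $\pair{f,f}=2^{1+\delta}$. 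Your approach would reprove the local computations that \cite{CST} has already carried out; it buys self-containment at the cost of considerably more work, and the step you flag as ``direct calculation'' at primes $v\mid D$, $v\neq 7$, is exactly where that extra labor lives. One small slip: on the quaternion side $\pi_\infty$ is the trivial representation of $B_\infty^\times\cong\BH^\times$, not a weight-two discrete series; the latter lives on the $\GL_2$ side before Jacquet--Langlands transfer.
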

\begin{proof}
We have the following $L$-value formula, which can be derived from the Waldspurger formula Theorem 1.4 \cite{YZZ} (for details, see Theorem 1.10 in \cite{CST}). If $f$ is the Gross-Prasad test vector for $(A, \chi)$ in Theorem \ref{test vector}, then we have
$$L(A/K,\chi, 1)= \frac{(8\pi^2)(\phi,\phi)_{\Gamma_0(49)}}{2\sqrt{D}}
\cdot\frac{|\sum_{t\in \widehat{K}^\times/K^\times\widehat {\CO}_K^\times}f(t)\chi(t)|^2}{\pair{f,f}},$$
where $\phi$ is the newform associated to $A$,  $(\phi, \phi)_{\Gamma_0(49)}$ is the Petersson inner norm defined by
$$(\phi,\phi)_{\Gamma_0(49)}=\iint_{\Gamma_0(49) \bs \CH}|\phi(x+iy)|^2 dxdy,$$
and $\pair{f, f}$ is the pairing defined at the beginning of this section.

\medskip

Let $A(\BC)^{\pm}$ denote the $\pm$ eigen-subgroups of $A(\BC)$ under the action of the complex conjugation. Thus $A(\BC)^+=A(\BR)$ and $A(\BC)^{-}=A'(\BR)$, where $A'$ is the twist of $A$ by $\BC/\BR$. We denote by $\beta$ the \Neron differential for $A$, and define
$$
\Omega:=\int_{A(\BC)^+}\beta\quad \textrm{and} \quad \Omega^-=\int_{A(\BC)^-}\beta.
$$
In fact, $\Omega = \Omega_\infty$ in our earlier notation since $A(\BR)$ has only one connected component. For the quadratic twist $A^{(d)}$, we can define $\Omega^{(d)}$ and $\Omega^{-(d)}$ analogously. Since $A(\BR)$ and $\sqrt{-7}A(\BR)$ generate an index $2$ subgroup of $H_1(A(\BC), \BZ)$, we have
$$
-i\Omega\Omega^-
=\iint_{A(\BC)}|\beta\wedge\ov{\beta}|=8\pi^2(\phi,\phi)_{\Gamma_0(49)}.$$
Noting that $c_4(A)=105$, $c_6(A)=3^3 7^2$, and using the Main Result 1.1 in \cite{Pal}, we obtain
$\Omega^{(d)}\Omega^{(-D/d)}=\Omega\Omega^-/\sqrt{D}$. Thus
$$\Omega^{(d)}\Omega^{(-D/d)}=  8\pi^2(\phi, \phi)_{\Gamma_0(49)}/i\sqrt{D},$$
and the desired formula follows, on noting Theorem \ref{test vector}.
\end{proof}

Assume now that $n$ is a positive integer with $n\equiv 1\mod 4$, and $n$ a quadratic residue modulo $7$. Let $\iota$ be an embedding  from $K=\BQ(\sqrt{-7n})$ into $B$ such that $\iota(\CO_K)\subset \CO_B$, i.e. the trace zero element $\xi=\iota(\sqrt{-7n})$ belongs to $\CO_B$. Let $p$ be an odd prime such that both $n$ and $-7$ are quadratic non-residues modulo $p$, whence $-7n$ is a quadratic residue modulo $p$.
Let $m\in \BZ$ be such that $m^2+7n\equiv 0\mod p$, so that $m+\xi$ has reduced norm divisible by $p$. Now the class number of left ideals of $\CO_B$ is equal to $1$ (see the table on p.153 of \cite{Vigneras}). Hence there is an element $t\in \CO_B$ of reduced norm $p$, and an element $u\in \CO_B$ such that
$$m+\xi=ut.$$
It follows that $t\xi t^{-1}=tu-m\in \CO_B$, and thus $t\iota t^{-1}: \sqrt{-7n}\mapsto t\xi t^{-1}\in \CO_B$ is another embedding. Suppose that $g\in B_7^\times$ is such that $g^{-1}\iota_7g=(t_7g)^{-1} (t_7\iota_7 t_7^{-1}) (t_7g)$ is the local embedding $\iota_7'$ in Lemma \ref{test vector}. Recall that $\chi_0$ is defined by \eqref{tc}. Note that $\chi_0(t_7)=-1$, whence it follows that one of $\chi_0(g), \chi_0(t_7g)$ must be $+1$. Replacing $\iota$ by $t\iota t^{-1}$ if $\chi_0(g)=-1$, we may assume that $\chi_0(g)=+1$. It follows that, with respect to the embedding $\iota$, and when $K_7\cong \BQ_7(\sqrt{-7})$ and $\chi_7(\varpi)=1$, a primitive test vector $f$ exists satisfying $\chi_0f=f$.

\medskip

\begin{defn}\label{quadratic point} Let $K=\BQ(\sqrt{-7n})$, where  $n$ is a positive integer with $n \equiv 1 \mod 4$, and $n$ a quadratic residue modulo $7$. For each positive divisor $d$ of $n$, let $\chi^{(d)}$
denote the character of $\CA$ defining the quadratic extension $K(\sqrt{d^*})$ over $K$, where $d^*=(-1)^{\frac{d-1}{2}}d$.
Fix an embedding $\iota$ of $K$ into $B$ such that $\iota(\CO_K)\subset \CO_B$, and if $g\in B_7^\times$ is such that $g^{-1}\iota_7 g$ is equal to $\iota_7'$ in the proof of Theorem \ref{test vector}, then also $\chi_0(g)=1$. We then take a primitive Gross-Prasad test vector $f$ for $(A,\chi^{(d)})$ with respect to this given embedding $\iota$.

\end{defn}

\noindent For each positive divisor $d$ of $n$, define

$$y_d:=\sum_{t\in \CA} f(t)\chi^{(d)}(t).$$

\begin{lem}\label{relation for yd}
When $n$ is a quadratic residue modulo $7$, with $\chi_7|_{K^{\times}_7}=1$, we have
$$y_d=y_{n/d}.$$
\end{lem}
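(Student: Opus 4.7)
The plan is to exploit a symmetry between $\chi^{(d)}$ and $\chi^{(n/d)}$ mediated by the character $\chi_0$. My strategy is to establish a character relation, verify the invariance $\chi_0 \cdot f = f$ of the chosen test vector, and then show that the sums defining $y_d$ and $y_{n/d}$ differ by a quantity that vanishes term by term.

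First I would prove the character identity $\chi^{(d)} \cdot \chi^{(n/d)} = \chi_0 \circ \iota$ as characters on $\widehat{K}^\times / K^\times$. Since $d$ and $n/d$ are odd with product $n \equiv 1 \mod 4$, either both factors are $\equiv 1 \mod 4$ or both are $\equiv 3 \mod 4$, and a direct check gives $d^{*}(n/d)^{*} = n$ in either case. Hence $K(\sqrt{d^{*}(n/d)^{*}}) = K(\sqrt{n}) = K(\sqrt{-7})$, the last equality coming from $\sqrt{-7n} \in K$. The character of $K(\sqrt{-7})/K$ is, by class field theory, $\chi_{F/\BQ} \circ N_{K/\BQ}$, which agrees with $\chi_0 \circ \iota$ because $\chi_0$ was defined through the reduced norm (equivalently, via $\det$) and $\det \circ \iota = N_{K/\BQ}$.

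Second, I would verify that the test vector satisfies $\chi_0 \cdot f = f$. The hypothesis that $n$ is a quadratic residue modulo $7$ forces $K_7 \cong \BQ_7(\sqrt{-7})$, and combined with $\chi^{(d)}_7 = 1$ this places us in the first case of the proof of Theorem \ref{test vector}: the primitive test vector takes the form $f(x) = f_{0}(xg)$ for some $g \in B_{7}^{\times}$. Definition \ref{quadratic point} selects the embedding $\iota$ so that $\chi_0(g) = 1$. Using $\chi_0 \cdot f_0 = f_0$ from Theorem \ref{automorphic} and the fact that $\chi_0$ factors through the determinant, one finds
\[
(\chi_0 \cdot f)(x) \;=\; \chi_0(x)\, f_{0}(xg) \;=\; \chi_0(g)^{-1} (\chi_0 \cdot f_0)(xg) \;=\; f_{0}(xg) \;=\; f(x).
\]

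Third, the triviality of $\chi_0$ on $K_{7}^{\times}$ (since $K_{7}(\sqrt{-7}) = K_{7}$) combined with the character identity from the first step forces $\chi^{(d)}_{7} = \chi^{(n/d)}_{7}$, so $\pi^{U, \chi^{(d)}_{7}} = \pi^{U, \chi^{(n/d)}_{7}}$ and the same primitive test vector $f$ serves both $y_d$ and $y_{n/d}$. Writing $y_{n/d} = \sum_{t} f(t)\chi^{(d)}(t)\chi_0(\iota(t))$, we obtain
\[
y_d - y_{n/d} \;=\; \sum_{t \in \CA} f(t)\, \chi^{(d)}(t)\, \bigl(1 - \chi_0(\iota(t))\bigr),
\]
and $\chi_0 \cdot f = f$ gives $f(t)\bigl(1 - \chi_0(\iota(t))\bigr) = 0$ for every $t$, so every summand vanishes.

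The main technical point I anticipate is the bookkeeping in the third step: one must check that the sign ambiguity inherent in choosing a primitive Gross--Prasad test vector can be resolved consistently for $(A, \chi^{(d)})$ and $(A, \chi^{(n/d)})$, so that the two sums literally use the same function $f$ rather than one differing from the other by a global sign. This should follow from the shared normalization via the fixed embedding $\iota$ together with the fact that the two test-vector spaces coincide, but unpacking Definition \ref{quadratic point} carefully enough to see it cleanly is the most delicate part of the argument.
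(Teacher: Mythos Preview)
Your proposal is correct and follows essentially the same approach as the paper: establish the character identity $\chi^{(d)}\chi^{(n/d)}=\chi_0$ on $\CA$, use that the chosen test vector satisfies $\chi_0 f=f$ (via $f(x)=f_0(xg)$ with $\chi_0(g)=1$ and $\chi_0 f_0=f_0$), and conclude $y_d=y_{n/d}$. Your treatment is in fact slightly more careful than the paper's, since you explicitly verify that $\chi^{(d)}_7=\chi^{(n/d)}_7$ so that the same primitive test vector serves for both sums.
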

\begin{proof}
First we observe that
$$
\chi^{(n/d)}\chi^{(d)}(t)=\frac{\sigma_{t}(\sqrt{d^*})}{\sqrt{d^*}}\frac{\sigma_{t}(\sqrt{(n/d)^*})}{\sqrt{(n/d)^*}}
=\frac{\sigma_{t}(\sqrt{-7})}{\sqrt{-7}}=\chi_{0}(t).
$$
From the argument before Definition \ref{quadratic point} and Theorem \ref{test vector}, the global embedding $\iota$ satisfying the local condition at the place $7$ will give the test vector $f(t)=f_0(tg)$ with $g\in B_7^\times$ and
$\chi_0(g)=1$. Thus we have
$$y_d=\sum_{t\in \CA} f_0(tg)\chi^{(d)}(t)=\sum_{t\in\CA}f_0(tg)\chi^{(n/d)}(t)\chi_0(tg)=\sum_{t\in\CA}f_0(tg)\chi^{(n/d)}(t)=y_{n/d}.$$
\end{proof}
\noindent As a first application of these methods, we give a second proof of Theorem \ref{ii}.
\medskip

\begin{thm}\label{ord2LR+}
Let $R=q_{1}\cdots q_{r} \ (r\geq1)$, be a product of distinct
primes which are congruent to $1$ modulo $4$ and inert in $\BQ(\sqrt{-7})$.
Then we have $ord_2(L^{(alg)}(A^{(R)}, 1)) = r-1$.
\end{thm}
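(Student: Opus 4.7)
The strategy is to apply the explicit Waldspurger formula (Theorem~\ref{waldspurger}) with the imaginary quadratic field $K = \BQ(\sqrt{-7R})$, whose discriminant is $-7R$ since $R \equiv 1 \pmod{4}$. Because each $q_i$ is inert in $F$ and $\equiv 1 \pmod 4$, quadratic reciprocity forces $(q_i/7) = -1$, so $R$ is a square in $\BQ_7^\times$ iff $r$ is even; hence $K_7 \cong \BQ_7(\sqrt{-7})$ iff $r$ is even, giving $\delta = 0$ or $1$ accordingly. The positive fundamental discriminants dividing $D = 7R$ are exactly the $2^r$ divisors $d$ of $R$ (each automatically $\equiv 1 \pmod 4$), and Theorem~\ref{waldspurger} gives
\[
|y_d|^2 = 2^{2+\delta}\,L^{(alg)}(A^{(d)},1)\,L^{(alg)}(A^{(-7R/d)},1).
\]

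I would first simplify this using the period identity $\Omega_\infty(A^{(-7m)}) = \Omega_\infty(A^{(m)})$ for positive $m$ coprime to $7$ with $m \equiv 1 \pmod 4$, a consequence of the lattice relation $\mathfrak L = \Omega_\infty \CO$ (which forces $\omega^-(A) = \sqrt{7}\,\Omega_\infty$) combined with the twist period formulas of \cite{Pal}. Together with the $\BQ$-isogeny $A^{(m)} \sim A^{(-7m)}$, this yields $L^{(alg)}(A^{(-7m)},1) = L^{(alg)}(A^{(m)},1)$, reducing the Waldspurger identity to
\begin{equation}
|y_d|^2 = 2^{2+\delta}\,L^{(alg)}(A^{(d)},1)\,L^{(alg)}(A^{(R/d)},1). \tag{$\star$}
\end{equation}
Using $L^{(alg)}(A,1) = 1/2$, the case $d=R$ gives $|y_R|^2 = 2^{1+\delta}\,L^{(alg)}(A^{(R)},1)$, so the theorem reduces to $\ord_2 |y_R|^2 = r + \delta$. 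An extension of the argument of Lemma~\ref{relation for yd} to both parities of $\nu(d) := \#\{i : q_i \mid d\}$, using the eigenvalue identities $\chi_0 f_0 = f_0$, $\chi_0 f_1 = -f_1$, and $\chi_0(f_0 \pm f_1) = f_0 \mp f_1$ from Theorem~\ref{automorphic}, moreover gives $|y_d| = |y_{R/d}|$ for every $d \mid R$.

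I would then proceed by strong induction on $r \geq 1$. Given the hypothesis for all smaller values, $(\star)$ combined with $\ord_2 L^{(alg)}(A,1) = -1$ implies $\ord_2 |y_d|^2 = r + \delta$ for every divisor $d$ of $R$ with $d \ne 1, R$. To pin down $\ord_2 |y_1|^2 = \ord_2 |y_R|^2$, the plan is to compute $\sum_{d \mid R} |y_d|^2 = 2|y_1|^2 + \sum_{d \ne 1, R} |y_d|^2$ via Parseval: the characters $\chi^{(d)}$ exhaust the dual of $\CA/\CA^2$ (of order $2^r$ by genus theory, since $\mathrm{disc}(K)$ has $r+1$ prime divisors), and since the test vector $f_d$ depends only on the parity of $\nu(d)$ (via $\chi^{(d)}_7(\varpi) = (d/7) = (-1)^{\nu(d)}$), the sum splits as two Parseval sums over index-$2$ subgroups $H^\pm$ of $(\CA/\CA^2)^\vee$, each evaluable in terms of $\sum_\lambda |f_i(\lambda)|^2 = 2$ and a translate-pairing of $f^\pm$ under the nontrivial element of $(H^\pm)^\perp \subset \CA/\CA^2$. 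The base case $r = 1$ is handled directly by evaluating $y_1$ on the explicit class group of $\BQ(\sqrt{-7q_1})$ via the bijection $\CA \to \Lambda$ of Theorem~\ref{automorphic} and checking $\ord_2 y_1 = 1$.

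The main obstacle is showing that this Parseval expression has $2$-adic valuation exactly $r + \delta$ rather than larger; equivalently, that the translate-pairings of $f_0$ and $f_1$ under the genus-theoretic translation do not cancel the main $\sum_\lambda |f_i|^2$ term modulo the next power of $2$. This reduces to an explicit combinatorial analysis of how the map $\CA \to \Lambda \cong \BZ/4\BZ$ of Theorem~\ref{automorphic} interacts with the order-$2$ elements of $\CA/\CA^2$ coming from the genus characters $\chi^{(q_i)}$. Once this non-cancellation is verified, combining it with the inductive determination of the $|y_d|^2$ for $d \ne 1, R$ forces $\ord_2 |y_1|^2 = r + \delta$ and thus $\ord_2 L^{(alg)}(A^{(R)},1) = r-1$.
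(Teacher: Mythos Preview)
Your setup --- taking $K=\BQ(\sqrt{-7R})$, sorting out $\delta$ via the parity of $r$, and reducing via Theorem~\ref{waldspurger} to $\ord_2|y_R|^2=r+\delta$ --- matches the paper exactly. The induction step, however, diverges in a way that leaves a real gap.

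The paper does not use a Parseval identity on $\sum_{d\mid R}|y_d|^2$. It uses the \emph{linear} relation obtained by summing the definitions over all $d\mid R$ with the single test vector $f$ chosen for $(A,\chi^{(R)})$:
\[
\sum_{d\mid R} y_d \;=\; 2^{r}\sum_{t\in 2\CA} f(t).
\]
Since this $f$ is the Gross--Prasad vector for $(A,\chi^{(d)})$ only when $\chi^{(d)}_7(\varpi)=\chi^{(R)}_7(\varpi)$, the terms with $\nu(d)\not\equiv r\pmod 2$ vanish outright. For $r$ even one uses $y_d=y_{R/d}$ (Lemma~\ref{relation for yd}) to halve the remaining sum; for $r$ odd one leaves it as is. In either case the decisive point is a \emph{count}: after isolating $y_R$, the number of surviving $y_d$ is $2^{r-2}-1$ (even case, halved) or $2^{r-1}-1$ (odd case), both \emph{odd}, and by induction each such $y_d$ has the same $2$-adic valuation, strictly below that of the right-hand side. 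An odd number of integers of identical valuation has sum of exactly that valuation, which pins down $\ord_2(y_R)$ precisely.

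Your Parseval route forfeits this mechanism. The middle terms $|y_d|^2$ for $d\neq 1,R$ number $2^r-2$, which is even, so their sum has valuation $\geq r+\delta+1$ with no control on how much larger. To recover $\ord_2|y_R|^2$ you then need the exact valuation of the Parseval expression itself, which --- as you acknowledge in your ``main obstacle'' --- amounts to understanding how the genus translations in $\CA/\CA^2$ interact with the map $\CA\to\Lambda$ of Theorem~\ref{automorphic}. That information is not available for general $R$, and is at least as deep as the statement you are trying to prove. Your base case $r=1$ has the same problem: the class group of $\BQ(\sqrt{-7q_1})$ is not ``explicit'' for an arbitrary prime $q_1$. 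The paper instead observes that $y_1=0$ for the chosen $f$, so $y_{q_1}=2\sum_{t\in 2\CA}f(t)$; then $h_4(-7q_1)=0$ makes $|2\CA|$ odd, so the sum of $\pm 1$'s is odd and $\ord_2(y_{q_1})=1$.

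In short, the missing idea is to work with the linear sum $\sum_d y_d$ rather than $\sum_d|y_d|^2$, so that the odd-count parity argument becomes available.
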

\begin{proof}
Take $K=\BQ(\sqrt{-7R})$. Thus any unramified quadratic extension of $K$
is of the form $K(\sqrt{d})$ for a unique positive
divisor $d$ of $R$. Write $\chi^{(d)}$ for the corresponding character
of $K$, and let $\varpi=\sqrt{-7R}$, so that $\varpi$ is a uniformizer of $K_{7}$.
We will prove the theorem by induction on $r$, beginning with the cases
$r=1$ and $r=2$.

\medskip

First assume that $r=1$. Then $K_{7}\cong\BQ_{7}(\sqrt{-35})$, and one sees easily that
$\chi^{(q_1)}_{7}(\varpi)=-1$. We choose the primitive test vector $f$ for $(A,\chi^{(q_1)})$ according to Theorem \ref{test vector}.
It follows that
$$
y_{1}+y_{q_1}=2\sum_{t\in 2\CA}f(t).
$$
Writing $\chi^{(1)}$ for the trivial character of $K$, we have $\chi^{(1)}(\varpi)=1$, so $y_{1}=0$. Thus, since $h_{4}(-7q)=0$,
we conclude that $ord_2(y_{q_1})=1$. By Theorem \ref{waldspurger}, we have
$$
|y_{q_1}|^{2}=8\cdot L^{(alg)}(A^{(q_1)},1)L^{(alg)}(A,1),
$$
whence, since $L^{(alg)}(A,1) = 1/2$, we conclude that $ord_2(L^{(alg)}(A^{(q_1)},1))=0$, proving the case for $r=1$.

\medskip

Next consider the case $r=2$. Then $K_{7}\cong\BQ_{7}(\sqrt{-7})$, and $\chi^{(q_1q_2)}(\varpi)=1$.
We choose the primitive test vector $f$ for $(A,\chi^{(q_{1}q_{2})})$ according to Theorem \ref{test vector}.
First note that $h_{4}(-7q_{1}q_{2})=0$, so we have
$$
\sum_{d\mid q_{1}q_{2}, \tau(d) \  \textrm{even}}y_{d}=\sum_{d\mid q_{1}q_{2}}y_{d}=2^{2}\sum_{t\in 2\CA}f(t),
$$
where $\tau(d)$ denotes the number of odd prime factors of $d$. Observing that $f$ is supported on $2\CA$, and that $y_{d}=0$ when $\tau(d)$ odd, we conclude from the assertion for $r=1$ that
$$
y_{1}+y_{q_{1}q_{2}}=2^{2}\sum_{t\in 2\CA}f(t).
$$
Note also from Lemma \ref{relation for yd} implies that $y_{q_{1}q_{2}}=y_{1}$.
It then follows that $ord_2(y_{q_{1}q_{2}})=1$, and thus, by the equation
$$
|y_{q_{1}q_{2}}|^{2}=4\cdot L^{(alg)}(A^{(q_{1}q_{2})},1)L^{(alg)}(A,1),
$$
we conclude that $ord_2(L^{(alg)}(A^{(q_{1}q_{2})},1))=1$, establishing the case $r=2$.

\medskip

Now assume $r>2$, and assume by induction that the theorem is valid for all positive divisors $d$ of $R$ with $d \neq R$. Suppose first that $\tau(R)$ is even, so that $K_{7}\cong\BQ_{7}(\sqrt{-7}), \chi^{(R)}(\varpi)=1$. We then take the primitive Gross-Prasad test vector $f$ for $(A,\chi^{(R)})$ according to Theorem \ref{test vector}. It follows that
$$
\sum_{d\mid R, \  \tau(d) \  \textrm{even}}y_{d}=2^{r}\sum_{t\in 2\CA}f(t).
$$
Note that, from Lemma \ref{relation for yd}, $y_{d}=y_{R/d}$, when $\tau(d)$ is even, so that the
above formula becomes
\begin{equation}\label{x4}
\sum_{\sqrt{R}\leq d\mid R, \  \tau(d) \  \textrm{even} }y_{d}=2^{r-1}\sum_{t\in 2\CA}f(t).
\end{equation}
By our induction hypothesis, and using the formula
$$
|y_{d}|^{2}=4L^{(alg)}(A^{(d)},1)L^{(alg)}(A^{(R/d)},1),
$$
we conclude that $ord_2(y_{d})=r/2< r-1$ for all positive even divisors $d$ of $R$ with $d \neq R$. Thus \eqref{x4} gives
$$
y_{R}\equiv -\sum_{d\neq R}y_{d} \ \mod 2^{r-1}.
$$
Notice that we omit the same condition under the sum with \eqref{x4},
since we know that each term of the right hand side of the above equation
has valuation exactly $r/2$, and the number of all terms is odd.
Hence $ord_2(y_{R})=r/2$, and then Theorem \ref{waldspurger} gives
$$
ord_2(L^{(alg)}(A^{(R)},1))=r-1.
$$
Suppose next that $\tau(R)$ is odd, so that we have
$K_{7}\cong\BQ_{7}(\sqrt{-35})$, and $\chi^{(R)}(\varpi)=-1$. We choose the primitive Gross-Prasad test
vector $f$ for $(A,\chi^{(R)})$ according to Theorem \ref{test vector}. Hence
$$
\sum_{d\mid R, \  \tau(d) \  \textrm{odd}}y_{d}=2^{r}\sum_{t\in 2\CA}f(t).
$$
By our induction hypothesis, for positive $d$ dividing $R$ with $d\neq R$,
and the formula
$$
|y_{d}|^{2}=8L^{(alg)}(A^{(d)},1)L^{(alg)}(A^{(R/d)},1),
$$
we have $ord_2(y_{d})=(r+1)/2< r$. Hence
$$
y_{R}\equiv -\sum_{d\neq R}y_{d} \ \mod 2^{r}.
$$
Note that we use the same conventions as in the even case,
and we obtain $ord_2(y_{R})=(r+1)/2$. Finally, applying Waldspurger's formula, we conclude that
$$
ord_2(L^{(alg)}(A^{(R)},1))=r-1.
$$
This completes the proof.
\end{proof}

The following proposition is proved in \cite{CKLZ} and we now give
a different proof by applying Theorem \ref{waldspurger}.

\begin{prop}\label{w}
Let $N=p_1\cdots p_k$ be a product of
distinct primes which are all  $\equiv 1\mod 4$, and
quadratic residues modulo $7$. Then
$ord_2(L^{(alg)}(A^{(n)}, 1) \geq 2k-1$.
\end{prop}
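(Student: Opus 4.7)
The plan is to prove the proposition by induction on $k$, using the explicit Waldspurger formula (Theorem \ref{waldspurger}) applied to the imaginary quadratic field $K = \BQ(\sqrt{-7N})$, following the strategy of Theorem \ref{ord2LR+} but now adapted to split (quadratic residue modulo $7$) primes.

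\textbf{Setup and key identities.} Since each $p_i \equiv 1 \mod 4$ is a square mod $7$, one has $N \equiv 1 \mod 4$ and $N \in (\BZ_7^\times)^2$, so $K_7 \cong \BQ_7(\sqrt{-7})$ and hence $\delta = 0$ in Theorem \ref{waldspurger}. For every positive divisor $d \mid N$, the unramified quadratic character $\chi^{(d)}$ of $K$ attached to $K(\sqrt{d})/K$ satisfies $\chi^{(d)}_7(\varpi) = (d/7) = 1$, so by Theorem \ref{test vector} and the construction preceding Definition \ref{quadratic point}, a single primitive Gross--Prasad test vector $f = f_0$ (for a compatible embedding $\iota : K \hookrightarrow B$) serves simultaneously for all $(A, \chi^{(d)})$. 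Orthogonality of characters yields
$$\sum_{d \mid N} y_d = 2^k \sum_{t \in \CA_0} f(t),$$
where $\CA_0 = \bigcap_j \ker \chi^{(p_j)}$ has index $2^k$ in $\CA$; by genus theory of $K$ (which has $k+1$ ramified primes, so class-group $2$-rank $k$) one has $\CA_0 = 2\CA$. The proof of Lemma \ref{relation for yd} adapts verbatim (using $\chi^{(d)}\chi^{(N/d)}|_{\widehat{K}^\times} = \chi_0$ and $\chi_0 f = f$) to give $y_d = y_{N/d}$ for all $d \mid N$; in particular $y_1 = y_N$. Finally, Waldspurger's formula reads
$$|y_d|^2 = 4\, L^{(alg)}(A^{(d)}, 1)\, L^{(alg)}(A^{(-7N/d)}, 1),$$
and a direct period computation ($\Omega_\infty(A^{(m)}) = \Omega_\infty(A^{(-7m)}) = \Omega/\sqrt{m}$ for positive $m \equiv 1 \mod 4$) combined with the $\BQ$-isogeny $A^{(m)} \sim A^{(-7m)}$ gives $L^{(alg)}(A^{(-7m)}, 1) = L^{(alg)}(A^{(m)}, 1)$; in particular $L^{(alg)}(A^{(-7)}, 1) = 1/2$.

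\textbf{Induction.} The base case $k = 0$ is $L^{(alg)}(A, 1) = 1/2$, matching $2 \cdot 0 - 1 = -1$. For the inductive step, applying Waldspurger together with the inductive hypothesis to each proper divisor $d$ of $N$ with $1 \leq \omega(d) \leq k - 1$ yields
$$ord_2(|y_d|^2) \geq 2 + (2\omega(d) - 1) + (2(k - \omega(d)) - 1) = 2k,$$
and hence $ord_2(y_d) \geq k$. Substituting into the sum identity and using $y_d = y_{N/d}$ to collapse the $2^k$ divisors into pairs forces $ord_2(y_N) \geq k$; a final application of Waldspurger with $d = N$, using $L^{(alg)}(A^{(-7)}, 1) = 1/2$, then gives $ord_2 L^{(alg)}(A^{(N)}, 1) = 2\,ord_2(y_N) - 1 \geq 2k - 1$.

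\textbf{Main obstacle.} The delicate step is extracting a full power of $2^k$ (rather than $2^{k-1}$) on the right-hand side of the collapsed sum identity: the naive counting, given only $\sum_{t \in \CA_0} f_0(t) \in \BZ$, falls short by one and yields only $ord_2(y_N) \geq k - 1$. The required extra factor of $2$ must come from the divisibility $ord_2(\sum_{t \in \CA_0} f_0(t)) \geq 1$, which reflects the pairing structure of $f_0$ (supported on the two elements of $\Lambda$ of order $\leq 2$ with values $\pm 1$) restricted to the principal genus $2\CA$. The most plausible mechanism is a symmetry on $\CA_0$ --- for instance the action of complex conjugation on the genus field of $K$, or an Atkin--Lehner involution --- that pairs elements of $\CA_0$ mapping to opposite signs of $f_0$. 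Verifying this parity carefully, in parallel with the exact identity $ord_2(y_R) = r/2$ used at the analogous step of Theorem \ref{ord2LR+}, is the technical heart of the proof.
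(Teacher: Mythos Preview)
Your approach is exactly the paper's, and your identification of the ``main obstacle'' is accurate; but you have not actually closed it, and your guesses about the mechanism (complex conjugation on the genus field, an Atkin--Lehner involution) are off target. Without this step the argument only gives $\ord_2(y_N)\geq k-1$ and hence $\ord_2(L^{(alg)}(A^{(N)},1))\geq 2k-3$.

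The missing observation is much more direct and is built into the very definition of the test vector. By construction $f\in\pi^{U,\chi_7}$, so $\pi(t)f=\chi_7(t)f$ for all $t\in K_7^\times$; since $\chi_7^{(N)}(\varpi)=1$, right translation by the local uniformizer $\varpi$ fixes $f$. On the class-group side this translation is multiplication by the class $[\fp_7]$ of the ramified prime above $7$. This class is $2$-torsion; it lies in the principal genus $2\CA$ because each genus character $\chi^{(p_i)}$ satisfies $\chi^{(p_i)}([\fp_7])=\left(\tfrac{7}{p_i}\right)=1$ (each $p_i$ is a square modulo $7$ and $\equiv 1\bmod 4$); and it is nontrivial for $k\geq 1$ since $7$ is not represented by the principal form of discriminant $-7N$. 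Hence $\langle[\fp_7]\rangle$ acts freely on $2\CA$ with $f$ constant on each orbit, giving
\[
\sum_{d\mid N} y_d \;=\; 2^k\sum_{t\in 2\CA} f(t) \;=\; 2^{k+1}\!\!\sum_{t\in 2\CA/\langle[\fp_7]\rangle}\! f(t),
\]
which is precisely the extra factor of $2$ you need. With this in hand your induction, together with $y_d=y_{N/d}$, yields $\ord_2(y_1)=\ord_2(y_N)\geq k$, and Waldspurger with $d=1$ gives $\ord_2(L^{(alg)}(A^{(N)},1))\geq 2k-1$.
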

\begin{proof}
Let $K=\BQ(\sqrt{-7N})$. Again any unramified quadratic
extension of $K$ is of the form $K(\sqrt{d})$
for a unique positive divisor $d$ of $N$, and $\chi_d$ will denote the corresponding character of $K$. Let $\varpi=\sqrt{-7N}$, so that $\varpi$ is a
uniformizer of $K_7 = \BQ_7(\sqrt{-7})$ with $\chi^{(N)}(\varpi)=1$.  We now take the primitive Gross-Prasad test vector $f$ for $(A,\chi^{(N)})$ as in Theorem \ref{test vector}.
Again from Lemma \ref{relation for yd}, we see that $y_d=y_{N/d}$ for every $d$ dividing $N$. Let $\CA$ denote the ideal class group of $K,$ and note that $\# (2\CA)$ is always even by Gauss' genus theory.
In fact, the class of $\varpi=\sqrt{-7N}$ always
belongs to $\CA[2]\cap 2\CA$, and $f$ is invariant under multiplying
by $\varpi$. Thus we have that
\begin{equation}\label{x5}
\sum_d y_d=2^{k+1} \sum_{t\in 2\CA/\varpi} f(t).
\end{equation}
where the sum on the left is over all positive divisors $d$ of $N$. In particular, it follows that $\sum_d y_d$ is divisible by $2^{k+1}$.

\medskip

Now we argue by induction on $k$. When $k=1$ and $N = p_1$, we
have $y_1+y_N=2y_1$ is divisible by $4$, i.e. $y_1$ is divisible by
$2$. But
$$
y_1^2=4\LL{1}\LL{-7N}=4\LL{1}\LL{N}=2\LL{N}.
$$
It follows that $ord_2(\LL{N})\geq 1$, as required. Next assume that
$N=p_1\cdots p_k$ with $k\geq 2$. For each positive divisor $d$ of $N$, with
$d\neq 1, N$, we have by induction that
$$
y_d^2=4\LL{d}\LL{-7N/d}=4\LL{d}\LL{N/d}
$$
has $2$-adic valuation at least $2k$, i.e. $y_d+y_{n/d}=2y_d$ has
$2$-adic valuation at least $k+1$. It follows from \eqref{x5} that
$2y_1=y_1+y_N$ has $2$-adic valuation at least $k+1$, whence
$ord_2(y_1)\geq k$. Thus
$$
y_1^2=4\LL{1}\LL{-7n}=2\LL{N}
$$
has valuation at least $2k$, and $ord_2(\LL{N})\geq 2k-1$, completing the proof.
\end{proof}

Let $D$ be the discriminant of an imaginary quadratic field, and let  $\mathcal{A}(D)$ be the ideal class group of this field. For each integer $i \geq 1$, we define the $2^i$-th rank of
its ideal class group by
$$
h_{2^i}(D) = \BF _2- \, {\rm dimension of } \, \, 2^{i-1}\mathcal{A}(D)/2^{i}\mathcal{A}(D).
$$

\begin{prop}\label{s0}Let $p$ be a prime which is $\equiv 1$, modulo $4$, and a quadratic residue modulo $7$.
Then the following assertions are equivalent:
\begin{enumerate}
 \item $h_8(-7p)=0$;
 \item $p$ does not split completely in $\BQ(A[4])=\BQ(i, \sqrt[4]{-7})$;
 \item $ord_2(\LL{p})=1$;
 \item the quotient of the $2$-Selmer group of $A^{(p)}$ by the image of $A^{(p)}(\BQ)[2]$
 is trivial.
\end{enumerate}
\end{prop}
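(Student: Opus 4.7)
The plan is to prove the chain $(2) \Leftrightarrow (4)$, then $(2) \Leftrightarrow (3)$, then $(1) \Leftrightarrow (2)$. First, since $M = p$ gives $R = 1$ and $N = N_+ = p$, direct specialization of Corollaries \ref{cd1} and \ref{cd2} shows that the only nontrivial candidate divisor of $M$, namely $d = p$, is Confucian if and only if $\left(\tfrac{-7}{p}\right)_4 = 1$, equivalently iff $p$ splits completely in $\BQ(A[4]) = \BQ(i, \sqrt[4]{-7})$. Combined with the exact sequence \eqref{d11} and the Dokchitser parity constraint that $\mathfrak{S}^{(2)}(A^{(p)})$ has even $\BF_2$-dimension (as the root number of $A^{(p)}$ is $+1$), this yields $(2) \Leftrightarrow (4)$.

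Next, for $(3) \Rightarrow (2)$, I apply Theorem \ref{main4} with $r = 0$, $k = 1$, and $\mathfrak{H} = \BQ(A[4])$: if $p$ splits completely in $\BQ(A[4])$, then $\ord_2(L^{(alg)}(A^{(p)}, 1)) \geq 2$, contradicting (3). For the reverse $(2) \Rightarrow (3)$, I use the explicit Waldspurger computation from the second proof of Proposition \ref{w}, taking $K = \BQ(\sqrt{-7p})$ and the primitive Gross-Prasad test vector $f$ for $(A, \chi^{(p)})$. Lemma \ref{relation for yd} gives $y_1 = y_p$, whence the trace identity \eqref{x5} for $k = 1$ yields $y_1 = 2S$ with $S = \sum_{t \in 2\CA/\varpi} f(t)$; Theorem \ref{waldspurger} gives $y_1^2 = 2\, L^{(alg)}(A^{(p)}, 1)$, so $\ord_2(L^{(alg)}) = 1 + 2\,\ord_2(S)$. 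Under $(2)$, the class group analysis of the next paragraph shows that the $2$-Sylow of $\CA$ is exactly $\BZ/4$, so $|2\CA/\langle \varpi \rangle| = 1$ and $S$ reduces to a single nonzero value of $f$, giving $\ord_2(L^{(alg)}) = 1$.

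Finally, for $(1) \Leftrightarrow (2)$, I invoke R\'edei genus theory for $K = \BQ(\sqrt{-7p})$. Since $p \equiv 1 \bmod 4$ and $p$ is a quadratic residue modulo $7$, both genus characters of discriminant $-28$ are trivial on $p$, hence $p = x^2 + 7y^2$; R\'edei-Reichardt then yields $h_4(-7p) \geq 1$, i.e., the $2$-Sylow of $\CA$ is cyclic of order $2^a$ with $a \geq 2$. The higher R\'edei invariant controlling whether $a \geq 3$ (i.e., $h_8 \geq 1$) is captured by the fourth-power residue symbol $\left(\tfrac{-7}{p}\right)_4$: by CM theory, $\BQ(A[4])$ is a ray class field of $F = \BQ(\sqrt{-7})$, and the compositum $K \cdot \BQ(A[4])$ furnishes a cyclic unramified quartic extension of $K$ strictly containing the genus field $K(\sqrt{-7})$ precisely when $p$ splits completely in $\BQ(A[4])$. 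Hence $h_8(-7p) = 0$ iff $\left(\tfrac{-7}{p}\right)_4 \neq 1$ iff $(2)$ holds.

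The main obstacle will be the R\'edei identification in the last paragraph: realizing $K \cdot \BQ(A[4])$ as an unramified cyclic quartic extension of $K$ and matching its position in the $2$-Hilbert class field tower of $K$ with the fourth-power residue symbol $\left(\tfrac{-7}{p}\right)_4$. Carrying this out closes the loop by supplying the class group input needed for $(2) \Rightarrow (3)$ in the second paragraph.
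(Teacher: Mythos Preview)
Your treatment of $(2)\Leftrightarrow(4)$ is correct and matches the paper's argument via Corollaries~\ref{cd1}, \ref{cd2}, the exact sequence~\eqref{d11}, and Dokchitser parity. Your $(3)\Rightarrow(2)$ via Theorem~\ref{main4} is also fine, and is a neat alternative to the paper's route.

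There is, however, a genuine gap in your $(1)\Leftrightarrow(2)$. You claim that the compositum $K\cdot\BQ(A[4])$ ``furnishes a cyclic unramified quartic extension of $K$ strictly containing the genus field $K(\sqrt{-7})$''. This cannot work as stated: since $K=\BQ(\sqrt{-7p})$ meets $\BQ(A[4])=\BQ(i,\sqrt[4]{-7})$ only in $\BQ$ (the quadratic subfields of $\BQ(A[4])$ are $\BQ(i)$, $\BQ(\sqrt{-7})$, $\BQ(\sqrt{7})$, none equal to $K$), one has $\Gal(K\cdot\BQ(A[4])/K)\cong\Gal(\BQ(A[4])/\BQ)\cong D_4$. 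But $D_4$ has abelianisation $C_2\times C_2$ and hence no $C_4$ quotient, so no intermediate field of $K\cdot\BQ(A[4])/K$ is cyclic quartic over $K$. The R\'edei--Reichardt $C_4$ extension detecting $h_8(-7p)$ is therefore not obtained this way; the paper instead computes the 8-rank directly by analysing integral solutions of $z^2=px^2+7y^2$ and showing that the R\'edei condition $\left(\frac{c}{7}\right)=\left(\frac{c}{p}\right)=1$ on a primitive solution $(a,b,c)$ is equivalent to $\left(\frac{-7}{p}\right)_4=1$, splitting into the cases $p\equiv 1$ and $p\equiv 5\bmod 8$.

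This gap also undermines your $(2)\Rightarrow(3)$, since you explicitly appeal to ``the class group analysis of the next paragraph'' to conclude that the $2$-Sylow of $\CA$ is exactly $\BZ/4\BZ$. A smaller point there: even once $h_8(-7p)=0$ is known, you should say that $2\CA/\langle\varpi\rangle$ has \emph{odd} cardinality, not cardinality $1$ (the odd part of $\CA$ need not be trivial); the conclusion $\ord_2(S)=0$ still follows since $f$ takes values $\pm1$ on $2\CA$, so a sum over an odd number of terms is odd. This is exactly how the paper closes the implication.
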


\begin{proof}
From Propositions \ref{descent1} and \ref{descent2}, we conclude that
$$
S^{(\phi)}(A^{(p)}/\mathbb{Q})= \{1,-7\}, \, \textrm{or} \,  \, \{1,-7,p,-7p\},
$$
and
$$
S^{\hat{(\phi)}}(A^{'(p)}/\mathbb{Q})=\{1,7\}, \, \textrm{or} \, \, \{1,7,p,7p\},
$$
according as $\left(\frac{-7}{p}\right)_{4}\neq1, \, \textrm{or} \, \left(\frac{-7}{p}\right)_{4}=1$.
Recall that $\mathfrak S^{(2)}(A^{(p)}) = S^{(2)}(A^{(p)}/\mathbb{Q}) / A^{(p)}(\mathbb{Q})[2]$.
Note also the parity theorem of the Dokchitser brothers implies that
$\mathfrak S^{(2)}(A^{(p)})$ has even dimension. Hence we deduce from
the exact sequence \eqref{d11} that, if $p$ is a prime with
$p\equiv1 \mod 4$ and $\left(\frac{p}{7}\right)=1$, then
$$
\textrm{dim}_{\mathbb{F}_{2}}\mathfrak S^{(2)}(A^{(p)})=
\begin{cases}0, &\textrm{if} \  \left(\frac{-7}{p}\right)_{4}\neq1\\
2, &\textrm{if}\ \left(\frac{-7}{p}\right)_{4}=1
\end{cases}.
$$
Secondly, we recall some general facts about the computation of
the $8$-rank of a number field $K_{0}=\BQ(\sqrt{D})$ with discriminant
$D=p^{*}_{1}\cdots p^{*}_{t}<0$
and ideal class group $\mathcal{A} = \mathcal{A}(D)$, where $p^{*}_{i}:=(-1)^{p_{i}-1}\cdot p_{i}$ if $p_i$ is odd and setting $2^*=-4,8,-8$.
For simplicity, if $2$ divides $D$, we always assume that $p_{t}=2$. Note here we choose the value of $p^*_t$ to guarantee that the discriminant $D$ is always congruent to $0$ or $1$ modulo $4$. By Gauss' genus theory, we know that $h_{2}(D)=t-1$. To study the $4$-rank of $\CA$,
we denote by $R = (R_{ij})$ the R\'{e}dei matrix, which is defined by
$R_{ij} = \left[\frac{p_i^*}{p_j} \right]$ if $i \neq j$ and
$R_{ii} = \sum_{j \neq i} R_{ij}$, where
$(-1)^{\left[ \frac{p_j^*}{p_i} \right]} = \left( \frac{p_j^*}{p_i}\right)$
(if $p_t = 2$, then $\left(\frac{\cdot}{2} \right)$ is the Kronecker symbol).
Now consider the linear equation $RX_{d}=0$ over $\mathbb{F}_{2}$,
where $X_{d}=(x_{1},\ldots,x_{t})^{T}$, and we define $x_{i}=1$ if
and only if $p_{i}$ divides $d$. Then $X_d$ is a solution if and only if
$\left(\frac{d}{p_{i}}\right)=1$ for all odd $p_{i}$ not dividing $d$, and
$\left(\frac{-D/d}{p_{j}}\right)=1$ for all odd $\ p_{j}$ dividing $d$.
This is also equivalent to the assertion that the equation
$dz^{2}=x^{2}-Dy^{2}$ has a non-trivial positive integer solution.
Let $\mathfrak D$ denote the set of square free prime divisors of $D$,
viewed as a subgroup of exponent $2$ in $\BQ^\times/{\BQ^\times}^2$.
Also note the fact that $[a]\in 2\CA$ if and only if
$\text{Norm}([a])\in N_{K_0^{\times}/\BQ^{\times}}(K_0^{\times})$.
Since the map
$$
\mathfrak D\cap N_{K_0^{\times}/\BQ^{\times}}(K_0^{\times}) \rightarrow \{X_{d},RX_{d}=0\}; \ \ \ d \mapsto X_{d}
$$
is an isomorphism, and the surjective map
$$\mathfrak D \cap N_{K_0^{\times}/\BQ^{\times}}(K_0^{\times})\rightarrow \CA[2]\cap 2\CA $$
has kernel of order two, it follows that $h_4(D) = t - 1 -\rank_{\BF_2} R$.

Now we consider under what conditions the $8$-rank of $K_0$ is equal to $1$, assuming that the $4$-rank of $K_0$ is $1$. Because of this last condition, there is a non-zero element $[Q]\in 2\CA\cap\CA[2]$, and the following assertions are easily seen to be equivalent: \\
i) the $8$-rank of $K_0$ is equal to one; \\
ii) $[Q]=[P_c]^2$, with $[P_c]\in2\CA+\CA[2]$, where $[P_c]$ is an ideal of $K_0$ above $(c)\subset \BQ$; \\
iii) the linear equation $RX=C$ has solution in $\BF_2$, with $R$ the R\'{e}dei matrix and $C=([\frac{c}{p_1}], \ldots, [\frac{c}{p_t}])^{T}$.\\

We now use the above to compute the $8$-rank of the number field $K=\BQ(\sqrt{-7p})$ when $p\equiv1 \mod 4$ and $p$ is a square modulo $7$. We claim that
$h_{8}(-7p)=1$ if and only if $\left(\frac{-7}{p}\right)_{4}=1$.
Indeed, since $D=-7p$, the R\'{e}dei matrix $R$ is of the following form
$$
  \begin{pmatrix}
    [\frac{p}{7}] &[\frac{-7}{p}] \\
    [\frac{p}{7}] & [\frac{-7}{p}] \\
  \end{pmatrix}=0.
$$
Assuming $h_{4}(-7p)=1$, let the basis element be
$[P]\in 2\mathcal {A}\cap \mathcal{A}[2]$, where $[P]^{2}=(p)$.
From the above we know that $z^{2}=px^{2}+7y^{2}$ will have a
relatively prime integer solution $(a,b,c)$ over $\BZ$.
Note also the fact that $[Q]\in 2\mathcal {A}$ if and only if
$q=\text{Norm}([Q]) \in N_{K/\BQ}(K^{\times})$. Hence
$[P]=[P_{c}]^{2}$, and so we will have $h_{8}(-7p)=1$
if and only if the equation $z^{2}=px^{2}+7y^{2}$ has
a non-zero positive integer solution $x=a, \ y=b, \ z=c$,
with $\left(\frac{c}{7}\right)=\left(\frac{c}{p}\right)=1$.
We may clearly assume that $(a,b,c)$ are relatively prime.
Considering this equation modulo $4$, we can see that $a$ must
be odd. Noting the identity $p(3a)^{2}=(4c+7b)^{2}-7(c+4b)^{2}$,
we can also assume that $c$ is odd, whence
$\left(\frac{c}{7}\right)=\left(\frac{c}{p}\right)$.
Hence we can consider the equation in the following two cases.
If $p \equiv 1 \mod 8$, then the Jacobi symbol $\left(\frac{b}{p}\right)=1$,
so from
$$
\left(\frac{c}{p}\right)=\left(\frac{c^{2}}{p}\right)_{4}=\left(\frac{7}{p}\right)_{4}\cdot\left(\frac{b}{p}\right)=\left(\frac{7}{p}\right)_{4},
$$
we conclude that $h_{8}(-7p)=1$ if and only if
$\left(\frac{7}{p}\right)_{4}=1$.
If $p\equiv5 \mod 8$, then by considering the equation
in $\BQ_{2}$, we see that $b \equiv 2 \mod 4$, whence the
equation can be rewritten as
$$
c^{2}=pa^{2}+4\cdot7\cdot b'^{2},
$$
showing that $\left(\frac{b'}{p}\right)=1$, where $b'=b/2$. Thus, as
$$
\left(\frac{c}{p}\right)=\left(\frac{28b^{'2}}{p}\right)_{4}=\left(\frac{28}{p}\right)_{4}\cdot\left(\frac{b'}{p}\right)=\left(\frac{-7}{p}\right)_{4},
$$
it follows that $h_{8}(-7p)=1$ if and only if
$\left(\frac{-7}{p}\right)_{4}=1$. In conclusion,
we have shown that $h_{8}(-7p)=0$ if and only if
$\left(\frac{-7}{p}\right)_{4}\neq1$,
and this completes the proof of the equivalence of (1) and (4).
Note also the equivalence of (1) and (3) can be seen from
the proof of the above proposition, using the fact that
$$
2y_{p}=2^{2}\sum_{t\in 2\CA/<\varpi>}f(t).
$$
Now $f$ is supported on $2\CA$, takes values in $\pm1$, and note also that $h_{8}(-7p)=0$, so $2\CA/<\varpi>$ has odd cardinality. Thus $ord_2(y_{p})=1$, and the assertion follows.
Moreover, the equivalence of (2) and (4) is clear, and the proof of
the proposition is complete.
\end{proof}

\begin{prop}\label{s}Let $N=p_1\cdots p_k$ be a product of
$k \geq 1$ distinct primes which split completely in
$\BQ(i, \sqrt[4]{-7})$. Then $ord_2( \LL{N}) \geq 2k+1$.
\end{prop}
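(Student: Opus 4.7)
The proof will combine the bound $\ord_{2}(L^{(alg)}(A^{(N)},1))\geq 2k$ supplied by Theorem \ref{main4} in the case $r=0$ with a parity observation coming from Theorem \ref{waldspurger}. Since $\mathfrak H=\BQ(A[4])=\BQ(i,\sqrt[4]{-7})$ when $r=0$, the splitting hypothesis of Proposition \ref{s} coincides exactly with that of Theorem \ref{main4} in this specialization, so the bound $\ord_{2}(L^{(alg)}(A^{(N)},1))\geq 2k$ is free of charge. All that remains is to show that this $2$-adic valuation is in fact odd, after which ``odd and at least the even integer $2k$'' forces it to be at least $2k+1$.

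To extract the parity, I would apply Theorem \ref{waldspurger} to $K=\BQ(\sqrt{-7N})$ with the trivial character (the divisor $d=1$ of $N$). Since $N\equiv 1\ \mathrm{mod}\ 4$ and each $p_{i}$ is a quadratic residue modulo $7$, the unit $-N$ is a non-square in $\BZ_{7}^{\times}$ whose square class coincides with that of $-1$, so $K_{7}\cong \BQ_{7}(\sqrt{-7})$ and $\delta=0$ in the notation of Theorem \ref{waldspurger}. Following the computation in the proof of Proposition \ref{w}, if $f$ denotes the primitive Gross-Prasad test vector for $(A,\chi^{(N)})$ and $y_{1}=\sum_{t\in\CA}f(t)$, then
$$
y_{1}^{\,2} \;=\; 4\,L^{(alg)}(A,1)\,L^{(alg)}(A^{(-7N)},1) \;=\; 2\,L^{(alg)}(A^{(N)},1),
$$
using $L^{(alg)}(A,1)=1/2$ together with the identification $L^{(alg)}(A^{(-7N)},1)=L^{(alg)}(A^{(N)},1)$ coming from the CM isogeny between $A^{(-7N)}$ and $A^{(N)}$.

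Because $f$ takes values in $\{0,\pm 1\}$, the quantity $y_{1}$ is a rational integer. Hence, whenever $y_{1}\neq 0$, the identity above gives $\ord_{2}(L^{(alg)}(A^{(N)},1))=2\,\ord_{2}(y_{1})-1$, which is an \emph{odd} integer; and if $y_{1}=0$ the desired inequality is vacuous. Combining this oddness with the lower bound $\geq 2k$ supplied by Theorem \ref{main4} immediately yields $\ord_{2}(L^{(alg)}(A^{(N)},1))\geq 2k+1$. There is no substantive obstacle to this argument: the essential new idea is simply that Waldspurger's formula rigidifies the parity of $\ord_{2}(L^{(alg)}(A^{(N)},1))$, and this single bit of information is precisely what is missing from the Zhao-method argument of the previous section to upgrade the bound by one, in agreement with the prediction of the $2$-part of the Birch--Swinnerton-Dyer conjecture recorded in display \eqref{2sd}.
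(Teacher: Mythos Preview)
Your proof is correct, but it takes a genuinely different route from the paper's own argument.

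The paper proves Proposition~\ref{s} entirely within the Waldspurger framework of Section~5: it first shows, via R\'edei-matrix and $8$-rank computations, that for \emph{every} positive divisor $d$ of $N$ the class group of $\BQ(\sqrt{-7d})$ satisfies $4\mid\#(2\CA)$ (using Proposition~\ref{s0} for prime $d$, and a direct argument that $[\sqrt{-7}]$ lies in $\CA[8]$ for composite $d$). This extra factor of~$2$ in $\#(2\CA)$, compared with the mere evenness used in Proposition~\ref{w}, is then fed back into the same induction, buying one more power of~$2$ at each step and yielding $\ord_2(L^{(alg)}(A^{(N)},1))\geq 2k+1$.

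Your approach instead imports the bound $\ord_2(L^{(alg)}(A^{(N)},1))\geq 2k$ from Theorem~\ref{main4} (proved in Section~4 by Zhao's method) and then extracts the single missing bit of information --- the parity of the valuation --- from the Waldspurger identity $y_1^{2}=2\,L^{(alg)}(A^{(N)},1)$. This is a clean and conceptually transparent shortcut: it makes explicit that the parity rigidity encoded in Waldspurger's formula is exactly what upgrades the Zhao bound to the sharp one predicted by~\eqref{2sd}. There is no circularity, since Theorem~\ref{main4} is established independently of Section~5 and Proposition~\ref{s} is only invoked later, in the proof of Theorem~\ref{bw}. The trade-off is that the paper's proof is self-contained within Section~5 and yields, as a by-product, class-group information that resonates with the descent calculations of Section~3, whereas your argument is shorter but leans on the analytic machinery of Section~4.
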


\begin{proof}
For any positive integer $d$ dividing $N$, the ideal class group
$\CA=\CA(-7d)$ of the imaginary quadratic field $\BQ(\sqrt{-7d})$ always has
$4$ dividing $\# (2\CA)$. In fact, when $d$ is a prime, $\BQ(\sqrt{-7d})$ has
$8$-rank $1$ by Proposition \ref{s0}. When $d$ is not a prime, then
either $\BQ(\sqrt{-7d})$ has $4$-rank at least $2$ or $8$-rank
at least $1$. We only need to explain the latter case, say that
we have $r_{8}(-7d)=1$ with $r_{4}(-7d)=1$. From the linear equation $RX=0$, we conclude that
$[\sqrt{-7}]\in \CA[2]\cap2\CA$. We claim that $[\sqrt{-7}]$ is in $\CA[8]$.
According to the equivalent assertions in the proof of Proposition \ref{s0}, this will be true
if and only if the equation $z^{2}=7x^{2}+dy^{2}$, has a non-trivial
positive integer solution $(a,b,c)$, with the integer $c$ satisfying
$\left(\frac{c}{7}\right)=1$. This can be shown as follows.
Without loss of generality, we can assume $b,c$ are odd, and
$\left(\frac{c}{7}\right)=\left(\frac{c}{d}\right)$. We then conclude that,
when $d \equiv5 \mod 8$, we have
$$
db^{2}=c^{2}-7\cdot4{a'}^{2},
$$
where $a'=a/2$ is odd. Thus
$$
\left(\frac{c}{7}\right)=\left(\frac{c}{d}\right)=\left(\frac{7\cdot4}{d}\right)_{4}\cdot\left(\frac{a'}{d}\right)=\left(\frac{-7}{d}\right)_{4}=1.
$$
When $d \equiv 1 \mod 8$, then from
$$c^2=7a^2+db^2$$
we know $\left(\frac{a}{d}\right)=1$, so
$$\left(\frac{c}{d}\right)=\left(\frac{c^2}{d}\right)_4=\left(\frac{7}{d}\right)_4\left(\frac{a}{d}\right)=\left(\frac{-7}{d}\right)_4=1,$$
which proves our claim.

The proposition then follows by a similar
induction argument as in the proof of Proposition \ref{w}.
\end{proof}

In the final part of this section, we give an another proof of the result which is needed in the inductive argument with Heegner points in the last section of the paper. Note that we are able to prove a slightly stronger result than that given by Zhao's method in Theorem \ref{main4}. Recall that the field $\mathfrak H$ is defined by \eqref{t2}.

\begin{thm}\label{bw}
Let $M=q_{1}q_{2}\cdots q_{r}p_{1}p_{2}\cdots p_{k}$ be a
product of distinct primes, where $r \geq 0$ and $k \geq 1$. Assume that
\begin{enumerate}
  \item for $1 \leq i \leq r$, we have $q_{i}\equiv 1 \mod 4$, and $q_i$ is inert in the field $\BQ(\sqrt{-7})$,
  \item for $1 \leq j \leq k$, we have $p_{j}\equiv 1 \mod 4$, and $p_j$ splits completely in
  the field $\mathfrak H$.
\end{enumerate}
Then $ord_2(L^{(alg)}(A^{(M)},1)) \geq 2k+r+1$.
\end{thm}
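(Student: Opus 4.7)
The plan is to establish Theorem \ref{bw} by induction on $k$ and $r$, using the explicit Waldspurger formula (Theorem \ref{waldspurger}) as the analytic input and adapting the strategy from Theorem \ref{ord2LR+} and Proposition \ref{s}. Set $K=\BQ(\sqrt{-7M})$ and let $\varpi$ be a uniformizer at the unique prime of $K$ above $7$. For each positive divisor $d$ of $M$, let $\chi^{(d)}$ be the unramified quadratic character of $K$ corresponding to $K(\sqrt{d^{*}})/K$, and define $y_{d}=\sum_{t\in\CA}f(t)\chi^{(d)}(t)$, where $f$ is a primitive Gross--Prasad test vector for $(A,\chi^{(M)})$ chosen so that $\chi_{0}f=f$. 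Waldspurger's formula gives $|y_{d}|^{2}=2^{2+\delta}L^{(alg)}(A^{(d)},1)L^{(alg)}(A^{(M/d)},1)$.

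First I would sum $\sum_{d}\chi^{(d)}(t)$ over the divisors of $M$ whose number of prime factors has the parity dictated by $\chi^{(M)}(\varpi)$, obtaining an identity of the shape
\[
\sum_{d}y_{d}=2^{r+k+1}\sum_{t\in 2\CA/\langle\varpi\rangle}f(t),
\]
using invariance of $f$ under $\varpi$-multiplication. The symmetry $y_{d}=y_{M/d}$ of Lemma \ref{relation for yd} then lets me group the left-hand sum in pairs $\{d,M/d\}$. Running a double induction -- with base cases supplied by Theorem \ref{ord2LR+} (for $k=0$) and Proposition \ref{s} (for $r=0$) -- I would bound $ord_{2}(y_{d})$ for each proper divisor $d\neq M$ via Waldspurger's formula and the inductive hypothesis, show that the contribution of these terms has $2$-adic valuation at least $2k+r+1$, and conclude the same for $y_{M}$. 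A final application of Waldspurger's formula then converts this into the desired bound $ord_{2}(L^{(alg)}(A^{(M)},1))\geq 2k+r+1$.

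The critical step, and the source of the extra $+1$ over the bound of Theorem \ref{main4}, is showing that
\[
\sum_{t\in 2\CA/\langle\varpi\rangle}f(t)\equiv 0\pmod{2},
\]
which should reduce to a parity statement on $\#(2\CA/\langle\varpi\rangle)$, since $f$ takes values in $\{0,\pm 1\}$. Here the hypothesis that each $p_{j}$ splits completely in $\mathfrak{H}=\BQ(A[4],\sqrt{q_{1}},\ldots,\sqrt{q_{r}})$ is decisive: splitting in $\BQ(A[4])=\BQ(i,\sqrt[4]{-7})$ gives $\left(\frac{-7}{p_{j}}\right)_{4}=1$, so Proposition \ref{s0} shows that each $p_{j}$ individually contributes an $8$-rank element to the relevant class group, while splitting in $\BQ(\sqrt{q_{i}})$ supplies the Legendre-symbol relations $\left(\frac{p_{j}}{q_{i}}\right)=1$ needed to kill the corresponding entries of the R\'edei matrix for $\BQ(\sqrt{-7M})$. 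A R\'edei-matrix computation should then extract the extra factor of $2$ uniformly across residue classes.

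I expect the main obstacle to be the combined R\'edei-matrix analysis. Proposition \ref{s0} handles only a single prime $p$ via the explicit integer solutions to $z^{2}=px^{2}+7y^{2}$, whereas here the factorization $M=q_{1}\cdots q_{r}p_{1}\cdots p_{k}$ mixes inert and split primes and one must track their interactions simultaneously. Organizing the bookkeeping so that the extra factor of $2$ drops out uniformly for each combination of $(\tau(M)\bmod 2,\;\chi^{(M)}(\varpi))$ will require some care, but the splitting hypothesis in $\mathfrak{H}$ is tailored precisely to make every required Jacobi-symbol condition trivial.
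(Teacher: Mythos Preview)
Your approach is essentially the paper's: both set $K=\BQ(\sqrt{-7M})$, use the explicit Waldspurger formula together with a double induction on $r$ and $k$ (with base cases Theorem \ref{ord2LR+} and Proposition \ref{s}), and extract the extra $+1$ from a parity statement about $2\CA$ for $K$. You also correctly identify the mechanism: the splitting of each $p_j$ in $\mathfrak{H}$ kills the off-diagonal entries of the R\'edei matrix between the $q_i$'s and the $p_j$'s, so the matrix is block-diagonal and $4\mid\#(2\CA)$; this is exactly the observation the paper makes at the start of its proof.

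There is, however, a genuine gap in your uniform treatment. You invoke both the choice $\chi_0 f=f$ and the symmetry $y_d=y_{M/d}$ of Lemma \ref{relation for yd} in all cases, but these are only available when $M$ is a quadratic residue modulo $7$, i.e.\ when $r$ is even. In that case $K_7\cong\BQ_7(\sqrt{-7})$ and the primitive test vector is a translate of $f_0$. When $r$ is odd, $K_7\cong\BQ_7(\sqrt{-35})$, the test vector is a translate of $f_0\pm f_1$ (see the proof of Theorem \ref{test vector}), so $\chi_0 f\neq f$ and the hypothesis of Lemma \ref{relation for yd} fails. Your $\varpi$-invariance device, borrowed from Proposition \ref{w}, likewise relies on the test vector being $f_0$. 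The paper handles this by an explicit case split on the parity of $r$: for $r$ odd one works directly with
\[
\sum_{d\mid M,\ \tau((d,R_+))\ \mathrm{odd}} y_d \;=\; 2^{r+k}\sum_{t\in 2\CA} f(t),
\]
using that $f$ is $\pm 1$-valued on $2\CA$ and $\#(2\CA)$ is even, while for $r$ even one invokes $y_d=y_{M/d}$ to halve the left-hand sum before running the induction. Once you insert this case distinction, your outline coincides with the paper's proof.
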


\begin{proof}
Put $R_{+}=q_{1}q_{2}\cdots q_{r}$, $N_{+}=p_{1}p_{2}\cdots p_{k}$,
and take $K=\BQ(\sqrt{-7M})$. We shall assume $r \geq 1$, since the
case when $r=0$ has been considered in the previous proposition. We first note that,
under the conditions of the theorem, $4$ divides $\#(2\CA)$, where
$\CA$ now denotes the ideal class group of $K=\BQ(\sqrt{-7M})$. This is because when we look at the R\'{e}dei matrix, the conditions that we put on $p_j$'s force this matrix be a block matrix with respect to the $q_i$ and $p_j$'s.

\medskip

In the following argument, we primarily use induction on $r$, and for each fixed $r$, we also use induction on $k$.
We first assume $r=1$. Then $K=\BQ(\sqrt{-7q_1N_{+}})$, and we see that
$K_{7}\cong\BQ_{7}(\sqrt{-35}), \chi^{(q_1N_{+})}(\varpi)=-1$. Take the primitive Gross-Prasad test vector $f$ for
$(A,\chi^{(qN_{+})})$ according to Theorem \ref{test vector}. We now use induction on $k$, starting with $k=1$. We then have
$$
y_{p_1q_1}+y_{q_1}=\sum_{d\mid p_1q_1}y_d=2^2\sum_{t\in 2\CA}f(t).
$$
In the above formula, the terms $y_{p_1}$ and $y_1$ vanish, because of our choice of the test vector. By Theorem \ref{ord2LR+} and Proposition \ref{s}, we have $y^2_{q_1}=8L^{(alg)}(A^{(q_1)},1)L^{(alg)}(A^{(p_1)},1)$, whence $ord_2(y_{q_1})\geq 3$. Note that $f(t)$ take value $\pm1$ on $2\CA$ and $\#(2\CA)$ is even, and thus the right hand side of the above formula has $2$-adic valuation at least $3$. It follows that $ord_2(y_{p_1q_1})\geq3$. Then by $y^2_{p_1q_1}=8L^{(alg)}(A^{(p_1q_1)},1)L^{(alg)}(A,1)$, we deduce that $ord_2(L^{(alg)}(A^{(p_1q_1)},1))\geq 4$, which proves our assertion for $k=1$.
For
$k \geq 2$, we have
$$
\sum_{d\mid N_{+}}y_{q_1d}=\sum_{d\mid qN_{+}}y_{d}=2^{k+1}\sum_{t\in 2\CA}f(t).
$$
We can rewrite the left term as
$$
y_{q_1N_{+}}+y_{q}+\sum_{d\mid N_{+} \ d\neq 1,N_{+}}y_{dq_1}=2^{k+1}\sum_{t\in 2\CA}f(t).
$$
By the induction hypothesis, Theorem \ref{ord2LR+} and
Proposition \ref{s}, we know that for $d\neq1,N_{+}$, we have
$$
y^{2}_{dq_1}=8L^{(alg)}(A^{(dq_1)},1)L^{(alg)}(A^{(N_{+}/d)},1),
$$
whence $ord_2(y_{d})\geq k+3$. We also have
$$
y^{2}_{q_1}=8L^{(alg)}(A^{(q_1)},1)L^{(alg)}(A^{(N_{+})},1),
$$
which gives $ord_2(y_{q_1})\geq k+2$. Noting that $ord_2(2^{k+1}\sum_{t\in 2\CA}f(t)) \geq k+2$,
we conclude that $ord_2(y_{q_1N_{+}})\geq k+2$. But by Waldspurger's formula, we have
$$
y^{2}_{dN_{+}}=8L^{(alg)}(A^{(q_1N_{+})},1)L^{(alg)}(A,1),
$$
and so we obtain $ord_2(L^{(alg)}(E^{(q_1N_{+})},1))\geq 2k+2$.
This proves the case for $r=1$.

\medskip

Now we assume $r > 1$, and we break up the argument into two cases,
according that $r$ is even or odd. Suppose first that $r$ is odd,
so that $r \geq 3$. In this case, $K_{7}\cong\BQ_{7}(\sqrt{-35}), \chi^{(M)}(\varpi)=-1$, and we can take the
test vector $f$ for $(A,\chi^{(M)})$ according to Theorem \ref{test vector}. We use
induction on $k$. Then we have
$$
\sum_{1\leq d \mid M,\tau((d,R_{+})) \ \textrm{odd}}y_{d}=\sum_{1\leq d\mid M}y_{d}=2^{r+k}\sum_{t\in 2\CA}f(t).
$$
We analyse all the terms except $y_{M}$ in what follows.
Firstly,
$$
y^{2}_{s}=8L^{(alg)}(A^{(s)},1)L^{(alg)}(A^{(\frac{R_{+}}{s}\cdot N_{+})},1), \ \forall s\mid R_{+}, \tau(s) \  \textrm{odd},
$$
gives $ord_2(y_{s})\geq k+\frac{r+3}{2}$,
while
$$
y^{2}_{st}=8L^{(alg)}(A^{(st)},1)L^{(alg)}(A^{(\frac{R_{+}}{s}\cdot \frac{N_{+}}{t})},1), \ \forall s\mid R_{+}, \tau(s) \  \textrm{odd}, 1\neq t\mid N_{+},st\neq M,
$$
gives $ord_2(y_{st})\geq k+\frac{r+5}{2}$.
Note also that $ord_2(2^{r+k}\sum_{t\in 2\CA}f(t))\geq r+k+1$.
Putting these results together, we obtain
$ord_2(y_{M})\geq k+\frac{r+3}{2}$, whence Waldspurger's
formula gives $ord_2(L^{(alg)}(A^{(M)},1))\geq 2k+r+1$, as required.

Assume next that $r$ is even, so that $r\geq2$. Then $K_{7}\cong\BQ_{7}(\sqrt{-7}), \chi^{(M)}(\varpi)=1$, and we again
take the primitive test vector $f$ for $(A,\chi^{(M)})$ according to Theorem \ref{test vector}. We again apply induction on $k$. Now we have
$$
\sum_{1\leq d\mid M,\  \tau((d,R_{+})) \ \textrm{even}}y_{d}=\sum_{1\leq d\mid M}y_{d}=2^{r+k}\sum_{t\in 2\CA}f(t).
$$
Note that, in this case, from Lemma \ref{relation for yd} we have $y_{d}=y_{n/d}$, and so the above summation can be rewritten as
$$\sum_{1\leq d\mid M,\  \tau((d,R_{+})) \ \textrm{even}, \  d\geq\sqrt{R_M}}y_{d}=2^{r+k-1}\sum_{t\in2\CA}f(t).$$
We again analyse all the terms in above formula except $y_{M}$.
Firstly,
$$
y^{2}_{s}=4L^{(alg)}(A^{(s)},1)L^{(alg)}(A^{(\frac{R_{+}}{s}\cdot N_{+})},1), \ \forall s\mid R_{+}, \tau(s) \  \textrm{even},
$$
gives $ord_2(y_{s})\geq k+\frac{r}{2}+1$, while
$$
y^{2}_{st}=4L^{(alg)}(A^{(st)},1)L^{(alg)}(A^{(\frac{R_{+}}{s}\cdot \frac{N_{+}}{t})},1), \ \forall s\mid R_{+}, \tau(s) \  \textrm{even},
1\neq t\mid N_{+},st\neq M,
$$
gives $ord_2(y_{st})\geq k+\frac{r}{2}+2$.
Note also $ord_2(2^{r+k-1}\sum_{t\in 2\CA}f(t))\geq r+k$.
Putting all together, we obtain
$ord_2(y_{M})\geq k+\frac{r}{2}+1$,
whence, using Waldspurger's formula, we conclude that
$ord_2(L^{(alg)}(A^{(M)},1))\geq 2k+r+1$.
This completes the proof.
\end{proof}

\section{Heegner Points for Quadratic Twists}
In this section, we show that one can combine the induction method as in \cite{Tian2},
\cite{Tian}, and the method in section 2, to give a fairly general result about the existence of many
quadratic twists of the elliptic curve $A=(X_0(49), [\infty])$ with Heegner points of infinite order. The following theorem is just a restatement of Theorem \ref{main2}.
We recall that $F=\BQ(\sqrt{-7})$.

\begin{thm}\label{mainf} Let $M=-\ell_0 RN$ be a negative square-free
integer, prime to $7$, such that
\begin{enumerate}
\item $\ell_0 > 3$ is a prime which is $\equiv 3\mod 4$ and is inert in $F$,
\item $R$ is a product of primes which are $\equiv 1 \mod 4$, and which are inert both in
$F$ and in $\BQ(\sqrt{-\ell_0})$,
\item $N$ is a product of primes which split completely in $\BQ(A[4])$, and in the fields
$\BQ(\sqrt{q})$, for $q$ running over all primes dividing $R$,
\item the ideal class group of $K_N=\BQ(\sqrt{-\ell_0 N})$ has no element of order
$4$.
\end{enumerate}
Then $L(A^{(M)}, s)$ has a simple zero at $s=1$, $A^{(M)}(\BQ)$ has rank $1$, and the Tate
Shafarevich group of $A^{(M)}(\BQ)$ is finite of odd cardinality.
\end{thm}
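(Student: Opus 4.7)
The plan is to combine a Heegner point construction over $K := \BQ(\sqrt{-\ell_0 N})$ with the $2$-descent of Corollary \ref{descent7}, the non-vanishing Theorem \ref{main3}, and Kolyvagin--Gross--Zagier. First I would verify the Heegner hypothesis for $A = X_0(49)$ over $K$: since $\ell_0$ is inert in $F = \BQ(\sqrt{-7})$ one has $-\ell_0$ a square mod $7$; since each $p_j$ splits in $F$, $N$ is a square mod $7$; hence $-\ell_0 N$ is a square mod $7$ and the prime $7$ splits in $K$. Note also $K \neq \BQ(i), \BQ(\sqrt{-3})$ because $\ell_0 > 3$, and $A(K)[2^\infty] = A(\BQ)[2]$, since $\BQ(A[2^\infty])$ is unramified outside $\{2,7\}$ while $K/\BQ$ ramifies at $\ell_0$ and the $p_j$.

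Writing $R' = q_1\cdots q_r$, form the Heegner point $P_{R'} \in X_0(49)(H_{R'})$ of conductor $R'$, and let $\chi_{R'}$ be the ring class character of $K$ corresponding to $K(\sqrt{R'})/K$ (a subfield of $H_{R'}$ by the argument of Lemma \ref{sub}, using that each $q_i \equiv 1 \bmod 4$ is inert in $K$). Define
$$
y_{R'} = \sum_{\sigma \in \Gal(H_{R'}/K)} \chi_{R'}(\sigma) f(P_{R'})^\sigma \in A(K(\sqrt{R'})).
$$
A complex-conjugation calculation identical to the one in the proof of Theorem \ref{Birch2} places $y_{R'}$ in the eigenspace of $A(K(\sqrt{R'})) \otimes \BQ$ on which both the non-trivial element of $\Gal(K(\sqrt{R'})/K)$ and complex conjugation act by $-1$; under the eigenspace decomposition for $\Gal(K(\sqrt{R'})/\BQ) \cong (\BZ/2\BZ)^2$ this component corresponds to $A^{(M)}(\BQ) \otimes \BQ$, since $M = -\ell_0 R' N$ is the ``glue'' discriminant of the three quadratic subfields. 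The heart of the proof is then to show $y_{R'}$ is of infinite order by induction on $r$, extending Theorem \ref{Birch2}; the CM property of $A$ gives $a_{q_i} = 0$ for each $q_i$ (inert in $F$ with $q_i \geq 5$), so Kolyvagin's trace relation $\mathrm{Tr}_{H_{Mq_i}/H_M} f(P_{Mq_i}) = a_{q_i} f(P_M)$ vanishes identically, and the $2$-adic inductive computations of Theorem \ref{Birch2} carry over with only notational changes.

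Assuming this, the remainder is formal. Gross--Zagier (Theorem \ref{M5}) gives $L'(A/K, \chi_{R'}, 1) \neq 0$; combining with $L(A^{(R')}, 1) \neq 0$ from Theorem \ref{main3} and the factorization $L(A/K, \chi_{R'}, s) = L(A^{(R')}, s) L(A^{(M)}, s)$ forces $L(A^{(M)}, s)$ to have a simple zero at $s = 1$. Kolyvagin then gives $\mathrm{rank}\, A^{(M)}(\BQ) = 1$ and $\Sha(A^{(M)})$ finite. For oddness of $\Sha$: by Corollary \ref{descent7} one has $|\mathfrak{S}^{(2)}(A^{(M)})| = 2$; combined with rank $1$ and $|A^{(M)}(\BQ)[2]| = 2$, the descent exact sequence forces $\Sha(A^{(M)})[2] = 0$, whence $\Sha(A^{(M)})[2^\infty] = 0$, so $|\Sha(A^{(M)})|$ is odd. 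The main obstacle is the base case $r = 0$ of the induction: Birch's lemma (Theorem \ref{bl}) uses oddness of $h_K$ to conclude $y_1 = \mathrm{Tr}_{H_1/K}f(P_1)$ is of infinite order, but for $K = \BQ(\sqrt{-\ell_0 N})$ the class number $h_K$ need not be odd; the hypothesis that the class group of $K$ has no element of exact order $4$ must substitute for this oddness, presumably via the sharper Selmer-theoretic information furnished by Corollary \ref{descent7}, which is exactly the point where that class group hypothesis enters the statement of the theorem.
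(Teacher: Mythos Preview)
Your setup is correct: the Heegner hypothesis for $K=K_N=\BQ(\sqrt{-\ell_0 N})$, the construction of $y_{R}$ twisted by $\chi_R$, the factorization $L(A/K,\chi_R,s)=L(A^{(R)},s)L(A^{(M)},s)$ together with Theorem~\ref{main3}, and the appeal to Corollary~\ref{descent7} for the oddness of $\Sha$ are all exactly as in the paper. The gap is the induction itself.

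You propose to induct on $r$, the number of primes in $R$, and you correctly identify that the base case $r=0$ collapses to Birch's lemma over $K_N$, where $h_{K_N}$ is \emph{not} odd once $N>1$. This is not a minor obstacle that Corollary~\ref{descent7} resolves by itself; the argument of Theorem~\ref{Birch2} genuinely requires $[H_R:\mathfrak H_R]$ odd, which fails here. The paper avoids this by inducting on $k$, the number of primes in $N$, instead of on $r$. The base case $k=0$ is then literally Theorem~\ref{Birch2} over $\BQ(\sqrt{-\ell_0})$, where the class number \emph{is} odd. For the inductive step one writes, over $K_N$, the identity
\[
Y_{R,N}=-\sum_{1<d\mid N} Z_{dR,N}+2^{k+r}\Psi_{R,N},
\]
and the essential new ingredient (absent from your outline) is a \emph{height comparison across different imaginary quadratic fields}: for each $d\mid N$ with $d>1$ one compares $Z_{dR,N}\in A(\BQ(\sqrt{-\ell_0 R N/d}))^-$ with the Heegner point $Y_{R,N/d}$ constructed over the smaller field $K_{N/d}=\BQ(\sqrt{-\ell_0 N/d})$. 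Gross--Zagier gives
\[
\frac{\wh h_{K_N}(Z_{dR,N})}{\wh h_{K_{N/d}}(Y_{R,N/d})}=\frac{L^{(\alg)}(A^{(dR)},1)}{L^{(\alg)}(A^{(R)},1)},
\]
and it is precisely Theorem~\ref{main4} (equivalently Theorem~\ref{ii} plus the lower bound $\ord_2 L^{(\alg)}(A^{(dR)},1)\geq r+2k(d)$) that forces this ratio to lie in $2^{2k(d)+1}\BZ_2$. Combined with the inductive hypothesis $Y_{R,N/d}\in 2^{k-k(d)+r-1}W_d+\mathrm{tors}$, this pushes each $Z_{dR,N}$ into $2^{k+r}W_d+\mathrm{tors}$, so the sum is absorbed and the identity above gives the required $2$-divisibility of $Y_{R,N}$. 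The class-group hypothesis (no element of order $4$ in $\mathrm{Cl}(K_N)$) enters only at the last step, ensuring $[H_{R,N}:\mathfrak J_{R,N}]$ is odd so that $\Psi_{R,N}+\ov{\Psi}_{R,N}=T\neq 0$, exactly as in Theorem~\ref{Birch2}. In short: you are missing both the correct induction variable and the role of Theorem~\ref{main4} in the height comparison that drives it.
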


We prove Theorem \ref{mainf} by constructing non-trivial Heegner
points as follows. Assume that $M=-\ell_0RN$ is an integer
satisfying the conditions (1), (2), and (3) of Theorem \ref{mainf}. Note that
$\BQ(A[4])=\BQ(i, \sqrt[4]{-7})$, and that $7$, the unique prime dividing the
conductor of $A$, is split in $K_N=\BQ(\sqrt{-\ell_0 N})$. Thus the
classical Heegner hypothesis holds for $A$ and the imaginary quadratic field $K_N$. Let $H_{R,N}$ be the
ring class field of $K_N$ of conductor $R$, and define $\mathfrak J_{R,N}$ by
$$
\mathfrak J_{R,N} = K_N(\sqrt{-\ell_0}, \sqrt{q_1}, \ldots, \sqrt{q_r}, \sqrt{p_1}, \ldots, \sqrt{p_k}),
$$
where the $q_i$ (resp. the $p_j$) run over the distinct prime factors of $R$ (resp. $N$). By the theory of genera, and similar arguments
to those given in the proof of Lemma \ref{sub}, we see that $\mathfrak J_{R,N}$ is a subfield of $H_{R,N}$. In particular, it follows that
$K_N(\sqrt{RN})\subset H_{R,N}$. Let $P_{R,N} \in A(H_{R,N})$ be the Heegner point of
conductor $R$ attached to $A=X_0(49)$ and the imaginary quadratic field $K_N$ in exactly the same fashion as in section 2, and let $\chi_{R}$ be the quadratic character defining the extension $K_N(\sqrt{R})/K_N$. We define the Heegner point
$$
Y_{R,N}=\sum_{\sigma\in \Gal(H_{R,N}/K)} \chi_R(\sigma)\sigma(P_{R,N})\in A(K_N(\sqrt{R})).
$$
Theorem \ref{mainf} then follows from Theorem \ref{M5} and the following result.

\medskip

\begin{thm}\label{mainf1} Let $M=-\ell_0 RN$ be a negative square-free
integer, prime to $7$, satisfying conditions (1), (2), and (3) of Theorem \ref{mainf}.
Then the Heegner point $Y_{R,N} \in A(K_N(\sqrt{R}))$ satisfies
$$Y_{R,N}\in 2^{k+r-1 }A(\BQ(\sqrt{M}))^-+A(\BQ(\sqrt{M}))_\tor,$$
where $r$ and $k$ denote the number of prime factors of $R$ and $N$, respectively, and, when $r=k=0$, this should be interpreted as meaning that $2Y_{R,N}\in A(\BQ(\sqrt{M}))^-$. If, in addition, the ideal class group of $K_N$ has no element of order $4$,
then $$Y_{R,N}\notin 2^{k+r}A(\BQ(\sqrt{M}))^-+A(\BQ(\sqrt{M}))_\tor,$$
whence $Y_R$ is of infinite order.
\end{thm}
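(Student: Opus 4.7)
My plan is to generalize the Heegner-point argument of Theorem \ref{Birch2}, replacing $\BQ(\sqrt{-\ell_0})$ by the imaginary quadratic field $K_N=\BQ(\sqrt{-\ell_0 N})$ and the ring-class field $H_R$ by $H_{R,N}$, and then to extract an additional $2^k$ in the divisibility from the primes dividing $N$. First I will do the preparatory verifications modelled on Lemma \ref{sub}: that $\mathfrak J_{R,N}$ is a subfield of $H_{R,N}$ of odd index (each $q_i$ is inert in $K_N$ with $q_i\equiv 1\bmod 4$, and each $p_j$ ramifies in $K_N$, so $K_N(\sqrt{p_j})$ is the unique quadratic subextension of $H_{p_j}/K_N$); that $A(\mathfrak J_{R,N})[2^\infty]=A(\BQ)[2]$, because every quadratic subfield of $\mathfrak J_{R,N}/\BQ$ is ramified at some odd prime of $\{\ell_0, q_i, p_j\}$ while $\BQ(A[2^\infty])$ is ramified only above $\{2,7\}$; and that, after replacing $f$ by an odd multiple, $T=f([0])$ has exact order $2$ and $f(P^{w_{49}})+f(P)=T$ (since $L(A,s)$ has root number $+1$).

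With these in hand, Kolyvagin's trace identity at each $q_i$ (which is supersingular for $A$, so $a_{q_i}=0$) allows me to repeat the Galois computation of Theorem \ref{Birch2} almost verbatim: putting $\psi_{R,N}=\mathrm{Tr}_{H_{R,N}/\mathfrak J_{R,N}} f(P_{R,N})$ and $z_D=b_D y_D$ with $b_D=\prod_{q\mid R/D} a_q$, one obtains
\[
Y_{R,N}+\sum_{D\mid R,\,D\neq R} z_D = 2^r \psi_{R,N},
\]
and an inflation-restriction argument on $\mathfrak J_{R,N}/\BQ(\sqrt{M})$ then places $Y_{R,N}$ in $2^{r-1}A(\BQ(\sqrt{M}))^-+A(\BQ(\sqrt{M}))_{\tor}$. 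This settles the case $k=0$.

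The main obstacle is upgrading this by the further factor $2^k$, which I plan to attack by induction on $k$, peeling off one prime $p=p_j$ at a time and comparing $Y_{R,N}$ with $Y_{R,N'}$ for $N'=N/p$. The hypothesis that each $p_j$ splits completely in $\mathfrak H=\BQ(A[4],\sqrt{q_1},\ldots,\sqrt{q_r})$ should enter essentially here: it guarantees that the local Artin symbol at $p$ acts trivially on $A[4]$ and fixes each $\sqrt{q_i}$, which ought to convert a mod-$2$ congruence between the old and new Heegner points (arising from the Shimura-reciprocity behaviour of $P_{R,N}$ as the CM order passes from $\CO_{K_{N'}}$ to $\CO_{K_N}$) into a mod-$4$ congruence, gaining one factor of $2$ per inductive step. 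Making this bootstrap entirely algebraic, without using any unproven $2$-part of BSD, is the hardest piece; I expect it to require Kolyvagin's derivative classes at auxiliary Kolyvagin primes, coupled with a careful local analysis at each $p_j$.

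For the non-divisibility half of the statement, the class-group hypothesis combined with Corollary \ref{descent7} forces $\#\mathfrak S^{(2)}(A^{(M)})=2$. Together with Kolyvagin's bound on $\Sha(A^{(M)})[2^\infty]$ in terms of $Y_{R,N}$ and the parity theorem of the Dokchitser brothers, this will show that $A^{(M)}(\BQ)$ has free rank $1$ and that a generator $Z$ of $A(\BQ(\sqrt{M}))^-$ modulo torsion maps to the non-trivial element of $\mathfrak S^{(2)}(A^{(M)})$. Writing $Y_{R,N}=nZ+\text{torsion}$, if $Y_{R,N}$ lay in $2^{r+k}A(\BQ(\sqrt{M}))^-+A(\BQ(\sqrt{M}))_{\tor}$ then $n/2^{r+k-1}$ would be even, making $Z$ itself $2$-divisible in the minus part and contradicting the non-triviality of its Selmer class. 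The analytic lower bound of Theorem \ref{bw} would enter as a sanity check, certifying that $Y_{R,N}$ is indeed non-torsion and that the algebraic bound matches the $2$-adic valuation predicted by the explicit Gross-Zagier formula of Theorem \ref{M5} applied to $L'(A/K_N,\chi_R,1)=L'(A^{(M)},1)\cdot L(A^{(R)},1)$.
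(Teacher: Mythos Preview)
Your preparatory work and the $k=0$ base case are fine and match the paper's argument. But the heart of the proof is the inductive step in $k$, and here your plan diverges from the paper in a way that leaves a genuine gap.

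You propose to gain the extra factor $2^k$ by an ``entirely algebraic'' comparison of $Y_{R,N}$ with $Y_{R,N/p}$, hoping that Shimura reciprocity and the splitting of $p$ in $\BQ(A[4],\sqrt{q_1},\ldots,\sqrt{q_r})$ will upgrade a mod-$2$ congruence to a mod-$4$ one. The difficulty is that these two Heegner points are built from \emph{different} imaginary quadratic fields $K_N$ and $K_{N/p}$, so there is no direct Galois or Kolyvagin-trace relation between them; the Kolyvagin relation $\mathrm{Tr}_{H_{Mq}/H_M}f(P_{Mq})=a_q f(P_M)$ only compares orders inside a \emph{fixed} $K$. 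There is no known purely algebraic bootstrap of the kind you describe, and nothing in the paper's toolbox supplies one.

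What the paper actually does is compare \emph{heights}, not points. For each divisor $d>1$ of $N$, both the imprimitive point $Z_{dR,N}$ (built over $K_N$) and the primitive point $Y_{R,N_d}$ (built over $K_{N_d}$, with $N_d=N/d$) lie in the same rank-one group $W_d=A(\BQ(\sqrt{-\ell_0 R N_d}))^-$. The explicit Gross--Zagier formula (Theorem~\ref{M5}) gives
\[
\frac{\widehat h_{K_N}(Z_{dR,N})}{\widehat h_{K_{N_d}}(Y_{R,N_d})}
=\frac{L^{(alg)}(A^{(dR)},1)}{L^{(alg)}(A^{(R)},1)},
\]
and the $L$-value results of \S4--5 (Theorem~\ref{ii} and Theorem~\ref{main4}) give $\mathrm{ord}_2$ of the right side $\geq 2k(d)+1$. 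Since $W_d$ has rank one, this height ratio forces $Z_{dR,N}\in 2^{k+r}W_d+W_{d,\mathrm{tor}}$, after which the identity $\sum_{R\mid D\mid RN}Z_{D,N}=2^{k+r}\Psi_{R,N}$ and the inflation--restriction descent finish the job. So the $L$-value bounds are not a sanity check here; they are the mechanism that produces the extra $2^k$.

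Two smaller points. First, your claim that $[H_{R,N}:\mathfrak J_{R,N}]$ is always odd is incorrect: the class number of $K_N$ is even once $k\geq 1$, and the index is odd precisely when condition~(4) holds (no element of order $4$ in the class group of $K_N$). Second, your non-divisibility argument via Corollary~\ref{descent7} and Kolyvagin's $\Sha$-bound is circular, since Kolyvagin's theorem requires $Y_{R,N}$ to be non-torsion, which is exactly what you are trying to prove. The paper instead uses condition~(4) to get $\Psi_{R,N}+\overline{\Psi}_{R,N}=T$ directly (odd index), and then runs the same contradiction as in Theorem~\ref{Birch2}.
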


\begin{proof} We argue by induction on $k$, the number of prime factors of $N$. The initial case with $k=0$
is given by Birch's lemma when $r=0$, and by Theorem \ref{Birch2} when $r \geq 1$ (note that any prime dividing
$R$ will be a sensitive supersingular prime for $A$, which is inert in $\BQ(\sqrt{-\ell_0})$). Now
assume that $k\geq 1$. For each positive divisor $D$ of $NR$,
let $\chi_D$ be the character over $K_N$ defining the extension
$K_N(\sqrt{D})$ and consider the associated imprimitive Heegner point in $A(K_N(\sqrt{D})$ given by
$$Z_{D,N}=\sum_{\sigma\in \Gal(H_{R,N}/K)} \chi_D(\sigma)\sigma(P_{R,N}).$$
Thus $Y_{R,N}= Z_{R,N}$. We also define
$$
\Psi_{R,N} = Tr_{H_{R,N}/\mathfrak J_{R,N}}(P_{R,N}).
$$
\begin{lem} Let $D$ be any positive divisor of $RN$. Then $Z_{D,N}=0$ unless $R$ divides $D$.
Moreover, we have
\begin{equation}\label{f1}
\sum_{R|D|RN} Z_{D,N}=2^{k+r}\Psi_{R,N},
\end{equation}
where the sum is taken over all positive divisors $D$ of $RN$ which are divisible  by $R$.
\end{lem}
\begin{proof}
Let $D$ be any positive divisor of $RN$, which is not divisible by $R$. Thus there exists a prime
$q$ dividing $R$, which does not divide $D$. Put $R' = R/q$. Then, by the Kolyvagin property for Heegner points, we have
$$
Tr_{H_{R,N}/H_{R',N}}(P_{R,N}) = a_qP_{R',N} = 0,
$$
because $a_q = 0$. Hence, since $D$ divides $R'$, we have
$$
Z_{D,N}=\sum_{\sigma \in \Gal(H_{R',N}/K)} \chi_D(\sigma)\sigma(\RTr_{H_{R,N}/H_{R',N}}(P_{R,N})) = 0.
$$
Thus the sum on the left hand side of \eqref{f1} is given by
$$
\sum_{D|RN}Z_{D,N}=\sum_{\sigma\in \Gal(\mathfrak J_{R,N}/K)}\left(\sum_{
D|RN}\chi_D(\sigma)\right)\sigma(\Psi_{R,N})=2^{k+r} \Psi_{R,N}.
$$
\end{proof}
Since $Y_{R,N} = Z_{R,N}$, we conclude from this lemma that
\begin{equation}\label{f6}
Y_{R,N}=-\sum_{1 < d|N} Z_{dR, N} +2^{k+r} \Psi_{R,N},
\end{equation}
where $d$ runs over the divisors of $N$ which are $>1$.
For each such $d$,  it is not difficult to see that  $Z_{dR, N}\in
A(\BQ(\sqrt{-\ell_0RN_d}))^-$, where $N_d = N/d$. Our strategy is to use an induction argument to prove that
\begin{equation}\label{f2}
Z_{dR,N}\in 2^{k+r} A(\BQ(\sqrt{-\ell_0RN_d}))^-+ A(\BQ(\sqrt{-\ell_0RN_d}))_\tor
\end{equation}
for all divisors $d > 1$ of $N$.
Let $K_{N_d}=\BQ(\sqrt{-\ell_0
N_d})$ and construct the analogous point $Y_{R, N_d}$,
with $N$ replaced by its divisor $N_d$, but keeping the same value of $R$. Now both $Z_{dR,N}$ and $Y_{R,N_d}$ belong to $W_d = A(\BQ(\sqrt{-\ell_0RN_d}))^-$, and we can compare their heights by using Theorem \ref{M5} and Kolyvagin's theorem. Note that
\begin{equation}\label{f3}
L(A/K_{N}, \chi_{dR}, s)/L(A/K_{N_d},\chi_R,s) = L(A^{(dR)},s)/L(A^{(R)},s).
\end{equation}
Since $L(A^{(R)},s)$ does not vanish at $s=1$ by Theorem \ref{main3}, it follows from Theorem \ref{M5} that $Z_{dR,N}$ has finite order if $Y_{R, N_d}$ has finite order, and then
\eqref{f2} is plainly true. On the other hand, if $Y_{R, N_d}$ is of infinite order, and $Z_{dR,N}$ is not torsion, Theorem \ref{M5} implies that $W_d$ has rank 1, and that
\begin{equation}\label{f4}
\frac{\wh{h}_{K_N}(Z_{dR, N})}{\wh{h}_{K_{N_d}}(Y_{R, N_d})}=\frac{L^{(alg)}(
A^{(dR)}, 1)}{L^{(alg)}(A^{(R)},1)},
\end{equation}
with $L(A^{(dR)}, 1) \neq 0$. By the strong form of Theorem \ref{main3} given by Theorem \ref{ii}, and Theorem \ref{main4}, we conclude that
\begin{equation}\label{f5}
ord_2(\frac{L^{(alg)}(A^{(dR)}, 1)}{L^{(alg)}(A^{(R)},1)}) \geq 2k(d) +1,
\end{equation}
where $k(d)$ denotes the number of prime factors of $d$. Now, since $d > 1$, we may assume by induction on $k$ that $Y_{R, N_d}$ belongs to $2^{k-k(d)-1}W_d + A(\BQ(\sqrt{-\ell_0RN_d}))_\tor$, and so we conclude from \eqref{f4} and \eqref{f5} that \eqref{f2} is indeed valid for all divisors $d > 1$ of $N$.
Hence we deduce from \eqref{f6} that
$$
Y_{R,N} \in 2^{k+r} A(\mathfrak J_{R,N}) + A(\mathfrak J_{R,N})_\tor.
$$
Similar arguments with Galois cohomology to those given in the proof of Theorem \ref{Birch2} then show that
$$
Y_{R,N}\in 2^{k+r-1} A(\BQ(\sqrt{M}))^-+A(\BQ(\sqrt{M}))_\tor.
$$
Finally, if $K_N=\BQ(\sqrt{-\ell_0 N})$ has no elements of order
$4$ in its ideal class group, then the degree $[H_{R,N} : \mathfrak J_{R,N}]$ is odd and we have that
$\Psi_{R,N}+\ov{\Psi}_{R,N}=T$ the non-trivial  point of order $2$ in $A(\BQ)$. An entirely
similar argument to that given in the proof of Theorem \ref{Birch2} then shows that
$$Y_{R,N} \notin 2^{k+r}A(\BQ(\sqrt{M}))^-+A(\BQ(\sqrt{M}))_\tor.$$
This completes the proof of the theorem.

\end{proof}

\bigskip

\noindent John Coates,\\
Emmanuel College, Cambridge, and\\
Department of Mathematics, POSTECH, Korea.\\
{\it jhc13@dpmms.cam.ac.uk }

\bigskip

\noindent Yongxiong Li, \\
Academy of Mathematics and Systems Science, \\
Morningside center of Mathematics, \\
Chinese Academy of Sciences, Beijing 100190, China. \\
{\it liyx\_1029@126.com }

\bigskip

\noindent Ye Tian, \\
Academy of Mathematics and Systems Science, \\
Morningside center of Mathematics, \\
Chinese Academy of Sciences, Beijing 100190, China. \\
{\it ytian@math.ac.cn }

\bigskip

\noindent Shuai Zhai, \\
School of Mathematics, Shandong University, \\
Jinan, Shandong 250100, China, and \\
Department of Pure Mathematics and Mathematical Statistics, \\
University of Cambridge, UK. \\
{\it shuaizhai@gmail.com}

\end{document}